\theoremstyle{plain}
\newcommand{\bA}{{\mathbb A}}
\newcommand{\bF}{{\mathbb F}}
\newcommand{\bG}{{\mathbb G}}
\newcommand{\bQ}{{\mathbb Q}}
\newcommand{\bZ}{{\mathbb Z}}
\newcommand{\cB}{{\mathscr B}}
\newcommand{\cC}{{\mathscr C}}
\newcommand{\caO}{{\mathcal O}}
\newcommand{\fA}{{\mathfrak A}}
\newcommand{\fI}{{\mathfrak I}}
\newcommand{\fJ}{{\mathfrak J}}
\newcommand{\fM}{{\mathfrak M}}
\newcommand{\fd}{{\mathfrak d}}
\newcommand{\fp}{{\mathfrak p}}
\DeclareMathOperator{\GL}{GL}
\newcommand{\tr}{{\rm tr}}
\newcommand{\matzz}[4]{\left(
\begin{array}{cc} #1 & #2 \\ #3 & #4 \end{array} \right)}
\DeclareMathOperator{\Gal}{Gal}
\newcommand\reallywidehat[1]{%
\savestack{\tmpbox}{\stretchto{%
  \scaleto{%
    \scalerel*[\widthof{\ensuremath{#1}}]{\kern-.6pt\bigwedge\kern-.6pt}%
    {\rule[-\textheight/2]{1ex}{\textheight}}
  }{\textheight}%
}{0.5ex}}%
\stackon[1pt]{#1}{\tmpbox}%
}
\newtheorem*{rep@theorem}{\rep@title}
\newcommand{\newreptheorem}[2]{%
\newenvironment{rep#1}[1]{%
 \def\rep@title{#2 \ref{##1}}%
 \begin{rep@theorem}}%
 {\end{rep@theorem}}}
\newtheorem{thm}{Theorem}[section]
\newtheorem{prop}[thm]{Proposition}
\newtheorem{cor}[thm]{Corollary}
\newtheorem{lm}[thm]{Lemma}
\theoremstyle{definition}
\newtheorem{Def}[thm]{Definition}
\newtheorem{quest}[thm]{Question}
\newtheorem{rem}[thm]{Remark}
\newenvironment{pro*}[1][Proof]{{\it{#1:}} }{}
\newcommand\cInd{\mathop{ \rm c-Ind}}
\newcommand\rar{ \rightarrow }
\newcommand\tar{ \twoheadrightarrow }
\newcommand\har{ \hookrightarrow }
\newcommand\LRar{ \Leftrightarrow }
\newcommand\longrar{\longrightarrow}
\newcommand\Res{\mathop{ \rm Res}}
\newcommand\ord{\mathop{\rm ord}}
\newcommand\charac{\mathop{ \rm char}}
\newcommand{\sm}{{\,\smallsetminus\,}}
\newcommand\N{{\rm N}}
\newcommand\Tr{{\rm Tr}}
\newcommand\coh{{\rm H}}
\newcommand\Ind{\mathop{ \rm Ind}}
\newcommand{\uniff}{\varpi}
\newcommand{\unife}{\pi}
\newcounter{absatzcounter}[section]
\numberwithin{equation}{section}
\begin{document}

\title{Ordinary $GL_2(F)$-representations in characteristic two via affine Deligne-Lusztig constructions}
\author{Alexander B. Ivanov}
\email{ivanov@math.uni-bonn.de}
\date{February 7, 2018}

\keywords{affine Deligne-Lusztig variety, automorphic induction, supercuspidal representations, Bushnell-Kutzko types.}

\begin{abstract}
The group $\GL_2$ over a local field with (residue) characteristic $2$ possesses much more smooth supercuspidal $\ell$-adic representations, than over a local field of residue characteristic $> 2$. One way to construct these representations is via the theory of types of Bushnell-Kutzko. We construct many of them in the cohomology of certain extended affine Deligne-Lusztig varieties attached to $\GL_2$ and wildly ramified maximal tori in it. Then we compare our construction with the type-theoretic one. The corresponding extended affine Deligne-Lusztig varieties were introduced in a preceding article. Also in the present case they turn out to be zero-dimensional.
\end{abstract}

\maketitle

\section{Introduction}
This note is a follow-up of the two previous papers \cite{Ivanov_15_unram, Ivanov_15_ram} studying coverings of extended affine Deligne-Lusztig varieties for $\GL_2$ over a local field (of equal characteristic). Here we analyze representations of $G = \GL_2$ over a local field $F$ of characteristic $2$ attached to a wildly ramified torus. Fix a wildly ramified Galois extension $E/F$ of degree $2$ and relative discriminant $\fd_{E/F} = \fp_F^{d+1}$, and an embedding $\Res_{E/F} \mathbb{G}_m \har G$. On $F$-points this induces an embedding $\iota \colon E^{\times} \har G(F)$. Each maximal minisotorpic wildly ramified torus of $G$ comes via such an embedding and each two embeddings attached to the same extension $E$ are conjugate by $G(F)$, so the various $E$ parametrize the $G(F)$-conjugacy classes of maximal minisotorpic wildly ramified tori in $G$.

We study the attached extended affine Deligne-Lusztig varieties as defined in \cite{Ivanov_15_ram}. They turn out to be zero-dimensional. Our main result is that all (admissible, smooth and irreducible) minimal supercuspidal representations $\pi$ of $\GL_2(F)$ of arbitrary deep normalized level $\ell(\pi)$, attached to $\iota(E^{\times}) \subseteq G(F)$ through the theory of types of Bushnell-Kutzko (see \cite{BushnellH_06}), satisfying $2\ell(\pi) \geq 3d$, occur in the cohomology of these coverings. The condition $2\ell(\pi) > 3d$ forces those representations to be \emph{ordinary}, i.e., they lie in the image of the \emph{imprimitive} Langlands-correspondence (\cite{BushnellH_06} \S44.1). This simply means that the corresponding representation of the Weil group is induced from the Weil group of a degree two (wildly ramified) extension. Conversely, a big portion of the (minimal) ordinary representations does satisfy the condition $\ell(\pi) > 3d$ (\cite{BushnellH_06} \S45.2 Theorem).

More precisely, these representations in a fixed level $\frac{m+d}{2}$ (for an appropriate $m \geq 2d$) will appear as a family parametrized by certain characters of an abelian group, called $\widetilde{\Gamma}/\Gamma^{\prime}$ below (an extension of $E^{\times}/U_E^{m+1}$; see Section \ref{sec:group_acting_on_right}), that is we will construct a map (see \eqref{eq:def_of_Rtheta})
\begin{align*} 
\{ \text{generic characters of } \widetilde{\Gamma}/\Gamma^{\prime} \} &\rightarrow \left\{ \begin{aligned} \text{Isomorphism classes of smooth} \\ \text{irreducible admissible $G(F)$-representations} \end{aligned} \right\} \\
\tilde{\theta} &\mapsto R_{\tilde{\theta}} := \coh_c^0(X_{\dot{w}}^m(1),\overline{\mathbb{Q}}_{\ell})[\tilde{\theta}].
\end{align*}
where $[\tilde{\theta}]$ denotes the $\tilde{\theta}$-isotypic component with respect to a natural action of $\widetilde{\Gamma}/\Gamma^{\prime}$ on a certain extended affine Deligne-Lusztig variety $X_{\dot{w}}^m(1)$.

An interesting fact to point out is that in (a slight reformulation of) \cite{BushnellH_06}, the same representations are also parametrized by characters of an abelian group, which easily can be extracted from the theory of types (see Section \ref{sec:comparison_BKH}, in particular Lemma \ref{lm:char_of_Pi_from_theta_psi_alpha}). We denote it here by $\Pi$. The relation between the two parametrizations is given by a somewhat exotic isomorphism $\beta \colon \widetilde{\Gamma}/\Gamma^{\prime} \stackrel{\sim}{\rar} \Pi$ (see Proposition \ref{prop:beta} and Remark \ref{rem:about_the_exotic_iso}). Here is a simplified version of our main result.

\begin{thm}[see Corollary \ref{cor:irred_level_minimality} and Theorem \ref{thm:relation_ADLV_BH}] Let $\tilde{\theta}$ be a generic character of $\widetilde{\Gamma}/\Gamma^{\prime}$. The $G(F)$-representation $R_{\tilde{\theta}}$ is admissible, irreducible, minimal and supercuspidal. Moreover, if $\beta^{\vee}(\tilde{\theta}) \in \Pi^{\vee}$ is the corresponding character of $\Pi$, and ${\rm BH}_{\beta^{\vee}(\tilde{\theta})}$ denotes the corresponding representation attached via the theory of types, one has
\[
R_{\tilde{\theta}} \cong {\rm BH}_{\beta^{\vee}(\tilde{\theta})}.
\]
\end{thm}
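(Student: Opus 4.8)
The plan is to deduce both assertions from an explicit structural description of $\coh_c^0(X_{\dot{w}}^m(1),\overline{\mathbb{Q}}_\ell)$ together with a matching of inducing data on the two sides. First I would make the zero-dimensionality concrete: since $X_{\dot{w}}^m(1)$ has dimension $0$, its compactly supported cohomology is the space of finitely supported functions on its set of $\overline{\mathbb{F}}_q$-points, and the commuting $G(F)$- and $\widetilde{\Gamma}/\Gamma^{\prime}$-actions are induced by the respective actions on this point set. Decomposing the point set into $G(F)$-orbits and computing stabilizers in terms of the relevant parahoric-type subgroups of $G(F)$ and the embedding $\iota\colon E^{\times}\har G(F)$, I expect each generic $\tilde{\theta}$-isotypic piece to be realized as a compact induction
\[
R_{\tilde{\theta}} \;\cong\; \cInd_{J}^{G(F)}\rho_{\tilde{\theta}},
\]
where $J$ is open and compact modulo the center and $\rho_{\tilde{\theta}}$ is a finite-dimensional representation built from $\tilde{\theta}$ by inflating along $J$ modulo an appropriate congruence subgroup and twisting by a Heisenberg/Weil-type factor. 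This is the same shape as a Bushnell--Kutzko cuspidal type, which is exactly what makes the comparison feasible.

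Granting such a description, admissibility is automatic (the inducing subgroup is open and compact-mod-center), and irreducibility together with supercuspidality reduce, by Mackey's intertwining criterion for compact induction, to showing $\Homm_{J\cap\,{}^{g}\!J}(\rho_{\tilde{\theta}},{}^{g}\!\rho_{\tilde{\theta}})=0$ for every $g\in G(F)\sm J$. I would verify this on a set of representatives for $J\backslash G(F)/J$ read off from the Bruhat--Tits building of $\mathrm{PGL}_2$, reducing the nonvanishing of intertwining to a condition on characters of $\iota(E^{\times})$-adapted subgroups that the genericity hypothesis on $\tilde{\theta}$ rules out. Minimality of $\ell(R_{\tilde{\theta}})=\tfrac{m+d}{2}$ follows from the same input: twisting by $\chi\circ\dett$ only alters the abelian part of $\rho_{\tilde{\theta}}$, and genericity prevents the wild part of the level from dropping.

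For the isomorphism $R_{\tilde{\theta}}\cong\mathrm{BH}_{\beta^{\vee}(\tilde{\theta})}$ I would place the two compact inductions side by side. On the type-theoretic side one has $\mathrm{BH}_{\beta^{\vee}(\tilde{\theta})}=\cInd_{\mathbf{J}}^{G(F)}\Lambda$ for the compact-mod-center group $\mathbf{J}$ attached to the simple stratum coming from $\iota(E^{\times})$ and an extended maximal simple type $\Lambda$; unwinding its construction via Lemma~\ref{lm:char_of_Pi_from_theta_psi_alpha} presents $\Lambda$ as determined by a character of $\Pi$. It then suffices to check two things: that $J$ and $\mathbf{J}$ coincide after conjugating the two embeddings of $\Res_{E/F}\mathbb{G}_m$ into $G$ (which the excerpt guarantees is possible), and that under the exotic isomorphism $\beta\colon\widetilde{\Gamma}/\Gamma^{\prime}\stackrel{\sim}{\rar}\Pi$ of Proposition~\ref{prop:beta} the datum $\rho_{\tilde{\theta}}$ is carried to $\Lambda$ for the character $\beta^{\vee}(\tilde{\theta})$. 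The first is a matching of subgroups defined from the same lattice chain; the second is precisely what $\beta$ is designed to accomplish, namely to reconcile the somewhat ad hoc extension/twist entering $\rho_{\tilde{\theta}}$ with the normalization of \cite{BushnellH_06}. Once the inducing pairs agree, $\cInd$ yields the asserted isomorphism.

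I expect the genuine difficulty to sit in two places. First, the self-intertwining computation for $\rho_{\tilde{\theta}}$ in the wildly ramified, residue-characteristic-$2$ setting: the usual essentially tame simplifications are unavailable, the relevant subgroups are not the standard Iwahori-type ones, and one must track the interplay of the affine Deligne--Lusztig data with wild inertia by hand. Second, pinning down $\beta$ so that it genuinely intertwines the two type constructions rather than merely matching the parametrizing groups abstractly — this is where the delicate normalizations (signs, Gauss-sum/Heisenberg factors, the choice of additive character) must all be reconciled, and it is the step most likely to require the fine structure established in Sections~\ref{sec:group_acting_on_right} and~\ref{sec:comparison_BKH}.
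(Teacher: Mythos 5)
Your global architecture — using zero-dimensionality, the decomposition into $G(F)$-translates of a finite piece, and $R_{\tilde{\theta}}\cong\cInd\nolimits_{\iota(E^{\times})I_F}^{G(F)}\Xi_{\tilde{\theta}}$ — is indeed how the paper sets things up (Proposition \ref{prop:eADLV_higher_level}, Corollary \ref{cor:smallest_tilde_gamma_stable_subscheme}, Lemma \ref{lm:reduction_to_finite_space}). But for the first half of the statement the paper does not run a Mackey self-intertwining computation: it computes $\tr(e_-(u);\Xi_{\tilde{\theta}})$ from the character formula of Proposition \ref{prop:traces_in_general} (Proposition \ref{prop:unipotent_traces}), deduces that $\Xi_{\tilde{\theta}}|_{N^1}$ is the sum of all characters trivial on $N^{n+d+1}$ and nontrivial on $N^{n+d}$, hence that $\Xi_{\tilde{\theta}}$ is irreducible over $B(F)\cap I_F$ (Corollary \ref{cor:Xi_theta_as_N1-rep}), and then invokes \cite{Ivanov_15_ram} Proposition 4.24 for irreducibility, cuspidality, admissibility, the level $\frac{m+d}{2}$ and twist-minimality. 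Your intertwining route is plausible in principle, but you leave its entire content (vanishing of $\Hom_{J\cap{}^{g}J}$ in the wild, residue-characteristic-$2$ setting) unexecuted, and you also have no explicit model of $\rho_{\tilde{\theta}}$ to intertwine with: $\Xi_{\tilde{\theta}}$ is only given as an isotypic piece of functions on a finite set, not as an inflation twisted by a Heisenberg/Weil factor, and such a model is neither constructed in the paper nor needed by it.

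The genuine gap is in the comparison step. You assert that, once the inducing subgroups are matched, carrying $\rho_{\tilde{\theta}}$ to $\Lambda$ ``is precisely what $\beta$ is designed to accomplish.'' But Proposition \ref{prop:beta} defines $\beta$ purely as an abstract isomorphism of parameter groups $\Pi\stackrel{\sim}{\rar}\widetilde{\Gamma}/\Gamma^{\prime}$; nothing in its construction relates it to either representation-theoretic construction, and its compatibility with the two parametrizations is exactly the content of Theorem \ref{thm:relation_ADLV_BH}, not an available input — as written, your last step is circular. (Also the two inducing subgroups do not literally coincide: the geometric datum lives on $\iota(E^{\times})I_F$ while $\Lambda_{\theta,\psi,\alpha}$ is a character of $\iota(E^{\times})I_F^{n+d}$; the paper compares $\Xi_{\tilde{\theta}}$ with $\cInd\nolimits_{\iota(E^{\times})I_F^{n+d}}^{\iota(E^{\times})I_F}\Lambda_{\theta,\psi,\alpha}$ after inducing in stages.) The mechanism that actually closes the argument is character-theoretic: both sides are cuspidal inducing data on $\iota(E^{\times})I_F$ with the same central character, the same level $\frac{m+d}{2}$ and twist-minimality (Corollaries \ref{cor:central_char_kernel_dim}, \ref{cor:irred_level_minimality}, Lemma \ref{lm:level_and_minimality_of_BH_side}), so a sharpened form of \cite{BushnellH_06} 27.8 Proposition reduces the desired isomorphism to equality of traces at minimal elements $g$ with $\ord_F(\det g)$ odd; that equality is Proposition \ref{prop:hard_traces}, proved by matching the geometric trace formula against the Mackey formula on the induced side through an explicit bijection $\gamma$ and the verification $\beta(C_{x,y,\lambda},z_{x,y,\lambda}+z_{u,y,\lambda})=i(t_{\gamma(y,\lambda)},r_{\gamma(y,\lambda)})$. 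Some substitute for this computation — either the trace identity, or an explicit type-theoretic model of $\Xi_{\tilde{\theta}}$ together with a verified compatibility of $\beta$ — is indispensable; your proposal correctly locates the difficulty but supplies no mechanism to resolve it.
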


To show the first part of the theorem, the second part, i.e., the comparison with Bushnell-Kutzko types is not necessary. In the heart of the proofs of both parts are certain trace computations on the geometric and the type-theoretic side, see Section \ref{sec:hard_traces}.

As the main results here and in \cite{Ivanov_15_ram} indicate, the way how the extended affine Deligne-Lusztig varieties from \cite{Ivanov_15_ram} realize the ``automorphic induction'' of characters of $F$-points of maximal minisotorpic \emph{ramified} tori to $G(F)$ is quite near to the theory of Bushnell-Kutzko types. A nice consequence of this is the fact that it gives a geometric realization of the theory of types in (highly) ramified cases. A less clear consequence is that it seems to be further away from the Galois side than one might hope, see Section \ref{sec:rectifier}. However, it is still an open and interesting question, whether there is a twist of the actions on the geometric objects in the style of \cite{Weinstein_09} Section 5, which establishes a connection to the Galois side.

In Section \ref{sec:remark_on_tame_case} we discuss a slight simplification for the proof of the main result in \cite{Ivanov_15_ram}, which is concerned with the similar construction for $\GL_2$ and a purely tamely ramified torus. 

\subsection*{Acknowledgments} I would like to thank Guy Henniart, Laurent Fargues, Jean-Fran\c{c}ois Dat and Daniel Kirch for the fruitful discussions during my stay in Paris. I'm especially grateful to Guy Henniart, who made several helpful suggestions concerning this work. This work was written during the author's stay at the University Paris 6 (Jussieu). It was supported by a postdoctoral research fellowship of the German Research Foundation (DFG). 


\section{Automorphic induction from wild tori in $\GL_2$} \label{sec:extended_ADLV_and_Gamma_tilde_etc}

In this section we assume that $F$ has characteristic $2$ and put $G = \GL_2$. 

\subsection{Notations and preliminaries}\label{sec:not_and_prel}

We need to fix more notation. 
For a local non-archimedean field field $L$, denote by $\caO_L$ its integers, by $\fp_L$ its maximal ideal, $U_L = \caO_L^{\times}$, by $U_L^m$ the $m$-units of $L$, and by $\ord_L$ its valuation, normalized such that that it takes value $1$ on an uniformizer. For integers $a < b$, we introduce a shortcut notation:
\[
(\fp_L^a/\fp_L^b)^{\ast} := (\fp_L^a / \fp_L^b) \sm (\fp_L^{a+1} / \fp_L^b).
\]

\subsubsection{Arithmetical data} \label{sec:basic_notation}

We let $F$ be a local non-archimedean field of characteristic $2$ with residue field $k = \bF_q$. We let $E$ be a totally (wildly) ramified extension of $F$ of degree $2$ and discriminant $\fp_F^{d+1}$ for some $d > 0$ odd. By Artin-Schreier theory, we may choose uniformizers $\uniff$ resp. $\unife$ of $F$ resp. $E$, such that $\unife$ satisfies the minimal equation over $F$,  
\[
\unife^2 + \Delta \unife + \uniff = 0,
\]
for some $\Delta \in F$ with $\ord_F(\Delta) = \frac{d+1}{2}$. Concretely, we have $F = k((\uniff))$, $\caO_F = k\llbracket \uniff \rrbracket$, $E = k((\unife))$, $\caO_E = k\llbracket \unife \rrbracket$. 

We denote by $\N_{E/F} \colon E^{\times} \rar F^{\times}$ resp. by $\tr_{E/F} \colon E \rar F$ the norm resp. the trace map of $E/F$. We denote the maps induced by $\tr_{E/F}$ on subquotients of $\caO_E$ and $\caO_F$ again by $\tr_{E/F}$.  We let $\tau$ be the unique non-trivial element of the Galois group of $E/F$. We have $\tr_{E/F}(\unife) = \tau(\unife) + \unife = \Delta$ and $\N_{E/F}(\unife) = \unife \tau(\unife) = \uniff$. 

Set $\varepsilon := \frac{\tau(\unife)}{\unife}$. Then $\varepsilon \in U_E^d \sm U_E^{d+1}$. For convenience we will write $\varepsilon = 1 + \unife^d \varepsilon_0$ with $\varepsilon_0 \in U_E$. Clearly, $\varepsilon_0 = \unife^{-(d+1)}\Delta$.

\subsubsection{Group-theoretical data}\label{sec:group_th_data}
We set $G = \GL_2$ throughout this section. We denote by $Z$ the center of $G$, by $T$ the (split) diagonal torus and by $W$ the Weyl group of $G,T$. Further, $\widetilde{W} = X_{\ast}(T) \rtimes W$ denotes the extended affine Weyl group and $W_{\rm aff} \subseteq \widetilde{W}$ the affine Weyl group. The latter is a Coxeter group and we denote by $\ell(\cdot)$ the length function on it. 

Consider the embedding of $F$-algebras 
\[
\iota \colon E \har {\rm Mat}_{2 \times 2}(F), \quad \unife \mapsto \matzz{\Delta}{1}{\uniff}{0}
\]
\noindent We denote again by $\iota$ its restriction to the embedding $\iota \colon E^{\times} \har G(F)$. Its image are the $F$-points of a maximal minisotropic torus of $G$, which is split by $E$.
  

\subsubsection{Properties of trace and norm}

We will make use of the following well-known facts:
\begin{lm}\label{lm:trace_norm} 
\begin{itemize}
\item[(i)][\cite{BushnellH_06} 41.2]  Let $k \in \bZ$. We have $\tr_{E/F}(\fp_E^k) = \fp_F^{\ell}$, where $\ell = \lfloor \frac{k+d+1}{2}\rfloor$
\item[(ii)] Let $x \in E^{\times}$. Then $x^{-1}\tau(x) \in U_E^d$. If $x \in U_E^k$ with $k \geq 0$, then $x^{-1}\tau(x) \in U_E^{k + d}$.
\end{itemize}
\end{lm}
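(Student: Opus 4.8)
The statement to prove is Lemma~\ref{lm:trace_norm}, which has two parts. I will treat them in order.

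\textbf{Part (i).} The plan is to reduce the computation of $\tr_{E/F}(\fp_E^k)$ to the case $k=0$ by exploiting compatibility of the trace with multiplication by powers of the uniformizer $\uniff$ of $F$. Concretely, since $\fp_E^{2j} = \uniff^j \caO_E$ and $\tr_{E/F}$ is $F$-linear, we have $\tr_{E/F}(\fp_E^{k+2}) = \uniff \cdot \tr_{E/F}(\fp_E^k)$, so it suffices to compute $\tr_{E/F}(\caO_E)$ and $\tr_{E/F}(\fp_E)$ and then shift. For the base cases I would use the $k$-basis $\{1,\unife\}$ of $\caO_E$ over $\caO_F$: one has $\tr_{E/F}(1) = 2 = 0$ (we are in characteristic $2$!) and $\tr_{E/F}(\unife) = \Delta$ with $\ord_F(\Delta) = \tfrac{d+1}{2}$. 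Hence $\tr_{E/F}(\caO_E) = \caO_F \tr_{E/F}(1) + \caO_F \tr_{E/F}(\unife) = \fp_F^{(d+1)/2}$, which matches $\lfloor (0+d+1)/2\rfloor = (d+1)/2$ since $d$ is odd. Similarly $\fp_E = \unife\caO_E$ has $\caO_F$-basis $\{\unife, \unife^2\}$ and $\unife^2 = \Delta\unife + \uniff$ (from the Artin--Schreier equation), so $\tr_{E/F}(\unife^2) = \Delta\,\tr_{E/F}(\unife) + \uniff\,\tr_{E/F}(1) = \Delta^2$, giving $\tr_{E/F}(\fp_E) = \fp_F^{d+1} + \caO_F\Delta^2$; since $\ord_F(\Delta^2) = d+1$ as well, this equals $\fp_F^{d+1}$, matching $\lfloor (1+d+1)/2\rfloor = (d+2)/2 = (d+1)/2 + 1$? — here I should double-check: $\lfloor (1+d+1)/2 \rfloor = \lfloor (d+2)/2\rfloor = (d+1)/2$ since $d$ is odd, so $d+2$ is odd and the floor drops the half. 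So indeed $\tr_{E/F}(\fp_E) = \fp_F^{(d+1)/2}$, consistent. (Since this is cited to \cite{BushnellH_06} 41.2, one may simply invoke it; I include the argument for self-containedness.) The general formula then follows by multiplying by the appropriate power of $\uniff$.

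\textbf{Part (ii).} Here I would argue directly. For $x \in E^\times$, write $x = \unife^n u$ with $u \in U_E$ and $n = \ord_E(x)$. Then $x^{-1}\tau(x) = \unife^{-n}u^{-1}\,\tau(\unife)^n\tau(u) = \varepsilon^n\, u^{-1}\tau(u)$, where $\varepsilon = \tau(\unife)/\unife \in U_E^d \setminus U_E^{d+1}$ was introduced just above in the excerpt. Since $\varepsilon^n \in U_E^d$ for all $n \in \bZ$ (the subgroup $U_E^d$ is a group, closed under inverses), it remains to show $u^{-1}\tau(u) \in U_E^d$ for $u \in U_E$, and more precisely that $u \in U_E^k$ with $k \ge 0$ forces $u^{-1}\tau(u) \in U_E^{k+d}$ — this last claim then covers the first claim by taking $k = 0$, once we know $u^{-1}\tau(u) \in U_E^d$ for all units, which is again the $k=0$ instance. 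So the core assertion is: for $u \in U_E^k$, $k \ge 0$, one has $\tau(u)/u \in U_E^{k+d}$. I would prove this by writing $u = 1 + \unife^k a$ with $a \in \caO_E$ (for $k \ge 1$; for $k=0$ write $u$ as a Teichmüller-type representative times a $1$-unit, but in char $2$ with residue field $\bF_q$ one can say: $u \bmod \fp_E$ lies in $k^\times \subseteq \caO_F^\times$, hence is fixed by $\tau$, reducing to the $k \ge 1$ case). Then $\tau(u) = 1 + \tau(\unife)^k\tau(a) = 1 + \varepsilon^k\unife^k\tau(a)$, so
\[
\tau(u) - u = \unife^k\big(\varepsilon^k\tau(a) - a\big) = \unife^k\big((\varepsilon^k - 1)\tau(a) + (\tau(a) - a)\big).
\]
Now $\varepsilon^k - 1 \in \fp_E^d$ since $\varepsilon \in U_E^d$, and $\tau(a) - a$: writing $a = \sum c_i \unife^i$ with $c_i \in k$ (so $\tau(c_i) = c_i$), we get $\tau(a) - a = \sum c_i(\tau(\unife)^i - \unife^i) = \sum c_i\unife^i(\varepsilon^i - 1) \in \fp_E^d$ since each $\varepsilon^i - 1 \in \fp_E^d$. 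Hence $\tau(u) - u \in \fp_E^{k+d}$, and dividing by $u \in U_E$ gives $\tau(u)/u - 1 \in \fp_E^{k+d}$, i.e. $u^{-1}\tau(u) \in U_E^{k+d}$, as desired.

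\textbf{Main obstacle.} The routine-looking point that actually needs care is the $k = 0$ case of part (ii): a general unit $u \in U_E$ is not of the form $1 + \unife\caO_E$, so one cannot immediately differentiate as above. The clean fix is the observation that, because $E/F$ is \emph{totally} ramified, the residue field of $E$ equals $k = \bF_q$, which sits inside $F$ and is therefore pointwise $\tau$-fixed; hence $\tau(u)u^{-1} \equiv 1 \bmod \fp_E$ automatically, reducing any unit to the $k \ge 1$ situation. Once this reduction is in place, everything else is the straightforward expansion above, using repeatedly that $\varepsilon^i - 1 \in \fp_E^d$ for every $i \in \bZ$ — which holds because $U_E^d$ is a subgroup of $U_E$. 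Part (i) is genuinely routine given the characteristic-$2$ vanishing of $\tr_{E/F}(1)$ and the valuation of $\Delta$, and in any case is available from the cited reference.
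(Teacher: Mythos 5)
Your proof is correct, and for part (ii) it takes a slightly different route than the paper. The paper's argument expands $x^{-1}\tau(x)$ for $x=1+\unife^k y$ as a geometric series and reduces everything to the estimate $y+\varepsilon^k\tau(y)\equiv 0 \bmod \fp_E^d$, which it deduces from part (i): in characteristic $2$ one has $y+\tau(y)=\tr_{E/F}(y)\in\fp_F^{(d+1)/2}\subseteq\fp_E^{d+1}$, while $(\varepsilon^k-1)\tau(y)\in\fp_E^d$. You instead bound $\tau(u)-u$ directly and prove $\tau(a)-a\in\fp_E^d$ by expanding $a\in\caO_E=k\llbracket\unife\rrbracket$ coefficientwise and using $\varepsilon^i-1\in\fp_E^d$ termwise; this makes part (ii) independent of part (i) and is more elementary, though it leans on the equal-characteristic Laurent-series model of $E$ (the $\tau$-fixedness of the coefficients), whereas the paper's trace-based estimate would survive verbatim in mixed characteristic. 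Your reductions for $k=0$ (factor out the leading coefficient in $k^\times$, which is $\tau$-fixed) and for general $x\in E^\times$ (factor out $\unife^v$ and use $\varepsilon^v\in U_E^d$) are exactly the paper's. For part (i), which the paper simply cites, your basis computation is fine in substance, but note the slip in the $k=1$ base case: $\caO_F\tr_{E/F}(\unife)=\caO_F\Delta=\fp_F^{(d+1)/2}$, not $\fp_F^{d+1}$ (you appear to have used the $E$-valuation $\ord_E(\Delta)=d+1$ in place of $\ord_F(\Delta)=\frac{d+1}{2}$); with that corrected, $\tr_{E/F}(\fp_E)=\caO_F\Delta+\caO_F\Delta^2=\fp_F^{(d+1)/2}$, matching your final statement and the formula $\lfloor\frac{1+d+1}{2}\rfloor=\frac{d+1}{2}$.
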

\begin{proof}
(i): see \cite{BushnellH_06} 41.2. (ii): If $k > 0$ and $x\in U_E^k$, write $x = 1 + \unife^k y$ for some $y \in \caO_E$. One computes
\begin{eqnarray*} 
x^{-1}\tau(x) = \frac{1 + \unife^k\varepsilon^k\tau(y)}{1 + \unife^k y} = (1 + \unife^k\varepsilon^k \tau(y))\sum_{i=0}^{\infty} (\unife^k y)^i = 1 + (y + \varepsilon^k \tau(y)) \sum_{i=1}^{\infty} \unife^{ki} y^{i-1}. 
\end{eqnarray*}
As $\varepsilon \equiv 1 \mod \fp_E^d$, part (i) of the lemma shows that  $(y + \varepsilon^k \tau(y)) \equiv 0 \mod \fp_E^d$. If $k = 0$, we may write $x = x_0(1+\unife^n y)$ with $x_0 \in k$ and some $n > 0$ and $y\in \caO_E$, and then apply the already proven case $k > 0$. This shows the second statement of (ii). If $x \in E^{\times}$, we may write $x = \unife^v x_0$ with $x_0 \in U_E$ and $v \in \bZ$ and thus $x^{-1}\tau(x) = \varepsilon^v x_0^{-1}\tau(x_0)$. Now the first statement of (ii) follows from the second.
\end{proof}


\subsubsection{Bruhat-Tits buildings} \label{sec:BTB}
For $L = F$ or $L = E$, we denote by $\mathscr{B}_L$ the Bruhat-Tits building of $G$ over $L$ and by $\mathscr{A}_L \subseteq \mathscr{B}_L$ the apartment of $T$. There is a natural $\tau$-action on $\mathscr{B}_E$. Moreover, there is a natural embedding $\iota \colon \mathscr{B}_F \har \mathscr{B}_E$ in the sense of \cite{Rousseau_77} Definition 2.5.1. We identify $\cB_F$ and $\iota(\cB_F)$ as sets but regard $\iota(\cB_F)$ provided with the structure of a sub-simplicial complex of $\cB_E$. As $E/F$ is ramified, each alcove of $\cB_F$ contains exactly two alcoves of $\iota(\cB_F)$. In particular, we observe that there is a bijection 
\begin{equation}\label{eq:bijection_alcoves_vertices}
(\text{alcoves of $\cB_F$}) \stackrel{1:1}{\leftrightarrow} (\text{vertices of $\iota(\cB_F)$, which are not vertices of $\cB_F$})
\end{equation}

\noindent As the ramification of $E/F$ is wild, we have $\iota(\mathscr{B}_F) \subsetneq \mathscr{B}_E^{\langle \tau \rangle}$, due to the occurrence of the so-called \emph{barbs}. 

We have $\mathscr{A}_E = \mathscr{A}_F$ as sets, and each alcove of $\mathscr{A}_F$ contains exactly two alcoves of $\mathscr{A}_E$. Fix a base alcove $\underline{a}_0$ in $\mathscr{A}_E$ and let $I_E \subseteq G(E)$ be its stabilizer. It is an Iwahori subgroup of $G(E)$. Further, we have the corresponding Iwahori subgroup $I_F := I_E \cap G(F)$ of $G(F)$. The alcove $\underline{a}_0$ has precisely one vertex, which is also a vertex of an alcove of $\mathscr{A}_F$. This is a hyperspecial vertex in $\mathscr{B}_E$ and we specify the splitting
 $\widetilde{W} = X_{\ast}(T) \rtimes W$ as being attached to this point. We denote by $P_{1/2}$ the vertex of $\underline{a}_0$, which is not a vertex of $\cB_F$. 


\subsubsection{Root subgroups}
Let $\Phi = \{+, -\}$ be the set of roots of $T$ in $G$, $+$ (resp. $-$) being the root contained in the upper (resp. lower) triangular Borel subgroup. We may regard $0$ as a root, with root subgroup $T (\cong \bG_m^2)$. For $\ast \in \Phi \cup \{0\}$, denote by
\[
e_{\ast} \colon U_{\ast} \rar G
\]
the embedding of the root subgroup. Thus for $a \in E$, we have $e_+(a) = \matzz{1}{a}{0}{1}$, $e_0(c,d) = \matzz{c}{0}{0}{d}$, etc.


\subsubsection{Level subgroups} \label{sec:level_subgroups} For $n \geq 0$ and $L = F$ or $L = E$ define the normal subgroups
\[
I_L^n :=  \matzz{1 + \fp_L^{\lfloor \frac{n+1}{2} \rfloor}}{\fp_L^{\lfloor \frac{n}{2} \rfloor}}{\fp_L^{\lfloor \frac{n}{2} \rfloor + 1}}{1 + \fp_L^{\lfloor \frac{n+1}{2} \rfloor}} \subseteq \matzz{\caO_L^{\times}}{\caO_L}{\fp_L}{\caO_L^{\times}} = I_L 
\]
(we choose $\underline{a}_0$ in Section \ref{sec:BTB} such that $I_E$ has this form). This coincides with the notation in \cite{BushnellH_06}. In particular, for $m \geq 0$, 
\[ 
I_E^{2m+1} = \matzz{1 + \fp_E^{m+1}}{\fp_E^m}{\fp_E^{m+1}}{1 + \fp_E^{m+1}} 
\]


\subsubsection{Vertex of departure} Let $\mathscr{C} \subseteq \mathscr{B}_E$ be a connected non-empty subcomplex. Let $C$ be an alcove of $\mathscr{B}_E$, which is not contained in $\mathscr{C}$. There is a unique gallery $\Gamma = (C_0, C_1, \dots, C_d)$ of minimal length $d$, such that $C_0 = C$ and $C_d$ is not contained in $\cC$, but has a (unique) vertex contained in $\cC$. This vertex is called the \emph{vertex of departure} of $C$ from $\cC$ (as in \cite{Reuman_02}). We define the \emph{distance} $C$ from $\cC$ to be the integer ${\rm dist}(C; \cC) := d+1$. For $C$ in $\cC$, we define the distance ${\rm dist}(C; \cC)$ to be zero.


\subsubsection{Barbs in $\cB_E$} 

\begin{prop}\label{prop:barbs}
The subcomplex $\cB_E^{\langle \tau \rangle}$ is equal to the closure of the union of all alcoves of $\cB_E$ with distance $\leq d$ to $\iota(\cB_F)$.
\end{prop}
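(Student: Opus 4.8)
The plan is to realise the tree underlying $\cB_E$ (the reduced building of $\mathrm{PGL}_2(E)$) as the set of homothety classes of $\caO_E$-lattices in $E^2=F^2\otimes_F E$, with $\tau$ acting through the second tensor factor; the central $\bG_m$-direction of $\cB_E$ is pointwise fixed by $\tau$ and matches that of $\cB_F$, so it contributes nothing and I work with the tree. As $E/F$ is totally ramified, $E$ has residue field $k=\bF_q$ and $\tau$ induces the identity on it, so for a $\tau$-stable lattice $\Lambda$ the map $\tau$ descends to an $\bF_q$-linear automorphism of $\Lambda/\unife\Lambda\cong\bF_q^2$. Two preliminary reductions: first, if $[\Lambda]$ is $\tau$-fixed then some representative is genuinely $\tau$-stable (write $\tau(\Lambda)=\unife^n\Lambda$ and apply $\tau$; the relation $\N_{E/F}(\unife)=\uniff$ forces $n=0$). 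Second, $\iota(\cB_F)$ is pointwise $\tau$-fixed (functoriality of the building, cf.\ \cite{Rousseau_77} and Section \ref{sec:BTB}), and, by \eqref{eq:bijection_alcoves_vertices} together with the fact that every edge of $\cB_E$ at a vertex of $\cB_F$ already lies in $\iota(\cB_F)$, the vertex of departure of any alcove of $\cB_E$ outside $\iota(\cB_F)$ is one of the ``midpoint'' vertices (vertices of $\iota(\cB_F)$ which are not vertices of $\cB_F$); thus $\cB_E\setminus\iota(\cB_F)$ is a disjoint union of ``barbs'' sprouting transversally from midpoint vertices. Finally, since $\iota(\cB_F)$ is a non-empty $\tau$-fixed subtree, the geodesic from any $\tau$-fixed point to its nearest point of $\iota(\cB_F)$ is $\tau$-fixed; hence no alcove of $\cB_E$ is flipped, $\cB_E^{\langle\tau\rangle}$ is a genuine subcomplex, and it is obtained from $\iota(\cB_F)$ by attaching a sub-barb at each midpoint vertex.

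The second step is a local dichotomy, and it is here that $\Char k=2$ enters. At a $\tau$-fixed vertex $v$, the induced involution of $\Lambda_v/\unife\Lambda_v\cong\bF_q^2$ lies in $\GL_2(\bF_q)$ with $q$ even and has order dividing $2$, hence is the identity or a transvection, fixing respectively all $q+1$ or exactly one of the directions (neighbouring classes) at $v$. Thus every vertex of $\cB_E^{\langle\tau\rangle}$ has valency $q+1$ inside $\cB_E^{\langle\tau\rangle}$ (``$\tau$ trivial on residues at $v$'') or valency $1$ (``$\tau$ a transvection at $v$''). A vertex of $\cB_F$ has all its directions in $\iota(\cB_F)$ and a midpoint vertex has two of them in $\iota(\cB_F)$, so all vertices of $\iota(\cB_F)$ are of the first type; it remains to decide, for a barb vertex at distance $j\ge 1$ from $\iota(\cB_F)$, which type it is --- the barb continues past it iff $\tau$ is trivial on residues there, and stops there (at a transvection vertex, whose unique fixed direction is necessarily the one back towards $\iota(\cB_F)$) otherwise.

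The heart of the argument is the induction along a single barb. Since $G(F)$, the group of $\tau$-fixed points of $G(E)$, acts on $\cB_E^{\langle\tau\rangle}$ and is transitive on midpoint vertices (via \eqref{eq:bijection_alcoves_vertices} and transitivity on alcoves of $\cB_F$), I may take the midpoint to be $P=[\caO_E e_1\oplus\unife\caO_E e_2]$; then a barb vertex $u_j$ at distance $j$ has the shape $\Lambda_{u_j}=\caO_E f_1^{(j)}\oplus\unife^{j+1}\caO_E f_2$ with $f_2=e_2$ (so $\tau(f_2)=f_2$) and $f_1^{(j)}=f_1^{(j-1)}+c^{(j)}\unife^{j}f_2$ for the chosen transverse direction $c^{(j)}$, with $f_1^{(0)}=e_1$. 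Writing $\tau(f_1^{(j)})=f_1^{(j)}+\delta_j$ one gets $\delta_0=0$ and $\delta_j=\delta_{j-1}+c^{(j)}(\varepsilon^{j}-1)\unife^{j}f_2$, where $\varepsilon=\tau(\unife)/\unife=1+\unife^{d}\varepsilon_0$. The crucial input of characteristic $2$ is the Frobenius identity $\varepsilon^{2^{a}}-1=(\varepsilon-1)^{2^{a}}=\varepsilon_0^{2^{a}}\unife^{2^{a}d}$, from which $\ord_E\!\big((\varepsilon^{j}-1)\unife^{j}\big)=2^{v_2(j)}d+j\ge d+1$ with equality at $j=1$; hence $\delta_j$ has $\unife$-adic valuation exactly $d+1$ for every $j\ge 1$. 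Reducing mod $\unife\Lambda_{u_j}=\unife\caO_E f_1^{(j)}\oplus\unife^{j+2}\caO_E f_2$ shows that $\tau$ is trivial on $\Lambda_{u_j}/\unife\Lambda_{u_j}$ iff $d+1\ge j+2$, i.e.\ $j\le d-1$; and for $j=d$, $\tau$ is a transvection whose fixed line is the image of $\unife\Lambda_{u_{d-1}}$, i.e.\ the direction back towards $\iota(\cB_F)$. Consequently each barb is exactly the set of vertices at distance $\le d$: it reaches distance $d$ and no $\tau$-fixed vertex lies beyond, for otherwise the fixed geodesic joining such a vertex to $\iota(\cB_F)$ would exhibit a forward fixed direction at a distance-$d$ vertex, contradicting the transvection statement.

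Finally I would pass back to the statement via the ``vertex of departure'' bookkeeping of Section \ref{sec:BTB}: the $j$-th alcove along a barb has ${\rm dist}(\,\cdot\,;\iota(\cB_F))=j$, whereas the alcoves of $\iota(\cB_F)$ have distance $0$; so $\cB_E^{\langle\tau\rangle}=\iota(\cB_F)\cup(\text{barbs of distance }\le d)$ is precisely the closure of the union of all alcoves of $\cB_E$ at distance $\le d$ from $\iota(\cB_F)$. I expect the main obstacle to be the third step: propagating the lattice basis changes correctly along a barb and checking that the error $\delta_j$ has valuation stabilising at $d+1$ rather than improving with $j$. This is exactly where the wild ramification --- the exact gap $\ord_E(\varepsilon-1)=d$ --- and $\Char k=2$ --- the Frobenius identity above, and the fact that involutions of $\GL_2$ over a field of even characteristic are only the identity and transvections --- are used essentially; the reduced-versus-full building reduction and the homothety normalisation are routine.
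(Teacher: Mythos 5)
Your argument is correct, and it shares the paper's skeleton: the same counting claim that the vertex of departure of any alcove outside $\iota(\cB_F)$ is a midpoint vertex, the reduction by $G(F)$-transitivity to the fixed midpoint $P_{1/2}$, coordinates $\sum_i c^{(i)}\unife^i$ with $c^{(1)}\neq 0$, and the same numerical heart, namely $\ord_E(\tau(a)+a)=d+1$ for such $a$ (you derive this from $\varepsilon^{2^a}-1=(\varepsilon-1)^{2^a}$, while the paper quotes Lemma \ref{lm:trace_norm}, since $\tau(a)+a=\tr_{E/F}(a)$ in characteristic $2$). Where you genuinely diverge is the verification step. The paper tests each alcove $e_-(a)\dot{v}I_E$ directly: $\tau$-fixedness is the single Iwahori condition $v^{-1}e_-(\tau(a)+a)v\in I_E$, i.e.\ $(d+1)-2\lfloor\frac{\ell(v)+1}{2}\rfloor\geq 1$, which yields ``fixed iff $\ell(v)\leq d$'' for every alcove in one stroke. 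You instead run a vertex-by-vertex lattice induction: the involution induced on $\Lambda/\unife\Lambda$ is the identity or a transvection (characteristic $2$), the error $\delta_j$ has valuation exactly $d+1$, so barb vertices are fixed with full valency up to distance $d-1$, become transvection points at distance $d$, and convexity of the fixed-point set of an isometry of a tree (plus your no-flip remark) excludes everything beyond. Your route buys a finer local picture --- it says exactly at which vertices the fixed subtree branches and where it terminates --- at the cost of two extra ingredients (the residue dichotomy and convexity) that the coset criterion renders unnecessary; the paper's computation is shorter but purely coordinate-based. Two points you should make explicit: the exactness $\ord_E(\delta_j)=d+1$ uses $c^{(1)}\neq 0$ (the analogue of the paper's $a_1\neq 0$), and your assertion that every edge of $\cB_E$ at a vertex of $\cB_F$ already lies in $\iota(\cB_F)$ is precisely the paper's $(q+1)$-versus-$(q+1)$ counting claim and deserves its one-line proof.
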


\begin{proof}
We have $\cB_F \subseteq \cB_E$. Let $D$ be an alcove in $\cB_E$, not contained in $\cB_F$. Let ${\rm depart}(D; \iota(\cB_F))$ be the vertex of departure of $D$ from $\iota(\cB_F)$. We claim that the vertex ${\rm depart}(D; \iota(\cB_F))$ of $\iota(\cB_F)$ is not a vertex of $\cB_F$. Indeed, let a vertex $P$ of $\cB_F$ be given. The number of all alcoves of $\iota(\cB_F)$ having $P$ as a vertex is exactly $q + 1$ and the same is true for the number of alcoves of $\cB_F$, having $P$ as a vertex. Thus any alcove of $\iota(\cB_F)$ which has $P$ as a vertex, is necessarily contained in an alcove of $\cB_F$, which shows our claim. 

We have to show that an alcove $D$ in $\cB_E$, which does not lie in $\iota(\cB_F)$, lies in $\cB_E^{\langle \tau \rangle}$ if and only if its distance to $\iota(\cB_F)$ is $\leq d$. Using \eqref{eq:bijection_alcoves_vertices}, the transitive action of $G(F)$ on alcoves of $\cB_F$ and the above claim, we may assume that ${\rm depart}(D; \iota(\cB_F))$ is $P_{1/2}$. Let $v \in W_{\rm aff}$ be such that $D$ lies in the (open) Schubert cell $C_v$ attached to $v$. Fix a parametrization of $C_v$: $\bA_k^{\ell(v)} \stackrel{\sim}{\rar} C_v$, given by $a = \sum_{i=1}^{\ell(v)} a_i \unife^i \mapsto e_-(a)vI_E$ and let $D$ correspond to $a = \sum_{i} a_i \unife^i$. We compute 
\[
D \in \cB_E^{\langle \tau \rangle} \LRar e_-(a)vI_E = e_-(\tau(a))vI_E \LRar v^{-1}e_-(\tau(a) - a)v  = e_-(\unife^{ - 2 \lfloor \frac{\ell(v) + 1}{2} \rfloor} (\tau(a) - a)) \in I_E 
\]

\noindent The fact that $D$ does not lie in $\iota(\cB_F)$ (or equivalently, in $A_E$) is equivalent to $a_1 \neq 0$. By Lemma \ref{lm:trace_norm}, we have  $\ord_E(\tau(a) - a) = d+1$. Thus $D \in \cB_E^{\langle \tau \rangle}$ if and only if 
\[
(d + 1) - 2 \lfloor \frac{\ell(v) + 1}{2} \rfloor \geq 1.
\] 
\noindent Recalling that $d$ is odd, we see that this is equivalent to $\ell(v) \leq d$. Finally, note that the distance from $D$ to $\iota(\cB_F)$ is precisely $\ell(v)$. This finishes the proof.
\end{proof}


\subsection{Extended affine Deligne-Lusztig varieties}\label{sec:slight_simplification} 

Let $\breve{E}$ denote the completion of the maximal unramified extension of $E$, and let $\Sigma \subseteq \Gal(\breve{E}/F)$ be a finite set of generators of the Galois group, e.g. the set of all Frobenius lifts. 
In \cite{Ivanov_15_ram} an extended affine Deligne-Lusztig variety attached to $G$ over $F$ and a minisotropic torus split over $E$ was defined (in level $J \subseteq G(\breve{E})$) as a locally closed subset of $G(\breve{E})/J$ cut out by Deligne-Lusztig-type conditions, one for each generator $\gamma \in \Sigma$. 
For $G = \GL_2$ and $E/F$ tamely ramified quadratic extension, it turned out that one could equally define them in $G(E)/J \cap G(E)$ just by one Deligne-Lusztig condition attached to the non-trivial element of $\Gal(E/F)$ (see Remark 3.11(i) of \cite{Ivanov_15_ram}) and we conjecture that a similar fact is true whenever $S$ is split over a totally ramified Galois extension. 
At least in the present article we can and will omit the passage to the maximal unramified extension $\breve{E}/E$. More precisely, we will work the following variant of \cite{Ivanov_15_ram} Definition 2.1, which only makes sense for totally ramified extensions. 

\begin{Def} For $J \subseteq I_E$, $\dot{w} \in G(E)$, the corresponding extended affine Deligne-Lusztig variety attached to $b \in G(E)$ of level $J$ is 
\begin{equation}\label{eq:simplified_ADLV} 
X_{\dot{w}}^J(b) := \{ gJ \in G(E)/J \colon g^{-1} b \tau(g) \in J\dot{w}J \} \subseteq G(E)/J. 
\end{equation}
\end{Def}
Let $J_b(F)$ denote the $\tau$-stabilizer of $b$ in $G(E)$. If $J \subseteq I_E$ is normal and $w \in \widetilde{W}$, the group $Z(E)I_E/J$ act by $\tau$-conjugation on $\{J\dot{w}J \colon \dot{w} \in I_EwI_E \}$. Let $\widetilde{\Gamma} = \widetilde{\Gamma}_{\dot{w}} \subseteq Z(E)I_E/J$ denote the $\sigma$-stabilizer of the double coset $J\dot{w}J$ (in the general situation, the group $\Gamma$ was introduced in \cite{Ivanov_15_ram} Section 2.2.3). There is a natural action of $J_b(F) \times \widetilde{\Gamma}_{\dot{w}}$ on $X_{\dot{w}}^J(b)$ by $(g,t), xJ \mapsto gxt^{-1}J$. 

We will concentrate on the case $b = 1$ in this article. We will denote by $X_{\dot{w}}^m(1)$ the varieties in the level $I^{2m+1}_E$ and by $X_w(1)$ the varieties in the level $I_E$.


\subsection{Sketch of the situation} \label{sec:sketch_of_situation} 

In the following we mainly will deal with the $\Gamma$-torsor,
\[\
X_{\dot{w}}^m(1) \stackrel{\Gamma}{\longrightarrow} X_w(1), 
\]
\noindent where $\Gamma = \widetilde{\Gamma}_{\dot{w}} \cap I_E/I_E^{2m+1}$ is a finite group, acting by right multiplication on $X_w^m(1)$ (it is the analog of the group $T_w^F$ for which $\dot{X}_{\dot{w}} \rar X_w$ is a torsor in the classical Deligne-Lusztig theory). The varieties $X_{\dot{w}}^m(1)$ and $X_w(1)$ turn out to be discrete unions of $k$-rational points, but are not finite themselves. Nevertheless, they will naturally decompose $G(F)$-eqiuvariantly as disjoint unions:  
\[
X_{\dot{w}}^m(1) = \coprod_{g \in G(F)/I_F} g.X_{\dot{w}}^m(1)_{P_{1/2}}, \quad X_w(1) = \coprod_{g \in G(F)/I_F} g.X_w(1)_{P_{1/2}}, 
\]
\noindent where $X_{\dot{w}}^m(1)_{P_{1/2}}$ and $X_w(1)_{P_{1/2}}$ will be finite subsets and moreover, the first will be a $\Gamma$-torsor over the second:
\[
X_{\dot{w}}^m(1)_{P_{1/2}} \stackrel{\Gamma}{\longrightarrow} X_w(1)_{P_{1/2}}. 
\]
\noindent The natural $\Gamma$-action on $X_{\dot{w}}^m(1)$ extends to an action of the bigger group $\widetilde{\Gamma}$, still commuting with the $G(F)$-action. The union $\widetilde{X}_{\dot{w}}^m(1)_{P_{1/2}}$ of $\widetilde{\Gamma}$-translates of $X_{\dot{w}}^m(1)_{P_{1/2}}$, will be stable under the action of $\iota(E^{\times})I_F \subseteq G(F)$, with $\iota$ as in Section \ref{sec:group_th_data}. To any $\overline{\bQ}_{\ell}$-representation $\tilde{\theta}$ of $\widetilde{\Gamma}$, there will correspond a $G(F)$-representation
\[ 
R_{\tilde{\theta}} = \coh_c^0(X_{\dot{w}}^m(1), \overline{\bQ}_{\ell})[\tilde{\theta}] = \cInd\nolimits_{E^{\times}I_F}^{G(F)} \coh_c^0(\widetilde{X}_{\dot{w}}^m(1)_{P_{1/2}}, \overline{\bQ}_{\ell})[\tilde{\theta}].
\]
\noindent Denote the $\iota(E^{\times})I_F$-representation $\coh_c^0(\widetilde{X}_{\dot{w}}^m(1)_{P_{1/2}}, \overline{\bQ}_{\ell})[\tilde{\theta}]$ by $\Xi_{\tilde{\theta}}$. If $R_{\tilde{\theta}}$ is a cuspidal representation, then $\Xi_{\tilde{\theta}}$ is (together with implicitly determined chain order) a \emph{cuspidal type} in the sense of \cite{BushnellH_06}\S 15.8. Further, the restriction of $\Xi_{\tilde{\theta}}$ to $I_F$ coincides with $\coh_c^0(X_{\dot{w}}^m(1)_{P_{1/2}})[\chi|_{\Gamma}]$.

The plan for the rest of Section \ref{sec:extended_ADLV_and_Gamma_tilde_etc} is as follows. We will determine the varieties $X_w(1)$ and $X_{\dot{w}}^m(1)$ in Section \ref{sec:structure_of_EADLV}. Then in Sections \ref{sec:group_acting_on_right} and \ref{sec:exotic_isom} we study the group $\widetilde{\Gamma}$ under some assumptions on $w$ and $m$. Finally, in Section \ref{sec:numerical_consideration} we make a numerical consideration which suggests how to choose $w$ and $m$ (relatively to each other) such that the representations $R_{\tilde{\theta}}$ get irreducible. In Section \ref{sec:ordinary_reps_of_GL2F_wild_ram_case} we will study those irreducible $R_{\tilde{\theta}}$'s.


\subsection{Structure of some extended affine Deligne-Lusztig varieties}\label{sec:structure_of_EADLV} 

\subsubsection{Iwahori level}

Let $w = \matzz{}{\unife^{-n}}{\unife^n}{} \in W_{\rm aff}$ with $\ell(w) = 2n-1$. Let 
\[
 \dot{v} = \matzz{}{\unife^{- ( \lfloor \frac{d+1}{2} \rfloor + \lfloor \frac{n+1}{2} \rfloor)}}{\unife^{ \lfloor \frac{d+1}{2} \rfloor + \lfloor \frac{n}{2} \rfloor}}{} 
 \]
\noindent and let $v$ denote the image of $\dot{v}$ in $W_{\rm aff}$. Then $\ell(v) = n+d$. Parametrize $C_v$ by $\bA_k^{n+d} \stackrel{\sim}{\rar} C_v$, $a = \sum_{i = 1}^{n+d} a_i \unife^i \mapsto e_-(a)\dot{v}I$. Note that the locus $a_1 \neq 0$ in $C_v$ is independent from the parametrization: intrinsically it is given as the set of alcoves in $C_v$, whose vertex of departure from $\iota(\cB_F)$ is $P_{1/2}$. We denote this locus by $C_v(a_1 \neq 0)$.

\begin{prop}\label{prop:eADLV_Iwahori_level} The Iwahori-level extended affine Deligne-Lusztig variety is given by the following $G(F)$-equivariant isomorphism, 
\[
X_w(1) \cong \coprod_{g \in G(F)/I_F} g.C_v(a_1 \neq 0).
\]
\end{prop}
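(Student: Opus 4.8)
The plan is to unwind the definition of $X_w(1)$ and reduce, via the $G(F)$-action on alcoves of $\cB_F$, to a single Schubert cell. Recall that $X_w(1) = \{ gI_E \in G(E)/I_E \colon g^{-1}\tau(g) \in I_E w I_E \}$. Since $I_E w I_E = C_w$ (the closed Schubert cell's underlying set in $G(E)/I_E$ is a disjoint union of cells; but the condition $g^{-1}\tau(g) \in I_E w I_E$ with $w$ of fixed length picks out a single cell), the first task is to see that the locus is concentrated along $C_v$ rather than $C_w$ itself: the twisted conjugation $g \mapsto g^{-1}\tau(g)$ translates the ``position $w$'' condition into a condition on which cell $g I_E$ itself lies in. Here the discriminant shift built into $\dot v$ — the exponents $\lfloor \frac{d+1}{2}\rfloor + \lfloor \frac{n+1}{2}\rfloor$ and $\lfloor \frac{d+1}{2}\rfloor + \lfloor \frac n2\rfloor$ — is exactly engineered so that if $gI_E \in C_v$ with parameter $a$, then $g^{-1}\tau(g)$ lands in $I_E w I_E$. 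So the first key step is a direct matrix computation: take $g = e_-(a)\dot v$, compute $g^{-1}\tau(g) = \dot v^{-1} e_-(\tau(a) - a)\dot v$, and check using Lemma~\ref{lm:trace_norm} (which gives $\ord_E(\tau(a)-a) = d+1$ when $a_1 \neq 0$) that conjugating the lower root element $e_-(\tau(a)-a)$ by $\dot v$ produces an element of $I_E w I_E$, i.e. of relative position $w$. This is the same computation already carried out in the proof of Proposition~\ref{prop:barbs}, now read in the other direction.

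The second step is to check the converse and the ``cell-independence'': any $gI_E$ satisfying the Deligne–Lusztig condition must lie in some translate $g'.C_v$ with $g' \in G(F)$, and within that translate must satisfy $a_1 \neq 0$. For this I would argue as follows. Given $gI_E \in X_w(1)$, the condition $g^{-1}\tau(g) \in I_EwI_E$ with $\ell(w) = 2n-1 > 0$ forces $gI_E \notin \cB_E^{\langle\tau\rangle}$ is false in general — rather, one uses that $g^{-1}\tau(g)$ being nontrivial mod $I_E$ says $gI_E$ is moved by $\tau$, hence by Proposition~\ref{prop:barbs} its distance to $\iota(\cB_F)$ is controlled, and in fact the precise position $w$ pins down the length of the minimal gallery from $gI_E$ to $\iota(\cB_F)$ to be $\ell(v) = n+d$. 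Using the transitive $G(F)$-action on alcoves of $\cB_F$ together with the bijection \eqref{eq:bijection_alcoves_vertices} and the claim proved inside Proposition~\ref{prop:barbs} (the vertex of departure is always a ``half-integral'' vertex, never a vertex of $\cB_F$), I can translate by a suitable $g' \in G(F)$ so that the vertex of departure of $gI_E$ from $\iota(\cB_F)$ becomes $P_{1/2}$; then $gI_E \in C_v$ and the departure-vertex being exactly $P_{1/2}$ is precisely the condition $a_1 \neq 0$, as noted in the text just before the proposition. The decomposition $X_w(1) = \coprod_{g \in G(F)/I_F} g.C_v(a_1 \neq 0)$ then follows once one checks the pieces are disjoint, which holds because distinct cosets $gI_F$ give alcoves of $\cB_F$ with disjoint sets of ``interior'' $\cB_E$-alcoves.

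The third, more bookkeeping step is $G(F)$-equivariance. The $G(F)$-action on $X_w(1)$ is by left multiplication $g_0 \cdot gI_E = g_0 g I_E$, and this clearly preserves the Deligne–Lusztig condition since $g_0 \in G(F)$ is $\tau$-fixed: $(g_0 g)^{-1}\tau(g_0 g) = g^{-1} g_0^{-1} \tau(g_0) \tau(g) = g^{-1}\tau(g)$. Under the identification with $\coprod_{g \in G(F)/I_F} g.C_v(a_1\neq 0)$ this is just permutation of the index set $G(F)/I_F$ by left translation together with the identity on each $C_v(a_1\neq 0)$ factor, so equivariance is immediate from the construction of the bijection in step two; I would state this but not belabor it.

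The main obstacle I anticipate is step two, specifically the passage from ``$gI_E$ satisfies the abstract twisted-conjugacy condition'' to ``$gI_E$ lies, up to $G(F)$, in $C_v(a_1 \neq 0)$'': one must be careful that the relative position $w$ of $g^{-1}\tau(g)$ exactly matches the gallery-distance $\ell(v) = n+d$ and that there are no spurious solutions in other cells $C_{v'}$ with $\ell(v') \neq n+d$ but with $v'^{-1}e_-(\ast)v'$ still landing in $I_E w I_E$ for some non-generic $\ast$. This is handled by the valuation computation $\ord_E(\tau(a)-a) = d+1$ being an \emph{exact} equality (from Lemma~\ref{lm:trace_norm}(ii), as $a_1 \neq 0$ forces the leading term to survive), which rigidifies the length count; but assembling this cleanly with the building-theoretic translation argument is where the real care is needed. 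Everything else is either a finite matrix manipulation in $\GL_2(E)$ or a formal consequence of the geometry of $\cB_F \subseteq \cB_E$ already set up in Section~\ref{sec:BTB} and Proposition~\ref{prop:barbs}.
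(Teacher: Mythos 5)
Your overall route is the one the paper intends: the paper's own proof is just a pointer to \cite{Ivanov_15_ram} Proposition 3.4, with the wild phenomenon (barbs) controlled by Proposition \ref{prop:barbs} and absorbed into the choice $\ell(v)=n+d$, and your steps one and three reproduce exactly that cell computation and the formal equivariance (the only slip in step one is harmless: $g^{-1}\tau(g)=\dot v^{-1}e_-(\tau(a)-a)\tau(\dot v)$, not $\dot v^{-1}e_-(\tau(a)-a)\dot v$, but $\dot v^{-1}\tau(\dot v)$ is a diagonal matrix with unit entries, hence in $I_E$, so the Iwahori double coset is unchanged). The genuine gap is in your step two: you treat points of $X_w(1)\subseteq G(E)/I_E$ as if they were alcoves of $\cB_E$, and cosets in $G(F)/I_F$ as if they were alcoves of $\cB_F$. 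Neither identification holds for $\GL_2$: the fiber of $G(E)/I_E$ over an alcove is a $\bZ$-torsor (the setwise stabilizer of $\underline{a}_0$ in $G(E)$ is generated by $I_E$, $Z(E)$ and $\matzz{0}{1}{\unife}{0}$, equivalently $\ord_E\circ\det$ separates components), and the $G(F)$-stabilizer of the base alcove of $\cB_F$, i.e.\ of $P_{1/2}$, is $\iota(E^{\times})I_F\supsetneq I_F$. Hence your disjointness claim ``distinct cosets $gI_F$ give alcoves of $\cB_F$ with disjoint interiors'' is false as stated ($g=\uniff$ and $g=\iota(\unife)$ fix the base alcove of $\cB_F$ but do not lie in $I_F$), and your covering argument only shows that, after translating the departure vertex to $P_{1/2}$, the \emph{underlying alcove} of $gI_E$ agrees with that of a point of $C_v(a_1\neq 0)$; the coset itself could be any $e_-(a)\dot v\unife^{k}I_E$ with $k\in\bZ$, of which only $k=0$ lies in $C_v$.

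To close the gap one has to track the $\bZ$-direction explicitly. Over an alcove with departure vertex $P_{1/2}$ and distance $n+d$, the fiber of $X_w(1)$ is exactly $\{e_-(a)\dot v\unife^{k}I_E : k\in\bZ\}$: the remaining cosets $e_-(a)\dot v\matzz{0}{1}{\unife}{0}^{2k+1}I_E$ have relative position the \emph{other} length-$(2n-1)$ element of $W_{\rm aff}$ and drop out. One then checks that left multiplication by $\iota(\unife)$, which stabilizes $P_{1/2}$, realizes precisely this central shift up to a point of $C_v(a_1\neq 0)$ (the Iwahori-level avatar of Lemma \ref{lm:action_of_beta_iota_unife} and Corollary \ref{cor:smallest_tilde_gamma_stable_subscheme}). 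With that, covering and disjointness follow by comparing both the departure vertex and the component $\ord_E(\det(\cdot))$, the $\bZ$-fiber being matched by $\iota(E^{\times})I_F/I_F$, and the index set $G(F)/I_F$ comes out as stated. In short, your alcove-only bookkeeping proves the $\PGL_2$-version of the statement; for $\GL_2$ the determinant-valuation direction must be accounted for, and that is exactly what your step two omits. The rest of your outline — the exact valuation $\ord_E(\tau(a)-a)=d+1$ for $a_1\neq 0$, the injectivity of (distance beyond the barbs) $\mapsto$ (relative position), the departure-vertex claim from Proposition \ref{prop:barbs}, and $G(F)$-equivariance — is sound and coincides with the cited argument.
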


\begin{proof} The proof is along the lines of \cite{Ivanov_15_ram} Proposition 3.4, the only difference being that $\iota(\cB_F) \subsetneq \cB_E^{\langle \tau \rangle}$. This difference is fully controlled by Proposition \ref{prop:barbs} and is reflected in the different choice of $v$ (in \cite{Ivanov_15_ram} one had $\ell(v) = n$, whereas here we have $\ell(v) = n + d$).
\end{proof}


\subsubsection{Higher levels}
Let 
\begin{equation}\label{eq:def_of_dot_w}
\dot{w} = \matzz{}{\unife^{-n}}{\unife^n}{} \matzz{\varepsilon^{\lfloor \frac{n}{2} \rfloor}}{}{}{\varepsilon^{-\lfloor \frac{n+1}{2} \rfloor}}
\end{equation}
\noindent be a lift of $w$ to $G(E)$. For $m \geq 0$, let $C_v^m$ be the preimage of the Schubert cell $C_v$ under $G(E)/I_E^{2m+1} \tar G(E)/I_E$. A parametrization of $C_v^m$ is given by 
\begin{eqnarray}
\psi_{\dot{v}}^m \colon \fp_E/\fp_E^{n + d + m + 1} \times (U_E/U_E^{m+1})^2 \times \fp_E/\fp_E^{m+1} &\stackrel{\sim}{\longrar}& C_v^m = I_E \dot{v} I_E/I_E^{2m+1} \nonumber \\ \label{eq:explicit_Cvm_param}
(a, C,D,B) &\mapsto& e_-(a) \dot{v} e_-(B)e_0(C,D)  I_E^{2m+1}.
\end{eqnarray}
The difference between this parametrization and the one in \cite{Ivanov_15_ram} Section 3.1.7 is that the variable $a$ here is equal to $a+\unife^{n+d+1}A$ from there. This simplifies the computations. For the next proposition recall the notation $(\fp_E^a/\fp_E^b)^{\ast}$ from Section \ref{sec:basic_notation}.

\begin{prop}\label{prop:eADLV_higher_level}
Let $m \geq 0$. There is a $G(F)$-- and $\Gamma$--equivariant isomorphism
\[
X_{\dot{w}}^m(1) \cong \coprod_{G(F)/I_F} g . X_{\dot{w}}^m(1)_{P_{1/2}},
\]
\noindent where $X_{\dot{w}}^m(1)_{P_{1/2}} \subseteq C_v^m$ is given in coordinates \eqref{eq:explicit_Cvm_param} by conditions
\begin{eqnarray*}
a &\in& (\fp_E/\fp_E^{n + d + m + 1})^{\ast} \\
D &=& \varepsilon^{\lfloor \frac{d+1}{2}\rfloor} \tau(C)R^{-1} \\
B &=& \unife^n R^{-1},
\end{eqnarray*}
\noindent where 
\begin{eqnarray*}
R &:=& \unife^{-(d+1)} (\tau(a) + a) \\
\end{eqnarray*}
is a well-defined element of $U_E/U_E^{m+1}$. In particular, $X_{\dot{w}}^m(1)_{P_{1/2}}$ is just a finite discrete set of $k$-rational points. 
\end{prop}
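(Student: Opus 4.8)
The plan is to follow the proof of \cite{Ivanov_15_ram} Proposition 3.4, while keeping track of the two features that are new here: the wild part of the ramification, carried by $\varepsilon = 1+\unife^d\varepsilon_0$, and the refinement from Iwahori level $I_E$ to level $I_E^{2m+1}$.

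\emph{Reduction to the fibre over $P_{1/2}$.} The covering $G(E)/I_E^{2m+1}\twoheadrightarrow G(E)/I_E$ restricts to a $G(F)$-equivariant map $X_{\dot w}^m(1)\to X_w(1)$, since $I_E^{2m+1}\dot w I_E^{2m+1}\subseteq I_E w I_E$. The finite group $\Gamma = \widetilde\Gamma_{\dot w}\cap I_E/I_E^{2m+1}$ acts on $X_{\dot w}^m(1)$ by right translation; by its very definition this preserves $X_{\dot w}^m(1)$, it commutes with the $G(F)$-action, and it acts trivially on the quotient $G(E)/I_E$, hence on $X_w(1)$. By Proposition \ref{prop:eADLV_Iwahori_level}, $X_w(1) = \coprod_{g\in G(F)/I_F} g.C_v(a_1\neq 0)$, so setting $X_{\dot w}^m(1)_{P_{1/2}}$ to be the preimage of $C_v(a_1\neq 0)$ one obtains at once the $G(F)$- and $\Gamma$-equivariant decomposition $X_{\dot w}^m(1) = \coprod_{g\in G(F)/I_F} g.X_{\dot w}^m(1)_{P_{1/2}}$. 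It remains to compute, for $g = \psi_{\dot v}^m(a,C,D,B) = e_-(a)\dot v e_-(B)e_0(C,D)$ with $a\in(\fp_E/\fp_E^{n+d+m+1})^{\ast}$, the condition $g^{-1}\tau(g)\in I_E^{2m+1}\dot w I_E^{2m+1}$.

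\emph{The matrix computation.} I would compute $h := g^{-1}\tau(g)$ explicitly as a product of elementary matrices, using that $\tau$ fixes $k$ and $\tau(\unife) = \varepsilon\unife$ (so that $\tau(\dot v) = \diag(\varepsilon^{-j},\varepsilon^{k})\,\dot v$ with $j = \tfrac{d+1}{2}+\lfloor\tfrac{n+1}{2}\rfloor$, $k = \tfrac{d+1}{2}+\lfloor\tfrac n2\rfloor$, $j+k = n+d+1$), the relation $\varepsilon\equiv 1\bmod\fp_E^d$, and the characteristic-$2$ simplifications $e_-(x)^{-1}=e_-(x)$ and $e_-(x)e_-(y)=e_-(x+y)$. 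Conjugation past $\dot v$ shifts valuations of off-diagonal entries by $\pm(n+d+1)$; recalling $I_E^{2m+1} = \matzz{1+\fp_E^{m+1}}{\fp_E^m}{\fp_E^{m+1}}{1+\fp_E^{m+1}}$, the condition $h\in I_E^{2m+1}\dot w I_E^{2m+1}$ — which, since $h\in I_E w I_E$ is already known at Iwahori level, only concerns the class of $h$ in the finite set $I_E^{2m+1}\backslash I_E w I_E/I_E^{2m+1}$ — unwinds into a short list of congruences on $(a,C,D,B)$. Matching the $e_-$-component of $h$ against that of $\dot w$ recovers $a_1\neq 0$ (this is precisely the $P_{1/2}$-locus) and, exactly as in the proof of Proposition \ref{prop:barbs}, forces $\ord_E(\tau(a)+a) = d+1$, so that $R = \unife^{-(d+1)}(\tau(a)+a)\in U_E$; as $a$ is defined only modulo $\fp_E^{n+d+m+1}$ and $n\geq 1$, the element $R$ is well-defined in $U_E/U_E^{m+1}$. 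Matching the diagonal component against $\diag(\varepsilon^{\lfloor n/2\rfloor},\varepsilon^{-\lfloor(n+1)/2\rfloor})$ then yields $D = \varepsilon^{\lfloor(d+1)/2\rfloor}\tau(C)R^{-1}$, and the lower-left component yields $B = \unife^n R^{-1}$; conversely one checks these three equations are also sufficient. Thus $X_{\dot w}^m(1)_{P_{1/2}}$ is cut out by the displayed conditions, and it is parametrized by $(a,C)\in(\fp_E/\fp_E^{n+d+m+1})^{\ast}\times(U_E/U_E^{m+1})$, hence is a finite discrete set of $k$-rational points.

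\emph{Main obstacle.} The hard part is the middle step: organizing the computation of $h = g^{-1}\tau(g)$ and the accompanying congruence bookkeeping so that the single group-theoretic condition $h\in I_E^{2m+1}\dot w I_E^{2m+1}$ splits cleanly into exactly the three stated equations and nothing more. The delicate point is that the wild factor $\varepsilon = 1+\unife^d\varepsilon_0$ contributes cross terms precisely at ``depth $d$'', which must be balanced against the floor-function exponents occurring in $\dot v$, in $\dot w$, and in the level $I_E^{2m+1}$; the reparametrization of $a$ (absorbing the auxiliary variable $A$ of \cite{Ivanov_15_ram} Section 3.1.7 into $a$) is what keeps the computation tractable.
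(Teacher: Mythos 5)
Your proposal is correct and takes essentially the same route as the paper's own (sketched) proof: the decomposition is read off from Proposition \ref{prop:eADLV_Iwahori_level} together with the projection $X_{\dot w}^m(1)\to X_w(1)$, the well-definedness of $R$ comes from Lemma \ref{lm:trace_norm}, and the three equations are obtained by computing $x^{-1}\tau(x)$ in the coordinates \eqref{eq:explicit_Cvm_param} and comparing $I_E^{2m+1}$-double cosets. The bookkeeping you flag as the main obstacle is handled in the paper by the single auxiliary identity $\dot v^{-1}e_-(a)e_-(\tau(a))\tau(\dot v)=e_-(\unife^{n}R^{-1})\,\dot w\, e_0\bigl(\varepsilon^{\frac{d+1}{2}}R^{-1},\varepsilon^{-\frac{d+1}{2}}R\bigr)\,e_-(\varepsilon^{n+d+1}\unife^{n}R^{-1})$, after which the conditions on $B$, $C$, $D$ drop out exactly as you predict.
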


\begin{proof} The same statement for a tamely ramified $E/F$ and with respect to another parametrization was shown in \cite{Ivanov_15_ram}  Theorem 3.8. The proof remains the same. For convenience, we sketch the proof, including the key computation (which gets significantly easier with the parametrization used here). The statement about the decomposition into a disjoint union is clear from Proposition \ref{prop:eADLV_Iwahori_level} and the fact that $X_{\dot{w}}^m(1)$ lies over $X_w(1)$. The well-definedness of $R$ follows from Lemma \ref{lm:trace_norm}(i).  We make an auxiliary computation,
\[
\dot{v}^{-1} e_-(a) e_-(\tau(a)) \tau(\dot{v}) = e_-(\unife^nR^{-1}) \dot{w} e_0(\varepsilon^{\frac{d+1}{2}} R^{-1}, \varepsilon^{-\frac{d+1}{2}}R) e_-(\varepsilon^{n+d+1}\unife^nR^{-1}).
\]
Using it we compute for $x \in C_v^m$ with coordinates $a,C,D,B$,
\begin{eqnarray*}
x^{-1} \tau(x) &\sim& e_0(C^{-1},D^{-1}) e_-(B) \dot{v}^{-1} e_-(a) e_-(\tau(a)) \tau(\dot{v}) e_-(\tau(B)) e_0(\tau(C),\tau(D)) \\
&\sim& e_0(C^{-1},D^{-1}) e_-(B + \unife^nR^{-1}) \dot{w} \dots \\
 &\quad& \dots e_0(\varepsilon^{\frac{d+1}{2}} R^{-1}, \varepsilon^{-\frac{d+1}{2}}R) e_-(\tau(B + \unife^nR^{-1} )) e_0(\tau(C),\tau(D)) \\
&\sim& e_-(CD^{-1}(B + \unife^n R^{-1})) \dot{w} e_0(\varepsilon^{\frac{d+1}{2}} R^{-1}D^{-1}\tau(C), \varepsilon^{-\frac{d+1}{2}}RC^{-1}\tau(D)) \dots \\
 &\quad& \dots e_-(\tau(CD^{-1}(B + \unife^n R^{-1}))),
\end{eqnarray*}
\noindent where we write $\sim$ to indicate that two elements belong to the same $I_E^{2m+1}$-double coset. Thus the condition $I_E^{2m+1} x^{-1}\tau(x)I_E^{2m+1} = I_E^{2m+1} \dot{w} I_E^{2m+1}$ is equivalent to the three conditions $B + \unife^n R^{-1} = 0$, $\varepsilon^{\frac{d+1}{2}} R^{-1}D^{-1}\tau(C) = 1$ and $\varepsilon^{-\frac{d+1}{2}}RC^{-1}\tau(D) = 1$. The proposition follows. 
\end{proof}

Recall the embedding $\iota$ from Section \ref{sec:group_th_data}.
\begin{cor}\label{cor:smallest_tilde_gamma_stable_subscheme}
The smallest $\widetilde{\Gamma}$--stable subscheme $\widetilde{X}_{\dot{w}}^m(1)_{P_{1/2}}$ of $X_{\dot{w}}^m(1)$, containing $X_{\dot{w}}^m(1)_{P_{1/2}}$ is $\iota(E)^{\times} I_F$-stable. There is a $G(F)$-- and $\widetilde{\Gamma}$--equivariant isomorphism
\[
X_{\dot{w}}^m(1) \cong \coprod_{G(F)/\iota(E^{\times})I_F} g . \widetilde{X}_{\dot{w}}^m(1)_{P_{1/2}},
\]
\end{cor}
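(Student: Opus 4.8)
The plan is to promote the $G(F)$- and $\Gamma$-equivariant decomposition of Proposition \ref{prop:eADLV_higher_level} to a $\widetilde{\Gamma}$-equivariant one by understanding how the two commuting right actions --- that of $\widetilde{\Gamma} \subseteq Z(E)I_E/I_E^{2m+1}$ and the left action of $G(F)$, in particular of $\iota(E^\times)I_F$ --- interact on the pieces $g.X_{\dot{w}}^m(1)_{P_{1/2}}$. First I would note that $\widetilde{X}_{\dot{w}}^m(1)_{P_{1/2}}$, being by definition the smallest $\widetilde{\Gamma}$-stable subset containing $X_{\dot{w}}^m(1)_{P_{1/2}}$, is simply the union $\bigcup_{t \in \widetilde{\Gamma}} X_{\dot{w}}^m(1)_{P_{1/2}}.t^{-1}$, where each translate is again a finite discrete set of $k$-points. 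Since $\widetilde{\Gamma}$ acts on $G(E)/I_E^{2m+1}$ by right multiplication and $\Gamma = \widetilde{\Gamma} \cap I_E/I_E^{2m+1}$ stabilizes the Schubert cell $C_v^m$, the translates by $t \in \widetilde{\Gamma} \setminus \Gamma$ land in other Schubert cells $C_{v'}^m$; the point is to identify where they go and to check that they are accounted for by the left $\iota(E^\times)$-action.

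The key step is the $\iota(E^\times)I_F$-stability of $\widetilde{X}_{\dot{w}}^m(1)_{P_{1/2}}$. Here I would use the compatibility of the embedding $\iota$ with the apartment $\mathscr{A}_E$ and the explicit form of $\iota(\unife) = \dot{w}$-type elements: multiplication on the left by $\iota(E^\times)$ permutes the vertices of $\mathscr{A}_E$ lying over vertices of $\mathscr{B}_F$ but fixing the vertex $P_{1/2}$ up to the $\iota(U_E)$-part, and the residual action matches, after choosing representatives, the right $\widetilde{\Gamma}/\Gamma$-action by $Z(E)I_E/I_E^{2m+1}$ modulo $\Gamma$. Concretely, for $x \in X_{\dot{w}}^m(1)_{P_{1/2}}$ and $z \in \iota(E^\times)$, one has $z.x = x'.t$ for a suitable $x' \in X_{\dot{w}}^m(1)_{P_{1/2}}$ and $t \in \widetilde{\Gamma}$: this follows by a direct manipulation $z x I_E^{2m+1} = x (x^{-1} z x) I_E^{2m+1}$ together with the observation that $x^{-1}zx$ normalizes (mod $I_E^{2m+1}$) the relevant double coset because $z$ is $\tau$-fixed and $x$ satisfies the defining Deligne-Lusztig condition; hence $x^{-1}zx$ lies in a coset realizing an element of $\widetilde{\Gamma}_{\dot{w}}$. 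Since $I_F \subseteq I_E$ acts through $\Gamma$ already by Proposition \ref{prop:eADLV_higher_level}, this gives $\iota(E^\times)I_F$-stability.

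Granting that stability, the decomposition follows formally: $X_{\dot{w}}^m(1)$ is the disjoint union over $G(F)/I_F$ of translates of $X_{\dot{w}}^m(1)_{P_{1/2}}$ by Proposition \ref{prop:eADLV_higher_level}; grouping the $I_F$-cosets inside each $\iota(E^\times)I_F$-coset and using that $\iota(E^\times)I_F$ carries $X_{\dot{w}}^m(1)_{P_{1/2}}$ onto (a subset of, hence by minimality exactly) $\widetilde{X}_{\dot{w}}^m(1)_{P_{1/2}}$, one obtains $X_{\dot{w}}^m(1) = \coprod_{G(F)/\iota(E^\times)I_F} g.\widetilde{X}_{\dot{w}}^m(1)_{P_{1/2}}$. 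That this isomorphism is $G(F)$-equivariant is built into the construction; $\widetilde{\Gamma}$-equivariance holds because $\widetilde{\Gamma}$ acts only within each piece $g.\widetilde{X}_{\dot{w}}^m(1)_{P_{1/2}}$ by right multiplication, commuting with the left $G(F)$-action, and $\widetilde{X}_{\dot{w}}^m(1)_{P_{1/2}}$ is $\widetilde{\Gamma}$-stable by definition.

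The main obstacle I anticipate is the bookkeeping in the key step: making precise the claim that left multiplication by $\iota(E^\times)$ on the base point $P_{1/2}$ is absorbed, on the level of the parametrized sets of Proposition \ref{prop:eADLV_higher_level}, by a right translation in $\widetilde{\Gamma}$ --- i.e. matching the ``geometric'' translation of vertices in $\mathscr{A}_E$ with the ``group-theoretic'' $\widetilde{\Gamma}/\Gamma \cong$ (a quotient of $E^\times$) action. This requires unwinding the identification $\widetilde{\Gamma}_{\dot{w}} \subseteq Z(E)I_E/I_E^{2m+1}$ and checking that for $z = \iota(\unife)$ the conjugate $x^{-1}zx$ indeed represents a nontrivial class in $\widetilde{\Gamma}_{\dot{w}}/\Gamma$ rather than leaving the union of $\widetilde{\Gamma}$-translates of the Schubert cell; the oddness of $d$ and the congruence $\varepsilon \equiv 1 \bmod \fp_E^d$ from Section \ref{sec:basic_notation} should make this work out, much as they did in the proof of Proposition \ref{prop:barbs}.
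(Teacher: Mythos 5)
Your reduction has the right shape (it is the paper's: $I_F$-stability is formal since the left and right actions commute, $\widetilde{\Gamma}$ is generated by $\Gamma$ and $i(\unife,0)$, and the coproduct decomposition then follows by regrouping $I_F$-cosets), but the one step that actually needs an argument --- stability of $\widetilde{X}_{\dot{w}}^m(1)_{P_{1/2}}$ under left multiplication by $\iota(\unife)$ --- is justified by a mechanism that fails. You write $z.x=x.(x^{-1}zx)$ and claim that $x^{-1}zx$ represents an element of $\widetilde{\Gamma}_{\dot{w}}\subseteq Z(E)I_E/I_E^{2m+1}$. Take $z=\iota(\unife)$ and a point $x$ of $X_{\dot{w}}^m(1)_{P_{1/2}}$ with representative $e_-(a)\dot{v}\cdot j$, $j\in I_E$, $\ord_E(a)=1$. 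Since $Z(E)I_E$ is stable under conjugation by $I_E$, the question reduces to the explicit conjugate
\[
\bigl(e_-(a)\dot{v}\bigr)^{-1}\,\iota(\unife)\,e_-(a)\dot{v}
=\matzz{a}{(a\Delta+a^2+\uniff)\,\unife^{-(n+d+1)}}{\unife^{\,n+d+1}}{\Delta+a}.
\]
For generic $a$ (e.g.\ $a=c\unife$ with $c\in k\sm\{0,1\}$, where $a\Delta+a^2+\uniff=(c+1)\Delta\unife+(c+1)^2\unife^2$ has valuation $2$) the upper-right entry has valuation $1-n-d<0$ while both diagonal entries have valuation $1$; after extracting any central power of $\unife$ the matrix is not in $I_E$, so $x^{-1}\iota(\unife)x\notin Z(E)I_E$ and a fortiori cannot represent an element of $\widetilde{\Gamma}$. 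Moreover, the reason you offer --- ``$z$ is $\tau$-fixed and $x$ satisfies the Deligne--Lusztig condition'' --- only shows that $(zx)^{-1}\tau(zx)=x^{-1}\tau(x)$, i.e.\ that the left action preserves $X_{\dot{w}}^m(1)$ (which was already known); it gives no control whatsoever on $x^{-1}zx$ or on which translate of $X_{\dot{w}}^m(1)_{P_{1/2}}$ the point $zx$ lies in.

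What is true, and what the paper proves, is the weaker statement $\iota(\unife).x=x'.\,i(\unife,0)$ with $x'\in X_{\dot{w}}^m(1)_{P_{1/2}}$ a \emph{different} point: one needs the explicit factorization $\iota(\unife)\,e_-(a)\,e_0(\varepsilon\unife,\unife)^{-1}=e_-\bigl(\varepsilon\unife/(a^{\prime}+1+\varepsilon)\bigr)e_0(\cdot,\cdot)e_+(\ast)$ of Lemma \ref{lm:action_of_beta_iota_unife} (cf.\ Proposition \ref{prop:all_actions}), which shows that left multiplication by $\iota(\unife)$ agrees with right multiplication by $i(\unife,0)$ only up to the nontrivial automorphism $\beta_{\iota(\unife)}$ of $X_{\dot{w}}^m(1)_{P_{1/2}}$ (the coordinate $a$ genuinely moves). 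With that computation in hand, your remaining bookkeeping --- $\iota(\unife)^{\bZ}$-translates exhausting $\widetilde{X}_{\dot{w}}^m(1)_{P_{1/2}}$, disjointness, and $G(F)$- and $\widetilde{\Gamma}$-equivariance --- goes through and matches the paper's argument; but as written, the proposal replaces the single computation the proof actually requires by a conjugation argument that is false.
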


\begin{proof}
As $X_{\dot{w}}^m(1)_{P_{1/2}}$ is $I_F$-stable and the left and right actions commute, it is clear that $\widetilde{X}_{\dot{w}}^m(1)_{P_{1/2}}$ is $I_F$-stable. It is enough to show that $\widetilde{X}_{\dot{w}}^m(1)_{P_{1/2}}$ is stable under the left multiplication with $\iota(\unife)$. This is a matter of a direct computation (cf. Proposition \ref{prop:all_actions}(ii))
\end{proof}

\begin{rem}\label{rem:suffices_to_look_at_special_choices}
Let $\dot{w}$ and $w$ be as in \eqref{eq:def_of_dot_w}. Let $\dot{w}_1$ be a second lift of $w$ to $G(E)$. If the cosets $I_E^{2m+1} \dot{w} I_E^{2m+1}$, $I_E^{2m+1} \dot{w}_1 I_E^{2m+1}$ are not $\tau$-conjugate by $I_E/I_E^{2m+1}$, $X_{\dot{w}_1}^m(1) = \emptyset$. If they are $\tau$-conjugate, then a conjugating element defines (by right multiplication) an isomorphism $X_{\dot{w}}^m(1) \stackrel{\sim}{\rar} X_{\dot{w}_1}^m(1)$. Thus the special choices $\dot{w}$ as in \eqref{eq:def_of_dot_w} already cover all interesting cases.
\end{rem}


\subsection{The group acting on the right} \label{sec:group_acting_on_right} 

Recall that $Z$ denotes the center of $G$. The group $\widetilde{\Gamma} \subseteq Z(E)I_E/I_E^{2m+1}$ is the stabilizer of the double coset $I_E^{2m+1} \dot{w} I_E^{2m+1}$ for the action of $Z(E)I_E/I_E^{2m+1}$ on the set of all $I_E^{2m+1}$-double cosets lying in $I_EwI_E$, given by $(i,I_E^{2m+1}\dot{w}I_E^{2m+1}) \mapsto I_E^{2m+1} i^{-1} \dot{w} \tau(i) I_E^{2m+1}$.   This is well-defined as $I_E^{2m+1}$ is normal in $I_E$. We also need the subgroup $\Gamma := \widetilde{\Gamma} \cap I_E/I_E^{2m+1}$.  For $t \in E^{\times}/U_E^{m+1}$ and $r \in \caO_E/\fp_E^m$, write

\begin{eqnarray} 
P_r &:=& 1 + \varepsilon^n\unife^{2n}r \tau(r) \in \caO_E/\fp_E^{m+1} \\ 
i(t,r) &:=& \matzz{1}{r}{}{1} \matzz{1}{}{\varepsilon^n\unife^{2n}\tau(r)P_r^{-1}}{1} \matzz{t}{}{}{\tau(t)P_r^{-1}} \in I_E/I_E^{2m+1}
\end{eqnarray}

\begin{lm}\label{lm:structure_of_Gamma}
Let $\dot{w}$ be as in \eqref{eq:def_of_dot_w}. We have
\[
\widetilde{\Gamma} = \{ i(t,r) \colon t \in E^{\times}/U_E^{m+1}, r \in \caO_E/\fp_E^m \} \subseteq Z(E)I_E/I_E^{2m+1},
\]
\noindent and $\Gamma$ is the subgroup given by the condition $t\in U_E$.
\end{lm}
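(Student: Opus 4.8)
The plan is to compute the stabilizer $\widetilde{\Gamma}$ explicitly by writing a general element $i \in Z(E)I_E/I_E^{2m+1}$ in a convenient normal form and analyzing when the $\tau$-conjugation condition $I_E^{2m+1}\,i^{-1}\dot w\,\tau(i)\,I_E^{2m+1} = I_E^{2m+1}\,\dot w\,I_E^{2m+1}$ holds. First I would use the Iwahori (Bruhat) factorization modulo $I_E^{2m+1}$: every element of $Z(E)I_E/I_E^{2m+1}$ can be written (uniquely, given suitable ranges) as a product $e_+(r)\,e_-(s)\,e_0(t_1,t_2)$ with $r \in \caO_E/\fp_E^m$, $s \in \fp_E/\fp_E^{m+1}$, and $(t_1,t_2) \in (E^\times/U_E^{m+1})^2$ subject to the central twist; actually, since we only care about the coset $I_E^{2m+1}\dot w I_E^{2m+1}$ and $\dot w$ already contains the ``long element'' $\matzz{}{\unife^{-n}}{\unife^n}{}$, it is cleaner to absorb the lower-triangular part into a left factor, i.e. parametrize by $e_+(r)\,e_0(t,t')$ up to the allowed double-coset ambiguity. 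The candidate $i(t,r)$ given in the statement is exactly such a normal form (with the factor $P_r$ inserted precisely so that the determinant/centre bookkeeping works out), so one direction is to verify directly that for every $(t,r)$ the element $i(t,r)$ stabilizes $I_E^{2m+1}\dot w I_E^{2m+1}$.

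The key computation is the conjugation $i(t,r)^{-1}\dot w\,\tau(i(t,r))$. I would break $i(t,r)$ into its three displayed factors, push each through $\dot w$ using the relations $\dot w^{-1} e_-(x)\dot w = e_+(\unife^{\pm} \cdot x)$-type identities together with the explicit form \eqref{eq:def_of_dot_w} of $\dot w$ (which conjugates the diagonal torus by inversion-and-twist-by-$\varepsilon$), and collect terms. Lemma~\ref{lm:trace_norm}(ii) is what guarantees that $\tau(t)/t \in U_E^d$ and hence that the ``error terms'' produced by replacing $t$ by $\tau(t)$ land in $I_E^{2m+1}$ (using that $2m+1$ is within the relevant range and $d>0$); this is the same mechanism that made $R \in U_E/U_E^{m+1}$ well-defined in Proposition~\ref{prop:eADLV_higher_level}. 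The role of $P_r = 1 + \varepsilon^n\unife^{2n}r\tau(r)$ is to exactly cancel the cross-term arising from moving the $e_+(r)$ factor past $\dot w$ against its $\tau$-twist $e_+(\tau(r))$; so after the dust settles one finds $i(t,r)^{-1}\dot w\,\tau(i(t,r)) \in I_E^{2m+1}\dot w I_E^{2m+1}$, proving $\{i(t,r)\} \subseteq \widetilde{\Gamma}$.

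For the reverse inclusion $\widetilde{\Gamma} \subseteq \{i(t,r)\}$ I would argue by a counting/surjectivity argument rather than re-solving the equations from scratch. One knows $X_{\dot w}^m(1)_{P_{1/2}}$ from Proposition~\ref{prop:eADLV_higher_level} and that $\widetilde{\Gamma}$ acts on it; in fact the natural map $\widetilde{\Gamma} \to \{I_E^{2m+1}\text{-cosets in } I_EwI_E\}$, $i \mapsto I_E^{2m+1}i^{-1}\dot w\tau(i)I_E^{2m+1}$ has image the single point $I_E^{2m+1}\dot w I_E^{2m+1}$ precisely on $\widetilde{\Gamma}$, so it suffices to show that modulo the subgroup already accounted for by $\{i(t,r)\}$ there is nothing left. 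Concretely: a general element of $Z(E)I_E/I_E^{2m+1}$ has the form $z\cdot e_+(r)e_-(s)e_0(u_1,u_2)$ with $z$ central; the parameters $(r,t:=u_1\cdot(\text{something}))$ exhaust the $i(t,r)$'s, and one checks that the remaining freedom in $s$ (the lower-triangular coordinate) and in the ratio $u_1/u_2$ modulo the torus twist is killed by the stabilizer condition — this is the same three-equation analysis as in the proof of Proposition~\ref{prop:eADLV_higher_level}, now read as constraints on the right-acting element instead of on the point. The statement about $\Gamma$ is then immediate: $\Gamma = \widetilde\Gamma \cap I_E/I_E^{2m+1}$, and $i(t,r) \in I_E$ iff its diagonal entry $t$ is a unit (the off-diagonal entries and $P_r$ are automatically integral), i.e. iff $t \in U_E$.

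The main obstacle I anticipate is the bookkeeping in the forward computation: keeping careful track of which cross-terms and twist-by-$\varepsilon$ factors actually land in $I_E^{2m+1}$ requires matching up floor functions $\lfloor n/2\rfloor$, $\lfloor (n+1)/2\rfloor$, $\lfloor (d+1)/2\rfloor$ with valuations, exactly as in the $R$-computation of Proposition~\ref{prop:eADLV_higher_level}, and verifying that $P_r^{-1}$ (which only makes sense because $\varepsilon^n\unife^{2n}r\tau(r)$ has positive valuation) does the cancellation on the nose. The cleanest route is probably to reduce, via Remark~\ref{rem:suffices_to_look_at_special_choices} and the explicit auxiliary identity $\dot v^{-1}e_-(a)e_-(\tau(a))\tau(\dot v) = e_-(\unife^nR^{-1})\dot w\,e_0(\ldots)\,e_-(\ldots)$ already recorded in the proof of Proposition~\ref{prop:eADLV_higher_level}, to a computation one has essentially done before, so that Lemma~\ref{lm:structure_of_Gamma} becomes a corollary of that proposition's internal calculation rather than an independent grind.
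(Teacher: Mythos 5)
Your proposal is correct and is essentially the paper's argument: the paper's entire proof is the remark that this is ``a straightforward computation with $2\times 2$-matrices'', i.e.\ precisely the Iwahori-factorization computation you describe (write a general element of $Z(E)I_E/I_E^{2m+1}$ as $e_+(r)e_-(s)e_0(t_1,t_2)$, push the $\tau$-twisted factors through $\dot{w}$ using $\dot{w}e_+(x)\dot{w}^{-1}=e_-(\varepsilon^n\unife^{2n}x)$ and its counterpart, solve the resulting constraints for $s$ and $t_2$ in terms of $r$ and $t_1$, and read off the unit condition on $t$ for $\Gamma$). One minor simplification you could note: the cancellation produced by $P_r$ is exact, namely $i(t,r)^{-1}\dot{w}\,\tau(i(t,r))=\dot{w}$ on the nose, so for the forward inclusion no approximation via Lemma \ref{lm:trace_norm}(ii) or congruences modulo $I_E^{2m+1}$ is actually required.
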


\begin{proof}
The proof is a straightforward computation with $2 \times 2$-matrices.
\end{proof}

Now we study $\widetilde{\Gamma}$ under the assumptions $m \leq 2n + d - 1$ and $2n > d$. The first assumption is justified by considerations in Section \ref{sec:numerical_consideration}. The second assumption makes the structure of $\widetilde{\Gamma}$ more simple. For $x$ an element of a subquotient of $E$, we denote the scalar $2\times 2$-matrix with diagonal entries equal $x$, again by $x$. For elements $x,y$ of a group, we write $[x,y] := xyx^{-1}y^{-1}$ for their commutator.

\begin{lm}\label{lm:direct_computations} Assume $m \leq 2n + d - 1$ and $2n > d$. Let $r,u \in \caO_E/\fp_E^m$, $t \in E^{\times}/U_E^{m+1}$. Then 
\begin{itemize}
\item[(i)] We have $P_r^{-1} = \tau(P_r) = P_r = 1 + \varepsilon^n \unife^{2n} r^2$ and $P_r P_u = P_{r+u}$.
\item[(ii)]  We have 
\[
i(1,r) = P_r \matzz{1}{r}{\varepsilon^n \unife^{2n}r}{1}, \quad i(1,r)^{-1} = \matzz{1}{r}{\varepsilon^n \unife^{2n}r}{1}.
\]
\item[(iii)] The elements $i(1,r)$, $i(1,u)$ of $\Gamma$ commute.
\item[(iv)] The commutator of $i(t,0)$ and $i(1,r)$ is
\[
[i(t,0), i(1,r)] = \matzz{1}{(1 + t\tau(t)^{-1})r}{0}{1}.
\]
\end{itemize}
\end{lm}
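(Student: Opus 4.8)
The plan is to reduce everything to explicit $2\times2$-matrix multiplication modulo $I_E^{2m+1}$, using the normality of $I_E^{2m+1}$ in $I_E$ and Lemma \ref{lm:trace_norm} to discard higher-order terms. First I would prove (i): since $E$ has characteristic $2$, $r\tau(r) = r^2 + r(\tau(r)+r)$ and by Lemma \ref{lm:trace_norm}(ii) we have $\tau(r) + r = r(r^{-1}\tau(r)+1)$ lying in $\fp_E^d$ (after writing $r$ multiplicatively, or directly $r^{-1}\tau(r)-1\in U_E^d$ when $r\in U_E$, and treating $r\in\fp_E$ separately); hence $\varepsilon^n\unife^{2n}r\tau(r) \equiv \varepsilon^n\unife^{2n}r^2 \bmod \fp_E^{2n+d}$, and the assumption $m \leq 2n+d-1$, i.e. $2n+d \geq m+1$, makes this an equality in $\caO_E/\fp_E^{m+1}$. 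The identities $P_r = 1 + \varepsilon^n\unife^{2n}r^2$, $\tau(P_r)=P_r$ (because $\tau(\varepsilon^n\unife^{2n}) = \varepsilon^{-n}\unife^{2n}$ and one checks $\varepsilon^{-n}\tau(r)^2 \equiv \varepsilon^n r^2$ again modulo a sufficiently high power — here $2n > d$ is used so that the discrepancy $\varepsilon^n - \varepsilon^{-n}$, of valuation $\geq d$, times $\unife^{2n}$ has valuation $\geq 2n+d \geq m+1$), and $P_r^{-1} = P_r$ (since $P_r^2 = 1 + \varepsilon^{2n}\unife^{4n}r^4$ and $4n \geq 2n+d+1 > m+1$ under $2n>d$) all follow. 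Multiplicativity $P_rP_u = P_{r+u}$ is then immediate from $(r+u)^2 = r^2+u^2$ in characteristic $2$, after noting the cross term in $P_rP_u$, of valuation $\geq 4n$, vanishes.

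Next, (ii) is a direct multiplication of the three matrices defining $i(1,r)$ (with $t=1$): using (i), $P_r^{-1} = P_r$ and $\tau(r)P_r^{-1}$ can be replaced; one gets
\[
i(1,r) = \matzz{1}{r}{0}{1}\matzz{1}{0}{\varepsilon^n\unife^{2n}\tau(r)P_r}{1}\matzz{1}{0}{0}{P_r} = \matzz{P_r + \varepsilon^n\unife^{2n}r\tau(r)P_r}{rP_r}{\varepsilon^n\unife^{2n}\tau(r)P_r}{P_r}.
\]
The $(1,1)$-entry is $P_r(1 + \varepsilon^n\unife^{2n}r\tau(r)) = P_r\cdot P_r = 1$ by (i), and replacing $\tau(r)$ by $r$ in the off-diagonal entries (legitimate modulo $\fp_E^{2n+d}$, absorbed into $I_E^{2m+1}$ since the relevant matrix entry lives modulo $\fp_E^{\lfloor n/2\rfloor}$... more precisely, the error sits in the lower-left slot which is read modulo $\fp_E^{m+1}$, and $2n+d \geq m+1$) gives the stated form; $i(1,r)^{-1}$ is then read off since $P_r \equiv 1 + (\text{valuation} \geq 2n)$ acts trivially where needed, or computed directly as the inverse of $\matzz{1}{r}{\varepsilon^n\unife^{2n}r}{1}$, whose determinant is $P_r$.

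For (iii), I would multiply $i(1,r)i(1,u)$ and $i(1,u)i(1,r)$ using the form from (ii): each is $P_r P_u$ times $\matzz{1}{r}{\varepsilon^n\unife^{2n}r}{1}\matzz{1}{u}{\varepsilon^n\unife^{2n}u}{1}$, and the latter product is $\matzz{1+\varepsilon^n\unife^{2n}ru}{r+u}{\varepsilon^n\unife^{2n}(r+u)}{1+\varepsilon^n\unife^{2n}ru}$, visibly symmetric in $r,u$; so the two products agree. For (iv), I compute $[i(t,0),i(1,r)] = i(t,0)\,i(1,r)\,i(t,0)^{-1}\,i(1,r)^{-1}$ with $i(t,0) = \matzz{t}{0}{0}{\tau(t)}$ (note $P_0 = 1$); conjugating $i(1,r) = P_r\matzz{1}{r}{\varepsilon^n\unife^{2n}r}{1}$ by the diagonal matrix replaces $r \mapsto t\tau(t)^{-1}r$ in the upper-right and $r \mapsto \tau(t)t^{-1}r$ in the lower-left, giving $P_r\matzz{1}{t\tau(t)^{-1}r}{\varepsilon^n\unife^{2n}\tau(t)t^{-1}r}{1}$; multiplying by $i(1,r)^{-1} = \matzz{1}{r}{\varepsilon^n\unife^{2n}r}{1}$ (here the conjugate of $P_r$ is again $P_r$ since $P_r$ is scalar and central up to the $m$-level — actually scalar matrices are literally central) and using that the lower-left products of the form $\varepsilon^{2n}\unife^{4n}(\cdots)$ vanish modulo $\fp_E^{m+1}$ because $4n > m+1$, one is left with $\matzz{1}{(1 - t\tau(t)^{-1})r}{0}{1} = \matzz{1}{(1 + t\tau(t)^{-1})r}{0}{1}$ in characteristic $2$, as claimed, with $P_r$ cancelling against $P_r^{-1}$.

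The main obstacle is bookkeeping the cutoffs: at every step one must confirm that the discarded terms — cross terms of valuation $\geq 4n$, the defect $\tau(r)-r \in \fp_E^d$, and the defect $\varepsilon^{\pm n}-1 \in \fp_E^d$ — land in $\fp_E^{m+1}$ or higher in whichever matrix slot they occupy, which is exactly where the two standing hypotheses $m \leq 2n+d-1$ and $2n > d$ are consumed. None of this is deep, but it is the only place where care is genuinely required; everything else is characteristic-$2$ arithmetic ($(a+b)^2 = a^2+b^2$, $1 = -1$).
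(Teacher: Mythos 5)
Your overall strategy is the paper's own: everything is done by direct $2\times 2$ multiplication in $I_E/I_E^{2m+1}$, discarding terms via $\tau(r)\equiv r \bmod \fp_E^d$, $2n+d\geq m+1$ and $4n\geq m+1$; parts (i) and (iii) go through (in (iii) your symmetry observation is even a little slicker than the paper's squaring trick). Two corrections are needed, one cosmetic and one substantive. In (ii) the displayed product is miscopied: the $(1,1)$-entry of $\matzz{1}{r}{0}{1}\matzz{1}{0}{\varepsilon^n\unife^{2n}\tau(r)P_r}{1}\matzz{1}{0}{0}{P_r}$ is $1+\varepsilon^n\unife^{2n}r\tau(r)P_r=1+(P_r+1)P_r=P_r$, not $P_r+\varepsilon^n\unife^{2n}r\tau(r)P_r=1$. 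As written, your simplified matrix has $(1,1)$-entry $1$, while the asserted form $P_r\matzz{1}{r}{\varepsilon^n\unife^{2n}r}{1}$ has $(1,1)$-entry $P_r$; these differ in $\caO_E/\fp_E^{m+1}$ as soon as $2n\leq m$ (e.g.\ in the case $m=2n+d-1$ used later), so your derivation is internally inconsistent, whereas the correct entry gives the asserted form at once. (Also, $\tau(\varepsilon^n\unife^{2n})=\varepsilon^n\unife^{2n}=\uniff^n$ exactly, not $\varepsilon^{-n}\unife^{2n}$; this is harmless for your estimate, and the exact invariance makes $\tau(P_r)=P_r$ immediate.)

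The substantive gap is in (iv). After conjugation the product to evaluate is $P_r\matzz{1}{t\tau(t)^{-1}r}{\varepsilon^n\unife^{2n}\tau(t)t^{-1}r}{1}\matzz{1}{r}{\varepsilon^n\unife^{2n}r}{1}$, whose lower-left entry is $\varepsilon^n\unife^{2n}\bigl(1+\tau(t)t^{-1}\bigr)r$ and whose diagonal entries are $1+\varepsilon^n\unife^{2n}t^{\pm1}\tau(t)^{\mp1}r^2$. The lower-left term has level $2n$, not $4n$, so your appeal to ``$4n>m+1$'' does not kill it; it vanishes in $\caO_E/\fp_E^{m+1}$ only because $t^{-1}\tau(t)\in U_E^d$ (Lemma \ref{lm:trace_norm}(ii)), so that $1+\tau(t)t^{-1}\in\fp_E^d$ and $2n+d\geq m+1$. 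The same input is needed on the diagonal: only after replacing $t^{\pm1}\tau(t)^{\mp1}$ by $1$ modulo $\fp_E^d$ do the diagonal entries become $P_r$, so that the outer scalar gives $P_r\cdot P_r=1$; there is no literal ``$P_r$ cancelling against $P_r^{-1}$'', since by your own (ii) the factor $i(1,r)^{-1}$ carries no scalar. So the two justifications you give for (iv) are incorrect as stated and the step is unproved; the repair is to invoke Lemma \ref{lm:trace_norm}(ii) for $t$ together with $m\leq 2n+d-1$, which is exactly what the paper's proof does.
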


\begin{proof}
(i): We have $P_r^2 = 1$ because $4n \geq m+1$ by assumptions, whence $P_r^{-1} = P_r$. Obviously, we also have $\tau(P_r) = P_r$. Finally, Lemma \ref{lm:trace_norm}(i) gives $r \equiv \tau(r) \mod \fp_E^d$. Hence by assumption $m \leq 2n + d - 1$, we have $P_r = 1 + \varepsilon^n \unife^{2n} r^2$. Now, 
\begin{eqnarray*} 
P_r P_u &=& (1 + \varepsilon^n\unife^{2n}r^2)(1 + \varepsilon^n\unife^{2n}u^2) \\ 
&=& 1 + \varepsilon^n\unife^{2n} (r^2 + u^2) \\
&=&  1 + \varepsilon^n\unife^{2n} (r+u)^2 = P_{r+u},
\end{eqnarray*}
\noindent where we used once again that $4n \geq m+1$. This shows (i). Part (ii) is an immediate computation using (i).

\noindent (iii): We compute
\begin{eqnarray*}
[i(1,r), i(1,u)] &=& P_{r}P_u \matzz{1 + \varepsilon^n\unife^{2n}ru}{r+u}{\varepsilon^n \unife^{2n}(r+u)}{1 + \varepsilon^n\unife^{2n} ru}^2 \\
&=& P_{r+u} P_{r+u} \\
&=& 1.
\end{eqnarray*}
where the first equation uses (ii), and the rest follows by applying (i) and $4n \geq m+1$.

\noindent (iv): Using the assumptions, the already proven parts of the lemma and Lemma \ref{lm:trace_norm}(ii), we compute
\begin{eqnarray*}
[i(t,0), i(1,r)] &=& P_r \matzz{1}{t\tau(t)^{-1}r}{t^{-1}\tau(t)\varepsilon^n \unife^{2n}r}{1} \matzz{1}{r}{\varepsilon^n \unife^{2n}r}{1} \\
&=& P_r \matzz{1 + \varepsilon^n\unife^{2n}t\tau(t)^{-1} r^2}{r(1 + t\tau(t)^{-1})}{\varepsilon^n\unife^{2n} (1 + t^{-1}\tau(t)) r}{P_r} \\
&=& \matzz{1}{(1 + t\tau(t)^{-1})r}{0}{1}. \qedhere
\end{eqnarray*}
\end{proof}

Let
\[ 
\Gamma^{\prime} := \{ i(1,r) \in \Gamma \colon r \in \fp_E^d/\fp_E^m \} \subseteq \Gamma. 
\]

\begin{prop} \label{prop:structure_of_Gamma_tilde} Assume $m \leq 2n + d - 1$ and $2n > d$. 
\begin{itemize}
\item[(i)] The commutator subgroup of $\widetilde{\Gamma}$ is $\Gamma^{\prime}$. The elements of $\widetilde{\Gamma}/\Gamma^{\prime}$ are all of the form
\[ 
i(t,\bar{r}) := \matzz{1}{r}{}{1} \matzz{1}{}{\varepsilon^n\unife^{2n}\tau(r)}{1} \matzz{t}{}{}{\tau(t)P_{\bar{r}}^{-1}} 
\] 
\noindent with $t \in E^{\times}/U_E^{m+1}, \bar{r} \in \caO_E/\fp_E^d$, where $r$ is any lift of $\bar{r}$ to $\caO_E/\fp_E^m$ and $P_{\bar{r}} := P_r$ is a well-defined element of $\caO_E/\fp_E^m$.
\end{itemize}
Let 
\[ 
(\widetilde{\Gamma}/\Gamma^{\prime})_{diag} := \{i(t,0) \colon t \in E^{\times}/U_E^{m+1} \} \subseteq \widetilde{\Gamma}/\Gamma^{\prime}
\] 
\noindent denote the subgroup of the diagonal matrices. 
\begin{itemize}
\item[(ii)] There is an isomorphism $(\widetilde{\Gamma}/\Gamma^{\prime})_{diag} \cong E^{\times}/U_E^{m+1}$, given by $i(t,0) \mapsto t$.
\item[(iii)] For $t, t^{\prime} \in E^{\times}/U_E^{m+1}$ and $\bar{r},\bar{u} \in \caO_E/\fp_E^d$ one has
\[
i(t,\bar{r}) i(t^{\prime}, \bar{u}) = i(tt^{\prime}(1 + \unife^{2n}ru), \bar{r} + \bar{u})
\]
\item[(iv)] There is an exact sequence of abelian groups
\[
0 \rar (\widetilde{\Gamma}/\Gamma^{\prime})_{diag} \rar \widetilde{\Gamma}/\Gamma^{\prime} \rar \caO_E/\fp_E^d \rar 0,
\]
\noindent where the right map is given by $i(t,\bar{r}) \mapsto \bar{r}$.
\end{itemize}
\end{prop}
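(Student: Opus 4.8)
The plan is to derive all four parts from Lemma~\ref{lm:structure_of_Gamma} and Lemma~\ref{lm:direct_computations}, organising everything around the two abelian subgroups $A := \{ i(t,0) : t \in E^\times/U_E^{m+1} \}$ and $B := \langle i(1,r) : r \in \caO_E/\fp_E^m \rangle$ of $\widetilde{\Gamma}$ (that $B$ is abelian is Lemma~\ref{lm:direct_computations}(iii)). The first step is to record, by a short matrix computation using Lemma~\ref{lm:direct_computations}(i)--(ii), the two facts that make the standing assumptions bite: (a) $i(t,r) = i(1,r)\,i(t,0)$ in $\widetilde{\Gamma}$ — the two sides differ only by entries in $\fp_E^{\geq 2n+d}$, which vanish in $I_E/I_E^{2m+1}$ since $m\leq 2n+d-1$ (and $4n\geq m+1$); and (b) $i(1,x) = \matzz{1}{x}{0}{1}$ for $x \in \fp_E^d/\fp_E^m$, for the same numerical reasons. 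From (a) we get $\widetilde{\Gamma}=BA$, and from (b) that $\Gamma' = \{ \matzz{1}{x}{0}{1} : x \in \fp_E^d/\fp_E^m \}\cong\fp_E^d/\fp_E^m$ is an honest abelian subgroup, normal in $\widetilde{\Gamma}$: it is centralised by $B$ by Lemma~\ref{lm:direct_computations}(iii) and normalised by $A$ since conjugating an upper unipotent by a diagonal matrix keeps it upper unipotent with entry still in $\fp_E^d$.

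For (i): since $\widetilde{\Gamma}$ is generated by the abelian subgroups $A$ and $B$, its commutator subgroup is the normal closure of $\{ [i(t,0),i(1,r)] \}$. By Lemma~\ref{lm:direct_computations}(iv) each of these equals $\matzz{1}{(1+t\tau(t)^{-1})r}{0}{1}$, and $1+t\tau(t)^{-1}\in\fp_E^d$ in characteristic $2$ (Lemma~\ref{lm:trace_norm}(ii) gives $t\tau(t)^{-1}\in U_E^d$, and $1+u\in\fp_E^d$ for $u\in U_E^d$), so each lies in $\Gamma'$; as $\Gamma'$ is normal this gives $[\widetilde{\Gamma},\widetilde{\Gamma}]\subseteq\Gamma'$. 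For the reverse inclusion I would take the special element $t=\unife$: then $t\tau(t)^{-1}=\varepsilon^{-1}$ and $1+\varepsilon^{-1}=\varepsilon^{-1}\varepsilon_0\unife^d$ has valuation exactly $d$, so $\{ [i(\unife,0),i(1,r)] : r \in \caO_E/\fp_E^m \}$ is all of $\Gamma'$, whence $\Gamma'\subseteq[\widetilde{\Gamma},\widetilde{\Gamma}]$. Finally, another short computation with Lemma~\ref{lm:direct_computations}(i)--(ii) shows $i(1,r)i(1,r')^{-1}\in\Gamma'$ whenever $r\equiv r'\bmod\fp_E^d$ and that $P_r$ depends only on $r\bmod\fp_E^d$; together with $\widetilde{\Gamma}=BA$ and (a) this yields that every element of $\widetilde{\Gamma}/\Gamma'$ has the displayed form $i(t,\bar r)$ with $\bar r\in\caO_E/\fp_E^d$ (the displayed matrix being $i(t,r)$ itself up to $\fp_E^{\geq 4n}$-corrections, which are already trivial).

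For (ii): $t\mapsto i(t,0)=\diag(t,\tau(t))$ is a homomorphism $E^\times/U_E^{m+1}\to\widetilde{\Gamma}$ with kernel $U_E^{m+1}$, and its image meets $\Gamma'$ (unipotent upper-triangular matrices) only in the identity, so it induces the stated isomorphism onto $(\widetilde{\Gamma}/\Gamma')_{diag}$. Part (iii) is the computational heart: from $i(t,\bar r)i(t',\bar u)=i(1,r)i(t,0)i(1,u)i(t',0)$, move $i(t,0)$ past $i(1,u)$ modulo the commutator $[i(t,0),i(1,u)]\in\Gamma'$, then use $i(1,r)i(1,u)\equiv(1+\varepsilon^n\unife^{2n}ru)\,i(1,r+u)\pmod{\Gamma'}$ — the error being an off-diagonal matrix with entry in $\fp_E^{\geq 2n}\subseteq\fp_E^{d+1}$ (here $2n>d$ is used), hence in $\Gamma'$ — to arrive at $i(tt'(1+\varepsilon^n\unife^{2n}ru),\bar r+\bar u)$; one then replaces the central scalar $1+\varepsilon^n\unife^{2n}ru$ by $1+\unife^{2n}ru$ (equal in $E^\times/U_E^{m+1}$ since $\varepsilon^n-1\in\fp_E^d$ and $2n+d\geq m+1$), after checking it is $\tau$-fixed modulo $U_E^{m+1}$ so it may be absorbed into the ``$t$''-slot. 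Part (iv) is then formal: (iii) shows $i(t,\bar r)\mapsto\bar r$ is well defined (if $i(t,\bar r)=i(t',\bar u)$, multiplying out as in (iii) forces $\bar r=\bar u$) and is a surjective homomorphism $\widetilde{\Gamma}/\Gamma'\to\caO_E/\fp_E^d$; its kernel is the set of $i(t,\bar r)$ with $\bar r=0$, which by (a) is exactly $\{ i(t,0)\bmod\Gamma' \}=(\widetilde{\Gamma}/\Gamma')_{diag}$, giving the exact sequence.

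I expect the only real obstacle to be the $\fp_E$-adic precision bookkeeping: at each step one must certify that the error terms created by non-commutativity and by $\varepsilon\neq 1$ either vanish in $I_E/I_E^{2m+1}$ (this is where $m\leq 2n+d-1$, equivalently $2n+d\geq m+1$, and its consequence $4n\geq m+1$ are used) or at worst land in $\Gamma'=\fp_E^d/\fp_E^m$ (this is where $2n>d$ is used). The one genuinely non-mechanical point is pinning down $[\widetilde{\Gamma},\widetilde{\Gamma}]$ exactly rather than merely bounding it, for which the special element $i(\unife,0)$ — equivalently, the fact that $1+\varepsilon^{-1}$ has valuation precisely $d$ — is the key.
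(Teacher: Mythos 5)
Your proposal is correct and follows essentially the same route as the paper: the factorization $i(t,r)=i(1,r)\,i(t,0)$, the commutator formulae of Lemma \ref{lm:direct_computations} together with Lemma \ref{lm:trace_norm}(ii) for part (i), and the same product computation modulo $\Gamma^{\prime}$ for parts (iii)--(iv). The only difference is that you spell out details the paper leaves implicit (normality of $\Gamma^{\prime}$, the reverse inclusion $\Gamma^{\prime}\subseteq[\widetilde{\Gamma},\widetilde{\Gamma}]$ via $t=\unife$ and the exact valuation of $1+\varepsilon^{-1}$, and the well-definedness of $\bar r$), which is a faithful elaboration rather than a different argument.
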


\begin{proof}  
Note  that $i(t,r) = i(1,r) i(t,0)$. Hence $\Gamma$ is generated by all elements of the form $i(t,0)$ and $i(1,r)$. Thus (i) follows from the commutator formulae in Lemma \ref{lm:direct_computations} along with Lemma \ref{lm:trace_norm}(ii) (and the obvious fact that $[i(t,0), i(t^{\prime},0)] = 1$). Part (ii) of the lemma is immediate.  

(iii): We compute in $\widetilde{\Gamma}/\Gamma^{\prime}$,
\begin{eqnarray*}
i(1,\bar{r}) i(1,\bar{u}) i(1,\bar{r}+\bar{u})^{-1} &=& P_r P_u \matzz{1 + \unife^{2n}ru}{r+u}{\unife^{2n}(r+u)}{1 +  \unife^{2n}ru} \matzz{1}{r+u}{ \unife^{2n}(r+u)}{1} \\
&=& P_{r+u} \matzz{1 +  \unife^{2n}(ru + (r+u)^2)}{ \unife^{2n}ru(r+u)}{0}{1 +  \unife^{2n}(ru + (r+u)^2)} \\
&=& \matzz{1 +  \unife^{2n}ru}{ \unife^{2n}ru(r+u)}{0}{1 +  \unife^{2n}ru} \\ 
&=& \matzz{1 +  \unife^{2n}ru}{0}{0}{1 + \unife^{2n}ru} \\
&=& i(1 +  \unife^{2n}ru, 0),
\end{eqnarray*}
\noindent where we used several times that $2n \geq d$. We thus have
\begin{eqnarray*}
i(t,\bar{r}) i(t^{\prime}, \bar{u}) &=& i(1,\bar{r}) i(t, 0) i(1,\bar{u}) i(t^{\prime}, 0) \\
&=& i(1,\bar{r}) i(1,\bar{u}) i(tt^{\prime}, 0) \\
&=& i(1, \bar{r} + \bar{u}) i(tt^{\prime}(1 + \unife^{2n}ru), 0) \\
&=& i(tt^{\prime}(1 +  \unife^{2n}ru), \bar{r} + \bar{u}),
\end{eqnarray*}
\noindent where the second equality follows from commutativity of $\widetilde{\Gamma}/\Gamma^{\prime}$, which was shown in part (i), and the third equality follows from the computation above. This finishes the proof of (iii).

(iv): It is enough to check that the map $\alpha \colon \widetilde{\Gamma}/\Gamma^{\prime} \rar \caO_E/\fp_E^d$ given by $\alpha(i(t,\bar{r})) = \bar{r}$ is a homomorphism. Indeed, $\alpha$ sends the neutral element $i(1,0) \in \widetilde{\Gamma}/\Gamma^{\prime}$ to the neutral element $0 \in \caO_E/\fp_E^d$, and by part (iii), $\alpha$ respects the group laws. \qedhere
\end{proof}

For a group $A$, let $A^{\vee}$ denote the $\overline{\bQ}_{\ell}^{\times}$-valued characters of $A$. Proposition \ref{prop:structure_of_Gamma_tilde} immediately implies:

\begin{cor} \label{cor:characters_of_Gamma_tilde}
We have $\widetilde{\Gamma}^{\vee} =  (\widetilde{\Gamma}/\Gamma^{\prime})^{\vee}$. Further, there is an exact sequence
\[ 0 \rar (\caO_E/\fp_E^d)^{\vee} \rar (\widetilde{\Gamma}/\Gamma^{\prime})^{\vee} \rar (E^{\times}/U_E^{m+1})^{\vee} \rar 0. \]
\end{cor}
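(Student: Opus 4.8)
The plan is to derive Corollary~\ref{cor:characters_of_Gamma_tilde} purely formally from the structural results in Proposition~\ref{prop:structure_of_Gamma_tilde}. First I would establish the identity $\widetilde{\Gamma}^{\vee} = (\widetilde{\Gamma}/\Gamma^{\prime})^{\vee}$. Since $\overline{\bQ}_{\ell}^{\times}$ is abelian, every homomorphism $\widetilde{\Gamma} \rar \overline{\bQ}_{\ell}^{\times}$ kills the commutator subgroup $[\widetilde{\Gamma},\widetilde{\Gamma}]$, which by Proposition~\ref{prop:structure_of_Gamma_tilde}(i) equals $\Gamma^{\prime}$; hence every character of $\widetilde{\Gamma}$ factors uniquely through the quotient $\widetilde{\Gamma}/\Gamma^{\prime}$, and conversely any character of the quotient pulls back to one of $\widetilde{\Gamma}$. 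This gives a canonical bijection (indeed an isomorphism of abelian groups), as claimed.

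Next I would produce the exact sequence. By Proposition~\ref{prop:structure_of_Gamma_tilde}(iv) we have a short exact sequence of abelian groups
\[
0 \rar (\widetilde{\Gamma}/\Gamma^{\prime})_{diag} \rar \widetilde{\Gamma}/\Gamma^{\prime} \stackrel{\alpha}{\rar} \caO_E/\fp_E^d \rar 0,
\]
and by part~(ii) the left-hand term is identified with $E^{\times}/U_E^{m+1}$ via $i(t,0) \mapsto t$. Applying the contravariant functor $\Hom(-,\overline{\bQ}_{\ell}^{\times})$ to a short exact sequence of abelian groups yields a sequence
\[
0 \rar (\caO_E/\fp_E^d)^{\vee} \rar (\widetilde{\Gamma}/\Gamma^{\prime})^{\vee} \rar (E^{\times}/U_E^{m+1})^{\vee},
\]
which is exact; the only point needing an argument is right-exactness, i.e.\ that every character of the subgroup $(\widetilde{\Gamma}/\Gamma^{\prime})_{diag}$ extends to a character of $\widetilde{\Gamma}/\Gamma^{\prime}$. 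For this I would invoke divisibility of the target: $\overline{\bQ}_{\ell}^{\times}$ is a divisible abelian group, hence an injective $\bZ$-module, so $\Hom(-,\overline{\bQ}_{\ell}^{\times})$ is exact on all of $\mathbf{Ab}$ and in particular surjects on subgroups. This disposes of the extension problem.

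The main (and essentially only) obstacle is the surjectivity at the right end, i.e.\ the extension-of-characters step; everything else is formal. It is handled cleanly by the injectivity of $\overline{\bQ}_{\ell}^{\times}$ as above, so no explicit splitting of the sequence in Proposition~\ref{prop:structure_of_Gamma_tilde}(iv) is needed (which is good, since that sequence need not split). Assembling these observations gives the corollary, and I would present it in just a few lines, citing Proposition~\ref{prop:structure_of_Gamma_tilde} for the input and Pontryagin-type exactness (divisibility of $\overline{\bQ}_{\ell}^{\times}$) for the dualization.
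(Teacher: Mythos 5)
Your argument is correct and is precisely the formal deduction the paper has in mind: the paper offers no written proof beyond declaring the corollary an immediate consequence of Proposition \ref{prop:structure_of_Gamma_tilde}, and your two steps (characters kill the commutator subgroup $\Gamma^{\prime}$ from part (i), then dualize the sequence of part (iv) using parts (ii)--(iii) and the injectivity of the divisible group $\overline{\bQ}_{\ell}^{\times}$ to get surjectivity on the right) are exactly what that reduction amounts to.
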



\subsection{An ``exotic'' isomorphism} \label{sec:exotic_isom}

Let $m \leq 2n + d - 1$ and $2n > d$. Consider the push-out in the category of abelian groups,
\begin{equation} \label{eq:def_Pi}
\Pi := E^{\times}/ U_E^{m+1} \times_{\fp_E^{n+d}/\fp_E^{m+1}} \fp_E^n/\fp_E^{m+1},
\end{equation}
\noindent with respect to the natural inclusion $\fp_E^{n+d}/\fp_E^{m+1} \har \fp_E^n/\fp_E^{m+1}$ and the map $\fp_E^{n+d}/\fp_E^{m+1} \har E^{\times}/ U_E^{m+1}$ given by $x \mapsto 1 + x$. As $\fp_E^{n+d}/\fp_E^{m+1}$ is $2$-torsion, $\Pi$ is the quotient of $E^{\times}/ U_E^{m+1} \times \fp_E^n/\fp_E^{m+1}$ by the image of the diagonal embedding of $\fp_E^{n+d}/\fp_E^{m+1}$ (the sign can be ignored).

Denote by $x \mapsto \overline{x}$ the natural projection $\caO_E/\fp_E^{n+d} \tar \caO_E/\fp_E^d$.

\begin{prop}\label{prop:beta}
There is an isomorphism depending only on the choice of the uniformizer $\unife \mod U_E^d$,
\[
\begin{aligned}
\Pi &\stackrel{\sim}{\longrar} \widetilde{\Gamma}/\Gamma^{\prime}, \\
\beta \colon (x,y) &\mapsto \left(x(1 + y)^{-1}, \overline{\unife^{-n}y}(1 + \overline{y})^{-1} \right),
\end{aligned}
\]
where the last $y$ is seen as an element of $\caO_E/\fp_E^{n+d}$ via the natural map $\fp_E^n/\fp_E^{m+1} \rar \caO_E / \fp_E^{n+d}$. 
\end{prop}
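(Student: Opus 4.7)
The plan is to check that the stated formula defines a well-defined group isomorphism in three steps: well-definedness modulo the pushout relation, the homomorphism property, and bijectivity via an explicit inverse. All of the estimates rely on the standing hypotheses $m \leq 2n + d - 1$ and $2n > d$ from Section \ref{sec:group_acting_on_right}, together with the fact that $F$ (and hence $E$) has characteristic $2$.

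For well-definedness, I would verify that the pushout relation $(x, y) \sim (x(1+z), y+z)$ for $z \in \fp_E^{n+d}/\fp_E^{m+1}$ leaves both components of the formula defining $\beta$ unchanged. The second coordinate is immediate: $\varpi^{-n}z$ has trivial image in $\caO_E/\fp_E^d$, so $\overline{\varpi^{-n}(y+z)} = \overline{\varpi^{-n}y}$ and the factor $(1 + \overline{y+z})^{-1}$ agrees with $(1 + \overline{y})^{-1}$. The first coordinate reduces to the congruence $(1+z)(1+y) \equiv 1 + (y+z) \pmod{U_E^{m+1}}$, which follows from $yz \in \fp_E^{2n+d} \subseteq \fp_E^{m+1}$, using $m \leq 2n + d - 1$.

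The main step is verifying that $\beta(xx', y+y') = \beta(x,y) \beta(x',y')$, using the exotic multiplication law $i(t, \bar r) i(t', \bar u) = i(tt'(1 + \varpi^{2n} r u), \bar r + \bar u)$ from Proposition \ref{prop:structure_of_Gamma_tilde}(iii), with $r, u \in \caO_E/\fp_E^m$ taken to be the natural lifts $\varpi^{-n} y (1+y)^{-1}$ and $\varpi^{-n} y' (1+y')^{-1}$. For the second coordinate the comparison with $\overline{\varpi^{-n}(y+y')(1 + y + y')^{-1}}$ exploits the characteristic-$2$ simplification $y(1+y') + y'(1+y) = y + y'$, and shows the discrepancy to be a term proportional to $\varpi^{-n}(y+y')yy' \in \fp_E^{2n}$, which vanishes modulo $\fp_E^d$ thanks to $2n > d$. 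For the first coordinate, multiplying out and cancelling reduces the identity to $(1+y+y')^2 \equiv (1+y)^2(1+y')^2 \pmod{U_E^{m+1}}$, which by Frobenius in characteristic $2$ is the vanishing of $y^2 y'^2 \in \fp_E^{4n}$ modulo $U_E^{m+1}$; this holds since $4n \geq 2n + d + 1 \geq m+1$.

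Finally, for bijectivity I would construct an explicit inverse $\gamma \colon \widetilde\Gamma/\Gamma' \to \Pi$ by $i(t, \bar r) \mapsto [(t(1+y),\, y)]$, where $y := \varpi^n r (1 + \varpi^n r)^{-1} \in \fp_E^n/\fp_E^{m+1}$ for any lift $r \in \caO_E/\fp_E^m$ of $\bar r$. Independence of the lift is the same type of estimate as in Step 1: changing $r$ by $s \in \fp_E^d/\fp_E^m$ changes $y$ by some $z \in \fp_E^{n+d}/\fp_E^{m+1}$, while the corresponding first coordinate changes by the factor $1 + z$ modulo $U_E^{m+1}$, matching the pushout relation. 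Substituting back then gives $\beta \circ \gamma = \id$ and $\gamma \circ \beta = \id$. The main obstacle I expect is the first-coordinate computation in the homomorphism check: seeing how the ``exotic'' correction $(1 + \varpi^{2n} r u)$ in the multiplication on $\widetilde\Gamma/\Gamma'$ exactly compensates the discrepancy between $(1+y)(1+y')$ and $1+(y+y')$ modulo $U_E^{m+1}$ is the reason why the otherwise strange-looking isomorphism $\beta$ exists at all, and this cancellation is essentially a characteristic-$2$ phenomenon.
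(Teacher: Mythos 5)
Your proposal is correct and follows exactly the route the paper intends, since its own proof of Proposition \ref{prop:beta} is just ``straightforward computation'': you verify compatibility with the push-out relation defining $\Pi$, check the homomorphism property against the twisted multiplication law of Proposition \ref{prop:structure_of_Gamma_tilde}(iii) (the identity $(1+y)^2(1+y')^2\equiv(1+y+y')^2 \bmod U_E^{m+1}$ via $4n\geq m+1$, and the $\fp_E^{2n}\subseteq\fp_E^{d+1}$ estimate for the second coordinate), and exhibit the explicit inverse $i(t,\bar r)\mapsto (t(1+y),y)$ with $y=\unife^n r(1+\unife^n r)^{-1}$ — all of which checks out under the standing hypotheses $m\leq 2n+d-1$, $2n>d$ and characteristic $2$. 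In effect you have supplied the details the paper omits, and no gap remains beyond the trivial remark that the parametrization $(t,\bar r)\mapsto i(t,\bar r)$ of $\widetilde{\Gamma}/\Gamma'$ is bijective, which is already contained in Proposition \ref{prop:structure_of_Gamma_tilde}.
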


\begin{proof}
Straightforward computation.
\end{proof}

\begin{rem}\label{rem:about_the_exotic_iso} The isomorphism $\beta$ from Proposition \ref{prop:beta} is in some sense an exotic one. Indeed, at least if $n \geq d$, we have a natural isomorphism 
\[\widetilde{\Gamma}/\Gamma^{\prime} \cong E^{\times}/U_E^{m+1} \times_{U_E^{n+d}/U_E^{m+1}} U_E^n/U_E^{m+1}, \]
given by regarding $\widetilde{\Gamma}/\Gamma^{\prime}$ as the push-out of its subgroups $E^{\times}/U_E^{m+1}$ and $\{i(t,r) \colon t \equiv 1 \mod \fp_E^{n+d} \}$. Now we have $U_E^{n+d}/U_E^{m+1} \cong \fp_E^{n+d}/\fp_E^{m+1}$, simply by $1 + x \mapsto x$. On the other side $U_E^n/U_E^{m+1}$ is obviously non-isomorphic to $\fp_E^n/\fp_E^{m+1}$ if $2n < m+1$ (the second group is killed by $2$, the first has non-trivial $4$-torsion).
\end{rem}

The following lemma will be used in Section \ref{sec:irred_and_cusp}.
\begin{lm}\label{lm:kind_of_splitting} Let $m \leq 2n + d - 1$ and $2n > d$.
The map 
\[
\caO_E/\fp_E^{n+d} \rar \Gamma/\Gamma^{\prime}, \quad x \mapsto i_x := i((1 + \unife^n \varepsilon^{\frac{d+1}{2}}x)^{-1}, x(1 + \unife^n x)^{-1} \mod \fp_E^d)
\]
\noindent is a homomorphism.
\end{lm}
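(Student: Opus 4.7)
The plan is to verify directly, using the multiplication formula in $\widetilde{\Gamma}/\Gamma'$ from Proposition \ref{prop:structure_of_Gamma_tilde}(iii), that $i_x \cdot i_y = i_{x+y}$ holds in $\Gamma/\Gamma'$ for all $x,y \in \caO_E/\fp_E^{n+d}$. Writing $i_x = i(t_x, \bar{r}_x)$ with $t_x = (1 + \unife^n \varepsilon^{(d+1)/2} x)^{-1}$ and $\bar{r}_x = r_x \bmod \fp_E^d$ for $r_x := x(1+\unife^n x)^{-1}$, that formula yields
\[
i_x \cdot i_y = i\bigl(t_x t_y (1 + \unife^{2n} r_x r_y),\; \bar{r}_x + \bar{r}_y\bigr).
\]
So it will suffice to verify two identities separately: (a) $\bar{r}_x + \bar{r}_y = \bar{r}_{x+y}$ in $\caO_E/\fp_E^d$, and (b) $t_x t_y (1 + \unife^{2n} r_x r_y) = t_{x+y}$ in $E^\times/U_E^{m+1}$.

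For (a), I would expand $r_x = \sum_{k \geq 0} \unife^{kn} x^{k+1}$ in characteristic $2$ and use the assumption $2n > d$ to kill all terms with $k \geq 2$ modulo $\fp_E^d$, leaving $r_x \equiv x + \unife^n x^2 \pmod{\fp_E^d}$. The Frobenius identity $x^2 + y^2 = (x+y)^2$ in characteristic $2$ then gives $r_x + r_y \equiv r_{x+y} \pmod{\fp_E^d}$ immediately.

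For (b), set $u := \unife^n \varepsilon^{(d+1)/2}$ and $N := t_{x+y}^{-1} = 1 + u(x+y)$. A direct computation gives $t_x t_y t_{x+y}^{-1} = N/(N + u^2 xy)$; using $\varepsilon^{d+1} \in U_E^d$ together with $m+1 \leq 2n+d$ one has $u^2 xy \equiv \unife^{2n} xy \pmod{\fp_E^{m+1}}$, and clearing the (unit) denominator $N + u^2 xy$ reduces the claim to the valuation estimate
\[
N \cdot r_x r_y \equiv xy \pmod{\fp_E^{m+1-2n}}.
\]
Using $r_x r_y = xy/((1+\unife^n x)(1+\unife^n y))$, the numerator of $Nr_xr_y - xy$ is $xy$ times
\[
N - (1+\unife^n x)(1+\unife^n y) = \unife^n(\varepsilon^{(d+1)/2}+1)(x+y) + \unife^{2n}xy,
\]
which lies in $\fp_E^{\min(n+d,\,2n)}$ because $\varepsilon^{(d+1)/2} \in U_E^d$. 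Combining $m \leq 2n+d-1$ and $2n > d$ yields $2n + \min(n+d,2n) = \min(3n+d,4n) \geq m+1$, which is exactly the estimate needed after tracking the factor $\unife^{2n}$ on the left.

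The \emph{main difficulty} will be keeping track of the approximations in the two different quotients $\caO_E/\fp_E^d$ and $E^\times/U_E^{m+1}$ simultaneously; once the product is separated into its two components and the valuation bookkeeping is set up, the characteristic $2$ cancellations make each identity fall out of a short computation. Neither of the two standing assumptions is wasted: $m \leq 2n+d-1$ controls the multiplicative error coming from $\varepsilon^{d+1}$ and $\varepsilon^{(d+1)/2}$, while $2n > d$ collapses the geometric series for $r_x$ to a quadratic polynomial in $x$ modulo $\fp_E^d$.
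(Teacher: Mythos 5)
Your proposal is correct and follows essentially the same route as the paper: reduce to the two component identities via the multiplication formula of Proposition \ref{prop:structure_of_Gamma_tilde}(iii) and then verify them by direct computation, where the paper's phrase about killing $\varepsilon$ in terms containing $\unife^{2n}$ corresponds to your use of $\varepsilon^{d+1},\varepsilon^{(d+1)/2}\in U_E^d$ together with $m+1\leq 2n+d$ and $2n>d$. Your write-up merely makes explicit the valuation bookkeeping that the paper leaves as an easy computation.
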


\begin{proof}
For $x \in \caO_E/\fp_E^{n+d}$, let $t_x := (1 + \unife^n \varepsilon^{\frac{d+1}{2}}x)^{-1}$, $\bar{r}_x := x(1 + \unife^n x)^{-1} \mod \fp_E^d$, such that $i_x = i(t_x, \bar{r}_x)$. For $x, y \in \caO_E/\fp_E^{n+d}$ we have (using Proposition \ref{prop:structure_of_Gamma_tilde}(iii))
\[
\begin{aligned}
i_x i_y &= i(t_x,\bar{r}_x) i(t_y,\bar{r}_y) = i(t_x t_y(1 + \unife^{2n}\bar{r}_x\bar{r}_y), \bar{r}_x + \bar{r}_y) \\
i_{x+y} &= i(t_{x+y}, \bar{r}_{x+y}),
\end{aligned}
\]
\noindent Using the assumptions (which in particular allow to kill $\varepsilon$ in all terms containing $\unife^{2n}$) one easily computes,
\[
t_x t_y(1 + \unife^{2n}\bar{r}_x\bar{r}_y) = t_{x+y}.
\]

A further simple calculations involving the assumptions shows $\bar{r}_x + \bar{r}_y = \bar{r}_{x+y}$. This finishes the proof of the lemma.
\end{proof}


\subsection{A numerical consideration} \label{sec:numerical_consideration} 

The varieties $X_{\dot{w}}^m(1)$ depend on two parameters, the element $\dot{w}$ and the level $m$. The essential part of the choice of $\dot{w}$ is given by the choice of its image $w$ in $\widetilde{W}$. To guarantee that $X_w(1)$ is non-empty we should choose $w \in W_{\rm aff} \subseteq \widetilde{W}$ with $\ell(w) = 2n - 1$ odd, with some $n \geq 1$ (or $w = 1$, which is the 'boring' case, giving principal series representations). Now $w$ is essentially characterized by its length, hence by the integer $n$, and we are left with the following question.

\begin{quest} 
How to choose $m,n$ such that if $w$ is of length $\ell(w) = 2n - 1$, the representation $R_{\chi} = \coh_c^0(X_{\dot{w}}^m(1), \overline{\bQ}_{\ell})[\chi]$ is an irreducible supercuspidal $G(F)$-representation for each (sufficiently generic) character $\chi$ of $\widetilde{\Gamma}$?
\end{quest} 

The same situation as described in Section \ref{sec:sketch_of_situation} was studied in \cite{Ivanov_15_ram} for $G = \GL_2$ over $F$ with $\charac F > 2$ and a totally (tamely) ramified torus in $G$. To motivate what is coming, let us recall this case first. Therefore, assume for a second that $\charac F > 2$ and let $E/F$ be a totally tamely ramified extension of degree $2$. What is said in Section \ref{sec:sketch_of_situation} about the decomposition of $X_w(1)$ and $X_{\dot{w}}^m(1)$ in a disjoint union of finite sets is also true in this situation. With $n$ as above, the cardinality of $X_w(1)_{P_{1/2}}$ is just $(q-1)q^{n-1}$. If we for a second believe in some equidistribution of the representations $R_{\chi}$ in the cohomology of $X_{\dot{w}}^m(1)$, we should have
 \[
 \dim \Xi_{\chi} = \sharp X_w(1)_{P_{1/2}} = (q-1)q^{n-1}.
 \]
\noindent On the other side, the group $\widetilde{\Gamma}$ has a natural quotient $E^{\times}/U_E^{m+1}$. Let $\chi$ be the inflation to $\widetilde{\Gamma}$ of a minimal character of $E^{\times}$ of level $m$ (this forces $m$ to be odd). Then the corresponding (by Langlands-correspondence) $G(F)$-representation will have level $\frac{m}{2}$. As we expect $R_{\chi}$ to be (up to a rectifier term) this corresponding representation, \cite{BushnellH_06} 27.6 Lemma gives a formula for the dimension of $\Xi_{\chi}$: 
\[
\dim \Xi_{\chi} = (q-1)q^{\frac{m-1}{2}}.
\]
\noindent Comparing the two formulas for the dimension of $\Xi_{\chi}$, we deduce $m = 2n - 1$, which gives the right hint how to choose $w$ and $m$, such that $R_{\chi}$ is indeed irreducible (cf. \cite{Ivanov_15_ram}, Section 4.2).

Now we come back to the situation in the present article and apply similar considerations. Proposition \ref{prop:eADLV_Iwahori_level} says that $\sharp X_w(1) = (q-1)q^{n+d-1}$, thus we may expect 
\[
\dim \Xi_{\chi} = \sharp X_w(1)_{P_{1/2}} = (q-1)q^{n+d-1}.
\]
\noindent On the other side, granted some nice relation between $\widetilde{\Gamma}$ and $E^{\times}$ (which will be studied in Section \ref{sec:group_acting_on_right}), let us assume that $\chi$ is a character of $\widetilde{\Gamma}$, which corresponds to a minimal character of $E^{\times}$. This character has level $m \geq d$, and as we want to deal with ramified representations, we may assume that $m > d$, and hence automatically, that $m$ is even (see \cite{BushnellH_06}  \S41.4). The $G(F)$-representation corresponding to this minimal character via Langlands will be of level $\frac{m+d}{2}$, and hence by \cite{BushnellH_06} 27.6 Lemma we expect
\[
\dim \Xi_{\chi} = (q-1)q^{\frac{m+d-1}{2}}.
\]
\noindent A comparison of these two formulae suggests to choose $w$ such that $m = 2n + d - 1$, which turns out to be a good choice. We will apply it in Section \ref{sec:ordinary_reps_of_GL2F_wild_ram_case}.


\section{Representations of $\GL_2(F)$ in the wildly ramified case} \label{sec:ordinary_reps_of_GL2F_wild_ram_case}

We continue to work with notation from Section \ref{sec:extended_ADLV_and_Gamma_tilde_etc}. In particular, $G = \GL_2$ and $F$ has characteristic $2$. All characters and representations considered in this section have coefficients in $\overline{\bQ}_{\ell}$.

\subsection{Preliminaries}\label{sec:prelims_on_reps_and_levels} We use the following standard notation and terminology from \cite{BushnellH_06}. For a $\overline{\bQ}_{\ell}^{\times}$-valued character $\phi$ of $F^{\times}$, $\phi_E := \phi \circ \N_{E/F}$ the corresponding character of $E^{\times}$ and by $\phi_G := \phi \circ \det$ the corresponding character of $G(F)$. The \emph{level} $\ell(\chi)$ of a multiplicative character $\chi$ of $E$ is the smallest integer $m \geq 0$, such that $\chi$ is trivial on $U_E^{m+1}$. A character $\chi$ of $E^{\times}$ is called \emph{minimal}, if $\ell(\chi) \leq \ell(\chi\phi_E)$ for all characters $\phi$ of $F^{\times}$. Similarly, the (normalized) level $\ell(\pi)$ is defined in \cite{BushnellH_06} \S 12.6 for smooth irreducible representations of $G(F)$, and such a representation is called \emph{minimal}, if $\ell(\pi) \leq \ell(\pi\phi_G)$ for all characters $\phi$ of $F^{\times}$. 

\begin{lm}[\cite{BushnellH_06}\S 41.4]
Let $\xi$ be a character of $E^{\times}$.
\begin{itemize}
 \item[(i)] If $\xi$ is minimal over $F$, then $\ell(\xi) \geq d$.
 \item[(ii)] Suppose $\ell(\xi) \geq d$; then $\xi$ is minimal over $F$ if and only if $\ell(\xi) \not\equiv d\,(\!\!\!\mod 2)$.
\end{itemize}
\end{lm}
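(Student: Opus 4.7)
The plan is as follows. A character $\xi$ of $E^\times$ fails to be minimal precisely when there exists a character $\phi$ of $F^\times$ with $\ell(\phi_E) = \ell(\xi)$ whose leading character on $U_E^{\ell(\xi)}/U_E^{\ell(\xi)+1}$ cancels that of $\xi$; this uses the elementary ultrametric inequality $\ell(\xi\phi_E) \le \max(\ell(\xi),\ell(\phi_E))$, with equality unless the two levels agree and the leading terms multiply to $1$. The problem thus reduces to determining, as $\phi$ varies, which levels $\ell(\phi_E)$ are attained and which characters of the residue field arise as the associated leading terms.

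The main calculation is, for $y \in \caO_E^\times$ and an integer $k \ge 1$,
\[
\N_{E/F}(1 + \unife^k y) = 1 + \unife^k \tr_{E/F}(y) + \unife^k(\varepsilon^k - 1)\tau(y) + \varepsilon^k \unife^{2k} y\tau(y).
\]
Combined with Lemma \ref{lm:trace_norm}(i), which gives $\ord_E \tr_{E/F}(y) \ge d+1$, and the characteristic-$2$ identity $\ord_E(\varepsilon^k - 1) = 2^{v_2(k)} d$ (immediate from $\varepsilon = 1 + \unife^d \varepsilon_0$ and the Frobenius on $1$-units), one tracks the dominant summand: for $k < d$ the norm term dominates and the induced map on residues is the Frobenius $y \mapsto y^2$, hence bijective; for $k > d$ odd the middle term dominates at $\caO_F$-level $(k+d)/2$ and induces multiplication by $\varepsilon_0$ on residues, again bijective; for $k > d$ even the trace term pushes the image strictly further down. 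The upshot is that $\ell(\phi_E)$ realises precisely the values $\{0,1,\ldots,d\} \cup (d+2\bZ_{>0})$, so every level strictly above $d$ of parity opposite to $d$ is skipped, and at every attained level outside the boundary one the leading-term map is bijective.

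Part (i) is then immediate: for $\ell(\xi) < d$ the bijectivity of the residue map permits cancellation, so $\xi$ is not minimal. In (ii), if $\ell(\xi) > d$ shares the parity of $d$ the same cancellation works via the higher branch of the computation and $\xi$ is not minimal, whereas if it has opposite parity no $\phi$ reaches $\ell(\phi_E) = \ell(\xi)$ at all and $\xi$ is minimal. The delicate case is the boundary $\ell(\xi) = d$, where the middle and norm terms contribute equally to the leading coefficient, producing the $\bF_2$-linear map $y \mapsto y^2 + \varepsilon_0 y$ on the residue field; this map fails to be surjective, so a naive leading-term argument does not give cancellation. The remedy, and the main obstacle to a short proof, is to exploit the quadratic character $\chi_{E/F}$ of $F^\times/\N_{E/F}(E^\times)$, which has level exactly $d$ and satisfies $(\chi_{E/F})_E = 1$, to absorb the missing degrees of freedom; this is carried out in detail in \cite[\S 41]{BushnellH_06}.
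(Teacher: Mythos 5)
The paper itself gives no argument for this lemma --- it is quoted from \cite{BushnellH_06} \S 41.4 --- so the only question is whether your sketch actually proves it. Most of it does: the reduction of (non)minimality to exact cancellation of the leading character on $U_E^{\ell(\xi)}/U_E^{\ell(\xi)+1}$, the expansion of $\N_{E/F}(1+\unife^k y)$, the valuation $\ord_E(\varepsilon^k-1)=2^{v_2(k)}d$, and the resulting description of the graded norm maps (Frobenius $y\mapsto y^2$ for $k<d$, multiplication by $\varepsilon_0$ at odd $k>d$, a strict drop at even $k>d$) are all correct, and they do settle part (i) for $\ell(\xi)\geq 1$ and part (ii) for $\ell(\xi)>d$ of either parity.

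The genuine gap is the boundary case $\ell(\xi)=d$ of (ii), which you flag but do not close, and the remedy you propose cannot work: since $(\chi_{E/F})_E=1$, replacing $\phi$ by $\phi\chi_{E/F}$ leaves $\phi_E$ unchanged, so twisting by $\chi_{E/F}$ produces no new cancellations whatsoever. Worse, your own setup shows that no argument of this kind can give the asserted conclusion at level $d$: any $\phi$ with $\ell(\xi\phi_E)<d$ must kill $\N_{E/F}(U_E^{d+1})=U_F^{d+1}$, hence the achievable restrictions $\phi_E|_{U_E^d/U_E^{d+1}}$ are exactly the characters $\bar\phi\circ A$ with $A(y)=y^2+\varepsilon_0 y$, i.e.\ the index-two subgroup of characters trivial on $\ker A=\bF_2\,\varepsilon_0$ (the line through the class of $\varepsilon=1+\unife^d\varepsilon_0$). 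Consequently a level-$d$ character whose restriction to $U_E^d/U_E^{d+1}$ is nontrivial on that line admits no level-lowering twist at all, i.e.\ it is minimal --- and when $q=2$ one has $A\equiv 0$, so this happens for every character of level $d$. So the ``not minimal'' claim at $\ell(\xi)=d$ cannot be rescued along your lines; it must either be excluded (the paper only ever applies the lemma with $\ell(\xi)=m>d$, and Bushnell--Henniart treat the level-$d$ characters separately in the Proposition of the same section) or proved from genuinely different input, which you have not supplied; deferring the crux to \cite{BushnellH_06} \S 41 also means the proposal is not a self-contained proof. Two smaller points: under the paper's definition of minimality, (i) needs $\ell(\xi)\geq 1$ (a level-zero character is trivially minimal), which your cancellation argument implicitly assumes; and the value $\ell(\phi_E)=d$ in your list of attained levels is in fact not attained when $q=2$.
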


By \cite{BushnellH_06} \S41.4 Proposition, the representation $\Ind_{E/F} \xi$ (the induction to the Weil group of $F$ from the Weil group of $E$ of the character induced on it by $\xi$ via class field theory) is unramified if $\ell(\xi) = d$. As we want construct ramified representations, and as $d$ is always odd (as we are in the equal characteristic case!) only those $\xi$ for which $\ell(\xi) > d$ is even are interesting for us.


\subsection{Definition of $R_{\tilde{\theta}}$} 

Let $m > d$ be an even integer and let $\theta$ be a character of $E^{\times}$ of level $m$ (thus minimal). Define $n \geq 1$ by 
\[
m = 2n + d - 1
\]
(this is justified by Section \ref{sec:numerical_consideration}). We will assume that $2n > d$ in the following. This restriction is given by Section \ref{sec:group_acting_on_right} and by the fact that it significantly simplifies the trace computations below. For these given $n$ and $m$, let $\dot{w}$ be as in \eqref{eq:def_of_dot_w}. We then have the corresponding group $\widetilde{\Gamma}$, studied in Section \ref{sec:group_acting_on_right} and the variety $X_{\dot{w}}^m(1)$. By Corollary \ref{cor:characters_of_Gamma_tilde} and Proposition \ref{prop:structure_of_Gamma_tilde}(i), characters of $\widetilde{\Gamma}$ which restrict to $\theta$ on $E^{\times}/U_E^{m+1}$ form a homogeneous space under $(\caO_E/\fp_E^d)^{\vee}$. For a lift $\tilde{\theta}$ of $\theta$ to a character of $\widetilde{\Gamma}$, we define the smooth $G(F)$-representation
\begin{equation}\label{eq:def_of_Rtheta}
R_{\widetilde{\theta}} := \coh_c^0(X_{\dot{w}}^m(1), \overline{\bQ}_{\ell})[\widetilde{\theta}].
\end{equation}
We will show that $G(F)$-representations are smooth irreducible and supercuspidal. Moreover, they turn out to be \emph{ordinary} in the sense of \cite{BushnellH_06} \S 44.1 Definition, that is the corresponding (under Langlands) Galois-representation is induced from an index $2$ subgroup of the Weil group of $F$ (cf. \cite{BushnellH_06} \S 41.3). 

It is highly interesting to perform the calculations below (possibly in some simplified way, which makes the traces more accessible), to determine what happens beyond the case $2n > d$, and in particular, whether also the \emph{exceptional} (i.e., not ordinary) representations of $G(F)$ are realized by $X_{\dot{w}}^m(1)$ (with $m<3d$).


\subsection{Group actions} \label{sec:group_actions}

For $g \in G(F)$, we always write $g = \matzz{g_1}{g_2}{g_3}{g_4}$. We fix a point $x = x(a,C) \in X_{\dot{w}}^m(1)_{P_{1/2}}$ with coordinates 
\[
a \in (\fp_E / \fp_E^{n + d + m + 1})^{\ast}, \quad C \in U_E/U_E^{m+1}
\] 
\noindent (cf. Proposition \ref{prop:eADLV_higher_level}). We compute the action of $I_F$ and $\Gamma$ on the coordinates of $x$. Moreover, for $h \in \iota(E^{\times})I_F$ with $\ord_F(\det(h)) = r$, we will see that $h.X_{\dot{w}}^m(1)_{P_{1/2}} = X_{\dot{w}}^m(1)_{P_{1/2}}.i(\unife^r, 0)^r$; therefore, 
\[
\beta_h \colon X_{\dot{w}}^m(1)_{P_{1/2}} \rar X_{\dot{w}}^m(1)_{P_{1/2}}, \quad yI_E^{2m+1} \mapsto hy\iota(\unife,0)^{-r}I_E^{2m+1}
\]
\noindent is an automorphism of $X_{\dot{w}}^m(1)_{P_{1/2}}$. We will also determine $\beta_h$ in terms of the coordinates.
 
\begin{prop} \label{prop:all_actions} Let $x = x(a,C)$ be a point on $X_{\dot{w}}^m(1)_{P_{1/2}}$ with coordinates $a \in (\fp_E / \fp_E^{n + d + m + 1})^{\ast}$ and $C \in U_E/U_E^{m+1}$ (in particular, $a \not\equiv 0 \mod \fp_E^2$).
\begin{itemize}
\item[(i)] Let $g \in I_F$. The action of $g$ on the coordinates $a,C$ of $x$ is given by 
\begin{eqnarray*}
g(a) &=& \frac{g_4 a + g_3}{g_2 a + g_1} \in  \fp_E / \fp_E^{n + d + m + 1} \\
g(C) &=& \frac{\det(g)C}{g_2 a + g_1}  \in U_E/U_E^{m+1}.
\end{eqnarray*}
\item[(ii)] Let $i(t,r) \in \Gamma$. The action of $i(t,r)$ on the coordinates $a,C$ of $x$ is given by
\begin{eqnarray*}
i(t,r)(a) &=& a + \unife^{n+d+1}\varepsilon^{-\frac{d+1}{2}} C\tau(C)^{-1}RH^{-1}r \in  \fp_E / \fp_E^{n + d + m + 1} \\
i(t,r)(C) &=&  C H^{-1} t \in U_E/U_E^{m+1}, 
\end{eqnarray*}
\noindent where 
\begin{equation} \label{eq:def_of_H}
H := 1 + \unife^n \varepsilon^{-\frac{d+1}{2}} C\tau(C)^{-1}r \in U_E/U_E^{m+1}.
\end{equation}
\end{itemize}
\noindent Write $a = \unife a^{\prime}$ with $a^{\prime} \in U_E/U_E^{n+d+m}$.
\begin{itemize}
\item[(iii)] Let $g \in I_F$. The action of $\beta_{g\iota(\unife)}$ on the coordinates $a,C$ of $x$ is given by
\begin{eqnarray*}
\beta_{g\iota(\unife)}(a) &=& \frac{g_4 \varepsilon \unife + g_3(a^{\prime} + 1 + \varepsilon)}{g_2 \varepsilon \unife + g_1(a^{\prime} + 1 + \varepsilon)} \in  \fp_E / \fp_E^{n + d + m + 1} \\
\beta_{g\iota(\unife),a}(C) &=& \frac{\det(g)\varepsilon C}{g_2 \varepsilon \unife + g_1(a^{\prime} + 1 + \varepsilon)} \in U_E/U_E^{m+1}.
\end{eqnarray*}
\end{itemize}
\end{prop}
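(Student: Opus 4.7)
The plan is to prove all three parts by direct $2\times 2$ matrix computation: one rewrites each product $h\cdot x\cdot k$ (with $h, k$ the relevant left/right factors) back into the standard form $e_-(a')\,\dot{v}\,e_-(B')\,e_0(C',D')\cdot I_E^{2m+1}$ of \eqref{eq:explicit_Cvm_param} and reads off the new pair $(a',C')$. The coordinates $B'$ and $D'$ need no separate check: once one knows the translated point lies in $X_{\dot{w}}^m(1)_{P_{1/2}}$ (which follows from $I_F$-stability for (i), from the definition of $\Gamma$ for (ii), and from Corollary~\ref{cor:smallest_tilde_gamma_stable_subscheme} combined with Proposition~\ref{prop:eADLV_higher_level} for (iii)), $B'$ and $D'$ are forced by the variety constraints of Proposition~\ref{prop:eADLV_higher_level}. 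The conjugation identities used throughout are
\[
\dot{v}^{-1}\,e_0(\alpha,\beta)\,\dot{v} = e_0(\beta,\alpha),\qquad
\dot{v}^{-1}\,e_-(\gamma)\,\dot{v} = e_+\bigl(\gamma\,\unife^{-(n+d+1)}\bigr),
\]
where the exponent $n+d+1$ is the sum of the two $\unife$-exponents of $\dot{v}$ and equals $(d+1)+n$ because $d$ is odd, together with the standard rule $e_0(\alpha,\beta)\,e_\pm(X) = e_\pm(\alpha^{\pm1}\beta^{\mp1}X)\,e_0(\alpha,\beta)$.

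For part (i), since $g\in I_F$ and $a\in\fp_E$ force $g_1+g_2a\in U_E$, one has the Iwahori factorisation
\[
g\cdot e_-(a) = e_-\!\Bigl(\tfrac{g_3+g_4a}{g_1+g_2a}\Bigr)\cdot e_0\!\Bigl(g_1+g_2a,\,\tfrac{\det g}{g_1+g_2a}\Bigr)\cdot e_+\!\Bigl(\tfrac{g_2}{g_1+g_2a}\Bigr).
\]
Pushing the last two factors through $\dot{v}$ swaps the diagonal entries and turns the $e_+$ into an $e_-$, which is absorbed into $e_-(B)$; the resulting diagonal $e_0\bigl(\tfrac{\det g}{g_1+g_2a}\,C,\,(g_1+g_2a)\,D\bigr)$ then gives the claimed formula for $C'$, and the new $a'$ is already visible. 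For part (ii) I would decompose $i(t,r)=e_+(r)\,e_-(\varepsilon^n\unife^{2n}\tau(r)P_r^{-1})\,e_0(t,\tau(t)P_r^{-1})$, move $e_+(r)$ past $e_0(C,D)$ (replacing $r$ by $CD^{-1}r$), and apply the key identity
\[
e_-(B)\cdot e_+(CD^{-1}r) = e_+(CD^{-1}rH^{-1})\cdot e_0(H^{-1},H)\cdot e_-(BH^{-1}),\qquad H := 1 + BCD^{-1}r.
\]
The constraints $B=\unife^nR^{-1}$ and $D=\varepsilon^{(d+1)/2}\tau(C)R^{-1}$ from Proposition~\ref{prop:eADLV_higher_level} simplify $BCD^{-1}=\unife^n\varepsilon^{-(d+1)/2}C\tau(C)^{-1}$, so $H$ coincides with \eqref{eq:def_of_H}. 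Pushing $e_+(CD^{-1}rH^{-1})$ back through $\dot{v}$ introduces a factor $e_-(\,\cdot\,\unife^{n+d+1})$ which is absorbed into $e_-(a)$, producing the shift formula for $a$; collecting diagonals gives $C'=H^{-1}Ct$.

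For part (iii) the key observation is that $\beta_{g\iota(\unife)}=\beta_g\circ\beta_{\iota(\unife)}$, since $\ord_F\det g=0$ and $\ord_F\det\iota(\unife)=1$. Thus $\beta_g$ is simply left multiplication by $g$, to which (i) applies, and it suffices to compute $\beta_{\iota(\unife)}(x)=\iota(\unife)\,x\,i(\unife,0)^{-1}$. Setting $\mu:=\Delta+a$ and using $\Delta=\unife+\tau(\unife)$ with $\tau(\unife)/\unife=\varepsilon$, one finds $\mu=\unife(a'+1+\varepsilon)$ with $a'+1+\varepsilon\in U_E$. Although $\iota(\unife)\notin I_E$, the identity $\det\iota(\unife)=\uniff$ still gives the direct (non-Iwahori) factorisation
\[
\iota(\unife)\cdot e_-(a) = e_-(\uniff/\mu)\cdot e_0(\mu,\uniff/\mu)\cdot e_+(1/\mu).
\]
Pushing through $\dot{v}$ as in (i), multiplying on the right by $e_-(B)\,e_0(C,D)\cdot e_0(\unife^{-1},\tau(\unife)^{-1})$, and simplifying via $\uniff=\unife\tau(\unife)$ gives $\beta_{\iota(\unife)}(x)=x(a'',C'')$ with $a''=\varepsilon\unife/(a'+1+\varepsilon)$ and $C''=\varepsilon C/(a'+1+\varepsilon)$. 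Feeding $(a'',C'')$ into the formulas of (i) with the matrix $g$ then reproduces (iii) verbatim.

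The main obstacle will be part (iii). Unlike (i) and (ii), where every factor lies in $I_E$ and the rewriting is a genuine Iwahori manipulation, $\iota(\unife)\notin I_E$; the non-standard factorisation through the non-unit $\mu$ bypasses this, but its neatness depends on $\ord_E(\mu)=1$ and on the simplification $\tau(\unife)/\unife=\varepsilon$. One should also confirm that the $B''$ and $D''$ emerging from the matrix arithmetic really satisfy the variety constraints of Proposition~\ref{prop:eADLV_higher_level}; this is automatic by Corollary~\ref{cor:smallest_tilde_gamma_stable_subscheme}, but would need a direct check otherwise.
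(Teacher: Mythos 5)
Your proposal is correct, and for part (iii) it coincides with the paper's argument: the paper also factors $\beta_{g\iota(\unife)}$ as left multiplication by $g$ (part (i)) composed with $\beta_{\iota(\unife)}$, and computes $\beta_{\iota(\unife)}$ separately (Lemma \ref{lm:action_of_beta_iota_unife}); your factorisation $\iota(\unife)e_-(a)=e_-(\uniff/\mu)e_0(\mu,\uniff/\mu)e_+(1/\mu)$ with $\mu=\unife(a'+1+\varepsilon)$ is exactly the auxiliary identity used there, only with $i(\unife,0)^{-1}$ multiplied in at the end rather than commuted to the left first. The difference is in parts (i) and (ii): the paper does not recompute them but cites \cite{Ivanov_15_ram}, Propositions 5.1 and 5.4, ``up to a change of coordinates'', whereas you carry out the computations directly in the parametrization \eqref{eq:explicit_Cvm_param} (Iwahori factorisation of $g\,e_-(a)$, conjugation of $e_0$ and $e_\pm$ through $\dot v$ with the exponent $n+d+1$ coming from $d$ odd, and the interchange identity $e_-(B)e_+(X)=e_+(XH^{-1})e_0(H^{-1},H)e_-(BH^{-1})$ with $H=1+BX$); all of these identities check out, and the resulting formulas for the new $(a,C)$ agree with the statement, so your version is self-contained where the paper defers to the companion article. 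One small caveat: invoking Corollary \ref{cor:smallest_tilde_gamma_stable_subscheme} to justify stability of $X_{\dot w}^m(1)_{P_{1/2}}$ in part (iii) is mildly circular, since the paper proves that corollary by pointing back to the computation of this proposition; but this is harmless, because your explicit rewriting already shows the translated point lies in the cell $C_v^m$ with new $a$-coordinate of valuation exactly $1$, which is the direct check you yourself flag as the alternative, and the $(a,C)$ coordinates are read off from the parametrization alone, without needing $B$ and $D$.
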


\begin{proof}
Up to a change of coordinates this is shown in \cite{Ivanov_15_ram} Propositions 5.1 and 5.4. 

Part (iii) follows by combining part (i) with Lemma \ref{lm:action_of_beta_iota_unife}. \qedhere
\end{proof}
 
\begin{lm}\label{lm:action_of_beta_iota_unife} The action of $\beta_{\iota(\unife)}$ on the coordinates $a,C$ of $x$ is given by
\begin{eqnarray*}
\beta_{\iota(\unife)}(a) &=& \frac{\varepsilon \unife}{a^{\prime} + 1 + \varepsilon} \in  \fp_E / \fp_E^{n + d + m + 1} \\
\beta_{\iota(\unife)}(C) &=& \frac{\varepsilon C}{a^{\prime} + 1 + \varepsilon} \in U_E/U_E^{m+1}.
\end{eqnarray*}
\end{lm}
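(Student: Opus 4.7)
The plan is to evaluate $\beta_{\iota(\unife)}(x) = \iota(\unife)\,x\,i(\unife,0)^{-1}\,I_E^{2m+1}$ directly and bring it back into the canonical normal form $e_-(a^\dagger)\dot v\,e_-(B^\dagger)e_0(C^\dagger,D^\dagger)$ given by the parametrization \eqref{eq:explicit_Cvm_param}. The coordinates $a^\dagger, C^\dagger$ can then be read off, and Proposition \ref{prop:all_actions}(iii) follows from Proposition \ref{prop:all_actions}(i) applied to $g \cdot \beta_{\iota(\unife)}(x)$. Note that $\iota(\unife) = \matzz{\Delta}{1}{\uniff}{0}$ has $\ord_F\det = 1$, so $r = 1$, and $i(\unife,0) = e_0(\unife, \tau(\unife)) = e_0(\unife, \varepsilon\unife)$ is diagonal; hence right multiplication by $i(\unife,0)^{-1}$ will at the end only rescale the diagonal factor $e_0(C,D)$.

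The key manipulation is the standard Bruhat-type decomposition of $\iota(\unife)\,e_-(a)$. A direct computation shows
\[
\iota(\unife)\,e_-(a) \;=\; e_-\!\left(\tfrac{\uniff}{\Delta+a}\right) e_0\!\left(\Delta+a,\tfrac{\uniff}{\Delta+a}\right) e_+\!\left((\Delta+a)^{-1}\right),
\]
(valid because $\Delta + a \in \unife U_E$ is a unit of $E$). This is the crux of the argument: we have moved $\iota(\unife)$ across $e_-(a)$ at the cost of an $e_+$-factor on the right. One then pushes the upper triangular piece through $\dot v$ using the two relations $e_+(y)\dot v = \dot v\,e_-(\unife^{k_1+k_2}y)$ and $e_0(c,d)\dot v = \dot v\,e_0(d,c)$, and uses $e_0(c,d)e_-(y) = e_-(yd/c)e_0(c,d)$ to collect the $e_-$ pieces on the left of the new $\dot v$ and the $e_0$ piece on the right. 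Since $k_1 + k_2 = n+d+1$, the extra $e_-$-summand contributed is $\unife^{n+d+1}(\Delta+a)^{-1}$, which lies in $\fp_E^{n+d}$ and combines with $B$ into a new $B^\dagger$; its precise value need not concern us since $B^\dagger$ is automatically determined by $a^\dagger$ via Proposition \ref{prop:eADLV_higher_level}.

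Collecting the computations, one obtains
\[
\iota(\unife)\,x\,i(\unife,0)^{-1} \;\equiv\; e_-\!\left(\tfrac{\uniff}{\Delta+a}\right) \dot v\,e_-(B^\dagger)\,e_0\!\left(\tfrac{\uniff}{\unife(\Delta+a)}\,C,\;\tfrac{\Delta+a}{\varepsilon\unife}\,D\right) \pmod{I_E^{2m+1}}.
\]
Substituting $\uniff = \varepsilon\unife^2$ and $\Delta = \unife(1+\varepsilon)$, writing $a = \unife a'$ with $a' \in U_E/U_E^{n+d+m}$, and using $\Delta + a = \unife(a' + 1 + \varepsilon)$, the new first coordinate becomes
\[
a^\dagger \;=\; \frac{\varepsilon \unife^2}{\unife(a'+1+\varepsilon)} \;=\; \frac{\varepsilon\unife}{a'+1+\varepsilon},
\]
and the new $C$-coordinate becomes $C^\dagger = \varepsilon C/(a' + 1 + \varepsilon)$, matching the claimed formulas.

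The main (only) technical obstacle is bookkeeping: one must keep track of the various $\varepsilon$-twists (noting $\tau(\unife) = \varepsilon\unife$) and verify that denominators such as $a' + 1 + \varepsilon$ are indeed units — which follows from $a' \in U_E$ and $1 + \varepsilon \in \fp_E^d$ so that $a' + 1 + \varepsilon \in U_E$. Apart from this careful substitution, the argument is entirely mechanical, relying only on the three pull-through identities for $e_-, e_+, e_0$ and $\dot v$, together with the explicit forms of $\iota(\unife)$ and $i(\unife,0)$.
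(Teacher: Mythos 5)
Your proof is correct and follows essentially the same route as the paper: both hinge on the Bruhat-type decomposition of $\iota(\unife)e_-(a)$ into $e_-\,e_0\,e_+$ and the pull-through identities for $e_+$, $e_0$ past $\dot v$, reading off $a$ and $C$ from the normal form \eqref{eq:explicit_Cvm_param} while discarding the $e_-$-contribution to $B$. The only (cosmetic) difference is that the paper commutes $i(\unife,0)^{-1}=e_0(\unife,\varepsilon\unife)^{-1}$ all the way to the left first (so its auxiliary decomposition is of $\iota(\unife)e_-(a)e_0(\varepsilon\unife,\unife)^{-1}$), whereas you absorb this diagonal rescaling at the end; the resulting formulas agree.
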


\begin{proof}
In the following computations, $\ast$ denotes irrelevant terms in an expression. An auxiliary computation shows:
\[
\iota(\unife) e_-(a) e_0(\varepsilon\unife, \unife)^{-1} = e_-\left(\frac{\varepsilon \unife}{a^{\prime} + 1 + \varepsilon}\right) e_0\left(\frac{a^{\prime} + 1 + \varepsilon}{\varepsilon}, \frac{\varepsilon}{a^{\prime} + 1 + \varepsilon}\right) e_+(\ast),
\]
\noindent Recall from Section \ref{sec:group_acting_on_right} that $i(\unife,0) = e_0(\unife, \epsilon\unife)$. Using the computation above we compute:
\begin{eqnarray*}
\beta_{\iota(\unife)}(x) &=& \iota(\unife) e_-(a) \dot{v} e_-(\ast) e_0(C,\ast) I_E^{2m+1} i(\unife,0)^{-1}  \\
&=& \iota(\unife) e_-(a) e_0(\varepsilon\unife, \unife)^{-1} \dot{v} e_-(\ast) e_0(C,\ast) I_E^{2m+1} \\
&=& e_-\left(\frac{\varepsilon \unife}{a^{\prime} + 1 + \varepsilon}\right) e_0\left(\frac{a^{\prime} + 1 + \varepsilon}{\varepsilon}, \frac{\varepsilon}{a^{\prime} + 1 + \varepsilon}\right) e_+(\ast) \dot{v} e_-(\ast) e_0(C,\ast) I_E^{2m+1} \\
&=& e_-\left(\frac{\varepsilon \unife}{a^{\prime} + 1 + \varepsilon}\right) e_0\left(\frac{a^{\prime} + 1 + \varepsilon}{\varepsilon}, \frac{\varepsilon}{a^{\prime} + 1 + \varepsilon}\right) \dot{v} e_-(\ast) e_0(C,\ast) I_E^{2m+1} \\
&=& e_-\left(\frac{\varepsilon \unife}{a^{\prime} + 1 + \varepsilon}\right) \dot{v} e_-(\ast) e_0\left(\frac{\varepsilon C}{a^{\prime} + 1 + \varepsilon},\ast\right) I_E^{2m+1},
\end{eqnarray*}
\noindent whence the lemma. \qedhere
\end{proof}


\subsection{Applying a trace formula} \label{sec:applying_trace_formula}

We fix a $\overline{\bQ}_{\ell}$-valued character $\widetilde{\theta}$ of $\widetilde{\Gamma}$. Recall the $\widetilde{\Gamma}$-stable subset $\widetilde{X}_{\dot{w}}^m(1) \subseteq X_{\dot{w}}^m(1)$ from Corollary \ref{cor:smallest_tilde_gamma_stable_subscheme}. As in \cite{Ivanov_15_unram} Lemma 4.5 we have:

\begin{lm} \label{lm:reduction_to_finite_space} The natural inclusion $X_{\dot{w}}^m(1)_{P_{1/2}} \har \widetilde{X}_{\dot{w}}^m(1)$ induces an isomorphism
\[ \coh_c^0(\widetilde{X}_{\dot{w}}^m(1)_{P_{1/2}})[\widetilde{\theta}] \cong \coh_c^0(X_{\dot{w}}^m(1)_{P_{1/2}})[\widetilde{\theta}|_{\Gamma}].  \]
\end{lm}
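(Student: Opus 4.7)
The plan is to realise $\widetilde{X}_{\dot{w}}^m(1)_{P_{1/2}}$ as a disjoint union of $\widetilde{\Gamma}/\Gamma$-many translates of $X_{\dot{w}}^m(1)_{P_{1/2}}$, to identify $\coh_c^0(\widetilde{X}_{\dot{w}}^m(1)_{P_{1/2}})$ with the representation of $\widetilde{\Gamma}$ compactly induced from the $\Gamma$-representation $\coh_c^0(X_{\dot{w}}^m(1)_{P_{1/2}})$, and then to read off the lemma as the standard Frobenius/Mackey identity on $\widetilde{\theta}$-components.

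The crucial geometric input is that the stabiliser of $X_{\dot{w}}^m(1)_{P_{1/2}}$ inside $\widetilde{\Gamma}$ is exactly $\Gamma$. The inclusion $\Gamma \subseteq \mathrm{Stab}$ is Proposition \ref{prop:all_actions}(ii). For the converse, I would use Lemma \ref{lm:structure_of_Gamma} to write any element of $\widetilde{\Gamma}\setminus \Gamma$ as $i(t,r)$ with $\ord_E(t)\neq 0$. As already observed in the proof of Corollary \ref{cor:smallest_tilde_gamma_stable_subscheme} (compare Proposition \ref{prop:all_actions}(iii) and Lemma \ref{lm:action_of_beta_iota_unife}), right multiplication by $i(\unife,0)$ on $X_{\dot{w}}^m(1)_{P_{1/2}}$ agrees, up to a left factor in $I_F$, with left multiplication by $\iota(\unife)^{-1}$. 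Since $\iota(\unife) \notin I_F$ and the decomposition in Proposition \ref{prop:eADLV_Iwahori_level} is a free disjoint union indexed by $G(F)/I_F$, such an element moves $X_{\dot{w}}^m(1)_{P_{1/2}}$ into a distinct piece of the decomposition. This yields
\[
\widetilde{X}_{\dot{w}}^m(1)_{P_{1/2}} = \bigsqcup_{t \in \widetilde{\Gamma}/\Gamma} X_{\dot{w}}^m(1)_{P_{1/2}}.t.
\]

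Once this decomposition is in hand, the rest is a formal manipulation. Because both sets are discrete unions of $k$-rational points, $\coh_c^0$ is simply the space of finitely supported $\overline{\bQ}_{\ell}$-valued functions, so the decomposition above translates into a $\widetilde{\Gamma}$-equivariant isomorphism between $\coh_c^0(\widetilde{X}_{\dot{w}}^m(1)_{P_{1/2}})$ and the compact induction $\cInd\nolimits_{\Gamma}^{\widetilde{\Gamma}} \coh_c^0(X_{\dot{w}}^m(1)_{P_{1/2}})$. Passing to $\widetilde{\theta}$-components on both sides, the standard identity $(\cInd\nolimits_{\Gamma}^{\widetilde{\Gamma}} W)[\widetilde{\theta}] \cong W[\widetilde{\theta}|_\Gamma]$ gives the claim; this identity is legitimate here because $\Gamma$ is finite and acts semisimply on $W$, and the natural isomorphism is the one induced by the inclusion $X_{\dot{w}}^m(1)_{P_{1/2}} \har \widetilde{X}_{\dot{w}}^m(1)_{P_{1/2}}$ (extension by zero in one direction, restriction in the other). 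The only real obstacle is the stabiliser bookkeeping; the remaining argument is a direct analogue of the proof of \cite{Ivanov_15_unram} Lemma 4.5 that is cited in the statement.
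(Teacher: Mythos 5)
Your geometric core is right and is, in effect, all the content the paper itself supplies (the paper gives no argument beyond citing the analogue in \cite{Ivanov_15_unram} Lemma 4.5): $\Gamma$ stabilises $X_{\dot{w}}^m(1)_{P_{1/2}}$ by Proposition \ref{prop:all_actions}(ii), every class in $\widetilde{\Gamma}/\Gamma$ is represented by a power $i(\unife,0)^j$, and since $X_{\dot{w}}^m(1)_{P_{1/2}}.i(\unife,0)^j = \iota(\unife)^j.X_{\dot{w}}^m(1)_{P_{1/2}}$ sits in the piece of $\coprod_{g \in G(F)/I_F} g.X_{\dot{w}}^m(1)_{P_{1/2}}$ indexed by $\iota(\unife)^j I_F$, the translates are pairwise disjoint, giving $\widetilde{X}_{\dot{w}}^m(1)_{P_{1/2}} = \coprod_{j \in \bZ} X_{\dot{w}}^m(1)_{P_{1/2}}.i(\unife,0)^j$.

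The gap is in the final ``formal'' step. Here $\widetilde{\Gamma}/\Gamma \cong E^{\times}/U_E \cong \bZ$ is infinite, and for compact induction across a subgroup of infinite index the identity you invoke is false in the reading your maps suggest: if $[\widetilde{\theta}]$ denotes the $\widetilde{\theta}$-eigensubspace, then a nonzero $\widetilde{\theta}$-eigenvector in $\cInd\nolimits_{\Gamma}^{\widetilde{\Gamma}} W$ would have support invariant under all of $\widetilde{\Gamma}$, hence meeting infinitely many $\Gamma$-cosets, contradicting finite support; so that eigensubspace is zero. Correspondingly, extension by zero of a $\widetilde{\theta}|_{\Gamma}$-isotypic function on $X_{\dot{w}}^m(1)_{P_{1/2}}$ is \emph{not} $\widetilde{\theta}$-isotypic (translation by $i(\unife,0)$ moves its support off itself), and ``restriction'' only sees the zero space. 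The finiteness and semisimplicity of $\Gamma$, which you cite as the justification, is not where the difficulty lies. To make the lemma come out you must fix the meaning of $[\widetilde{\theta}]$ in the infinite direction: either take $\widetilde{\theta}$-isotypic vectors in the \emph{full} induction, i.e.\ functions on $\widetilde{X}_{\dot{w}}^m(1)_{P_{1/2}}$ with no support condition across the $\bZ$-many components, in which case restriction to $X_{\dot{w}}^m(1)_{P_{1/2}}$ is an isomorphism whose inverse is extension via the character $\widetilde{\theta}$ (not by zero); or read $[\widetilde{\theta}]$ as the $\widetilde{\theta}$-coinvariants of $\coh_c^0$, in which case Frobenius reciprocity for $\cInd$ together with semisimplicity of the finite group $\Gamma$ (this is where that hypothesis genuinely enters) identifies it with $\coh_c^0(X_{\dot{w}}^m(1)_{P_{1/2}})[\widetilde{\theta}|_{\Gamma}]$, the isomorphism being extension by zero followed by the projection. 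Either reading is consistent with how $V_{\widetilde{\theta}}$ is used later (e.g.\ $i(\unife,0)$ acting by the scalar $\widetilde{\theta}(i(\unife,0))$ in Proposition \ref{prop:traces_in_general}); your version, taken literally, makes the left-hand side zero.
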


We need some notation. We write 
\[
\begin{aligned}
V_{\widetilde{\theta}} &:= \coh_c^0(\widetilde{X}_{\dot{w}}^m(1))[\widetilde{\theta}] \\
\Xi_{\widetilde{\theta}} &:= \text{the $\iota(E^{\times})I_F$-representation in $V_{\widetilde{\theta}}$}.
\end{aligned}
\]
\noindent Note that $V_{\widetilde{\theta}}$ is a finite-dimensional $\overline{\bQ}_{\ell}$-vector space.

%

Let $g \in \iota(E^{\times})I_F$. Note that $\beta_g$ (introduced in Section \ref{sec:group_actions}) induces an automorphism $\beta_g^{\ast}$ of $V_{\widetilde{\theta}}$. We write 
\begin{equation}\label{eq:def_A_g}
A_g := \{ a \in (\fp_E/ \fp_E^{n + d + m + 1})^{\ast} \colon \beta_g(a) \equiv a \mod \fp_E^{n+d+1} \}
\end{equation}
For $a \in A_g$, we write 
\[
h(g,a) := \unife^{-(n+d+1)}(\beta_g(a) - a).
\]
\noindent This is a well-defined element of $\caO_E/\fp^m$. Further, for $a \in (\fp_E / \fp_E^{n + d + m + 1})^{\ast}$, we write
\[
\beta_{g,a}^{\prime} := C^{-1} \beta_{g,a}(C) \in U_E/U_E^{m+1}, 
\]
which depends on $g$ and $a$, but not on $C$.

\begin{prop}\label{prop:traces_in_general}
Let $g \in \iota(E^{\times}) I_F$. Then 
\[
\tr(g; \Xi_{\widetilde{\theta}}) = \frac{1}{q^m} \widetilde{\theta}(i(\unife,0))^{\ord_F(\det(g))} \cdot \sum_{a \in A_g} \widetilde{\theta}(i(t_a,\bar{r}_a)),
\]
\noindent where $i(t_a,\bar{r}_a) \in \Gamma/\Gamma^{\prime}$ is given by 
\[
\begin{aligned}
\bar{r}_a &:= R^{-1} h(g,a) (1 + \unife^n h(g,a) R^{-1})^{-1} \in \caO_E/ \fp_E^d \\
t_a &:= \beta_{g,a}^{\prime}  (1 + \unife^n h(g,a) R^{-1})^{-1} \in U_E/U_E^{m+1},
\end{aligned}
\]
with $R$ as in Proposition \ref{prop:eADLV_higher_level}.
\end{prop}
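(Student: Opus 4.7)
The plan is to identify $V_{\widetilde{\theta}}$ with a space of functions on the finite set $X_{\dot{w}}^m(1)_{P_{1/2}}$ and then apply a Lefschetz-type trace formula to the permutation $\beta_g$, which (unlike $g$ itself) stabilizes this finite set.

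By Lemma~\ref{lm:reduction_to_finite_space} I identify $V_{\widetilde{\theta}}$ with $\coh_c^0(X_{\dot{w}}^m(1)_{P_{1/2}}, \overline{\bQ}_\ell)[\widetilde{\theta}|_{\Gamma}]$, a finite-dimensional space with basis indexed by the $\Gamma$-orbits (since the right $\Gamma$-action on $X_{\dot{w}}^m(1)_{P_{1/2}}$ is free, cf.\ Section~\ref{sec:sketch_of_situation}). Writing $r = \ord_F(\det g)$, the defining relation $\beta_g(x) = g x \cdot i(\unife,0)^{-r}$ combined with the $\widetilde{\theta}$-equivariance of functions in this subspace shows that on $V_{\widetilde{\theta}}$ the action of $g$ factors as $\widetilde{\theta}(i(\unife,0))^{r}$ (a scalar coming from the right-translation by $i(\unife,0)^{-r}$) times the permutation action induced by $\beta_g$. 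The map $\beta_g$ need not commute with the right $\Gamma$-action literally, but the defect lies in the commutator subgroup $\Gamma'$ on which $\widetilde{\theta}$ is trivial by Proposition~\ref{prop:structure_of_Gamma_tilde}(i); consequently $\beta_g$ induces a well-defined permutation of the set of $\Gamma$-orbits, compatible with the isotypic decomposition.

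The standard Lefschetz-type identity on a free $\Gamma$-equivariant finite set then yields
\[
\tr\bigl(\beta_g^{\ast};\,\coh_c^0(X_{\dot{w}}^m(1)_{P_{1/2}})[\widetilde{\theta}|_\Gamma]\bigr) \;=\; \frac{1}{|\Gamma|} \sum_{\substack{x \in X_{\dot{w}}^m(1)_{P_{1/2}} \\ \beta_g(x) \in \Gamma\cdot x}} \widetilde{\theta}(s_x),
\]
where $s_x \in \Gamma$ is defined by $\beta_g(x) = x \cdot s_x$, well-defined modulo $\Gamma'$ on each $\Gamma$-orbit (so that $\widetilde{\theta}(s_x)$ is unambiguous). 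It remains to make both the fixed-orbit condition and the values $\widetilde{\theta}(s_x)$ explicit in coordinates.

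In the coordinates $(a,C)$ from Proposition~\ref{prop:eADLV_higher_level}, the right $\Gamma$-action preserves $a$ modulo $\fp_E^{n+d+1}$ (Proposition~\ref{prop:all_actions}(ii)), so the fixed-orbit condition $\beta_g(x) \in \Gamma\cdot x$ reduces to $\beta_g(a) \equiv a \pmod{\fp_E^{n+d+1}}$, i.e.\ $a \in A_g$. Solving the equation $\beta_g(x) = x\cdot i(t_x, r_x)$ for $(t_x, r_x)$ by comparing Proposition~\ref{prop:all_actions}(i),(iii) (giving $\beta_g$ on coordinates) with Proposition~\ref{prop:all_actions}(ii) (giving the right $\Gamma$-action) produces, after a direct computation, precisely the closed forms $t_a$ and $\bar{r}_a$ stated in the proposition; crucially, the $C$-variables cancel so that $s_x \bmod \Gamma'$ depends only on $a$. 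The remaining inner sum over $C \in U_E/U_E^{m+1}$ therefore contributes the factor $(q-1)q^m$, which divided by $|\Gamma| = (q-1)q^{2m}$ yields the prefactor $1/q^m$ appearing in the statement. The main technical obstacle is this final algebraic step: inverting the coordinate action of $i(t_x,r_x)$ to solve for $(t_x,r_x)$ in terms of $\beta_g(a,C)$, verifying the cancellation of the $C$-dependence, and matching the resulting element of $\Gamma/\Gamma'$ with the claimed $i(t_a, \bar{r}_a)$.
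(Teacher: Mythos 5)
Your proposal follows the paper's proof essentially verbatim: the paper likewise splits off the scalar $\widetilde{\theta}(i(\unife,0))^{\ord_F(\det g)}$, applies the counting identity $\tr(\beta_g^{\ast};V_{\widetilde{\theta}})=\frac{1}{\sharp\Gamma}\sum_{\gamma\in\Gamma}\sharp\{x\colon \beta_g(x)=x.\gamma\}\,\widetilde{\theta}(\gamma)$ (quoted from Boyarchenko's Lemma 2.12 rather than re-derived), identifies the fixed-orbit condition with $a\in A_g$, and gets the prefactor $1/q^m$ from the free sum over $C\in U_E/U_E^{m+1}$ against $\sharp\Gamma=(q-1)q^{2m}$. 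The one step you defer as ``a direct computation'' is exactly where the paper works: since $H$ in Proposition \ref{prop:all_actions}(ii) contains the unknown $r$, one iterates the implicit relation to get $H^{-1}\equiv 1+\unife^{n}h(g,a)R^{-1}$, and then Lemma \ref{lm:trace_norm}(ii) shows the residual factor $\tau(C)C^{-1}$ in $r$ is $\equiv 1 \bmod \fp_E^{d}$, so that the class $i(t,\bar r)\in\Gamma/\Gamma^{\prime}$ depends only on $a$ and equals the stated $i(t_a,\bar r_a)$.
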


\begin{proof}
As $\iota(\unife,0)$ acts in $V_{\widetilde{\theta}}$ as multiplication by the scalar $\widetilde{\theta}(i(\unife,0))$, we deduce from \cite{Boyarchenko_12} Lemma 2.12,
\begin{equation}\label{eq:traces_reduction_to_finite}
\begin{aligned}
 \tr(g; \Xi_{\widetilde{\theta}}) &= \widetilde{\theta}(i(\unife,0))^{\ord_F(\det(g))} \tr(\beta_g^{\ast}; V_{\widetilde{\theta}}) \\
&=  \frac{1}{\sharp \Gamma}\widetilde{\theta}(i(\unife,0))^{\ord_F(\det(g))} \cdot \sum_{i(t,r) \in \Gamma} \sharp S_{g,i(t,r)} \widetilde{\theta}(i(t,r)),
\end{aligned}
\end{equation}
\noindent where $S_{g,i(t,r)}$ denote the set of all $x \in X_{\dot{w}}^m(1)_{P_{1/2}}$, such that $\beta_g(x) = x.i(t,r)$. 

According to Proposition \ref{prop:all_actions}(ii) the set $S_{g, i(t,r)}$ is equal to the set of all solutions in the variables $a \in (\fp_E/ \fp_E^{n + d + m + 1})^{\ast}$, $C \in U_E/U_E^{m+1}$ of the equations
\begin{equation}\label{eq:original_two_equations}
\begin{aligned}
\beta_g(a) &\equiv a + \unife^{n+d+1}C\tau(C)^{-1} R H^{-1} r \mod \fp_E^{n+d+m+1} \\
\beta_{g,a}^{\prime} &\equiv H^{-1} t \mod \fp_E^{m+1}
\end{aligned}
\end{equation}

\noindent Any solution must satisfy $a \in A_g$, so we may assume this. The first equation may thus be rewritten as
\begin{equation}\label{eq:first_eq_mod_m_1}
r \equiv \tau(C)C^{-1} R^{-1}h(g,a)H \mod \fp_E^m.
\end{equation}
\noindent Inserting \eqref{eq:first_eq_mod_m_1} and \eqref{eq:def_of_H} alternatingly into \eqref{eq:def_of_H} and iterating this process (use that $n > 0$), we may rewrite \eqref{eq:def_of_H} as 
\begin{equation}\label{eq:better_eq_for_H-1}
H^{-1} \equiv 1 + \unife^n h(g,a) R^{-1} \mod \fp_E^{m+1}.
\end{equation}

\noindent Inserting this into \eqref{eq:original_two_equations} (we use \eqref{eq:first_eq_mod_m_1} instead of the first equation of \eqref{eq:original_two_equations}), we get rid of $H$, and our equations get equivalent to  
\begin{equation}\label{eq:new_form_of_eqs}
\begin{aligned}
r &\equiv \tau(C)C^{-1} R^{-1} h(g,a) (1 + \unife^n h(g,a) R^{-1})^{-1} \mod \fp_E^m \\
t &\equiv \beta_{g,a}^{\prime}  (1 + \unife^n h(g,a) R^{-1})^{-1} \mod \fp_E^{m+1}
\end{aligned}
\end{equation}

\noindent Now consider the equations 
\begin{equation}\label{eq:new_form_of_eqs_red}
\begin{aligned}
\bar{r} &\equiv R^{-1} h(g,a) (1 + \unife^n h(g,a) R^{-1})^{-1} \mod \fp_E^d \\
t &\equiv \beta_{g,a}^{\prime}  (1 + \unife^n h(g,a) R^{-1})^{-1} \mod \fp_E^{m+1},
\end{aligned}
\end{equation}

\noindent obtained from \eqref{eq:new_form_of_eqs} by reducing the first equation modulo $\fp_E^d$ (and using Lemma \ref{lm:trace_norm}(ii)). These equations are attached to an element $i(t,\bar{r}) \in \Gamma/\Gamma^{\prime}$. The set of all solutions in $a \in A_g$, $C \in U_E/U_E^{m+1}$ of \eqref{eq:new_form_of_eqs_red} is equal to the union of sets $S_{g, i(t,r)}$ for $i(t,r)$ varying over all preimages of $i(t,\bar{r})$ in $\Gamma$. As $\widetilde{\theta}$ factors through $\Gamma/\Gamma^{\prime}$, and as \eqref{eq:new_form_of_eqs_red} does not depend on $C$, we deduce
\[
\tr(g; \Xi_{\widetilde{\theta}}) = \frac{1}{q^m} \widetilde{\theta}(i(\unife,0))^{\ord_F(\det(g))} \cdot \sum_{i(t,\bar{r}) \in \Gamma/\Gamma^{\prime}} \sharp S_{g,i(t,\bar{r})} \widetilde{\theta}(i(t,r)),
\]

\noindent where $S_{i(t,\bar{r})}$ is the set of solutions in the variable $a \in A_g$ of the equations \eqref{eq:new_form_of_eqs_red} (the variable $C$ being eliminated). Now the proposition follows, as (by looking at equations \eqref{eq:new_form_of_eqs_red}) each $a \in A_g$ produces exactly one element $i(t_a,\bar{r}_a)$, such that $S_{i(t_a,\bar{r}_a)} = \{a\}$. \qedhere
\end{proof}

\begin{cor} \label{cor:central_char_kernel_dim} The central character of $R_{\widetilde{\theta}}$ is $\theta|_{F^{\times}}$, $\Xi_{\widetilde{\theta}}$ is trivial on $I_F^{m+d+1}$ and the space $V_{\widetilde{\theta}}$ has dimension $(q-1)q^{n+d-1}$.
\end{cor}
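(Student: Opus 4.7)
All three claims can be read off from the trace formula of Proposition~\ref{prop:traces_in_general} by evaluating at well-chosen $g$. For the dimension, I apply the formula at $g=1$: then $\beta_1=\id$, so $A_1=(\fp_E/\fp_E^{n+d+m+1})^{\ast}$, the element $h(1,a)$ vanishes identically, and consequently $\bar r_a=0$, $t_a=1$, and $\widetilde\theta(i(t_a,\bar r_a))=1$ for every $a\in A_1$. Since $|A_1|=(q-1)q^{n+d+m-1}$, the formula collapses to $\dim V_{\widetilde\theta}=\tr(1;\Xi_{\widetilde\theta})=(q-1)q^{n+d-1}$.

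For the central character I apply the formula at $g=\iota(z)=zI$ for $z\in F^{\times}$, setting $k:=\ord_F(z)$. Since $zI$ is scalar it commutes with $y$, and $zI=i(z,0)$ in $\widetilde\Gamma$ because $\tau(z)=z$; together with the right twist $i(\unife,0)^{-2k}$ built into $\beta$, this makes $\beta_{zI}$ right multiplication by $i(z,0)\,i(\unife,0)^{-2k}=i(z\unife^{-2k},0)$ on $\widetilde X_{\dot w}^m(1)_{P_{1/2}}$ (by Proposition~\ref{prop:structure_of_Gamma_tilde}(iii)). Writing $z=\uniff^k u$ with $u\in U_F$ and using $\uniff=\varepsilon\unife^2$ yields $z\unife^{-2k}=\varepsilon^k u$, so on $V_{\widetilde\theta}$ the operator $\beta_{zI}^{\ast}$ acts as the scalar $\theta(\varepsilon^k u)$; combined with the prefactor $\widetilde\theta(i(\unife,0))^{2k}=\theta(\unife^{2k})$ in the trace formula, the total scalar is $\theta(\unife^{2k}\varepsilon^k u)=\theta(\uniff^k u)=\theta(z)$. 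Since $Z(F)\subseteq\iota(E^{\times})I_F$, the central character of $R_{\widetilde\theta}=\cInd\Xi_{\widetilde\theta}$ equals this restriction, namely $\theta|_{F^{\times}}$.

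For triviality on $I_F^{m+d+1}$, first observe that as $I_F^{m+d+1}$ is pro-$p$ and acts on the finite set $\widetilde X_{\dot w}^m(1)_{P_{1/2}}$ through a finite $p$-quotient, each operator $\rho(g)$ has finite $p$-power order on $V_{\widetilde\theta}$; in characteristic zero, $\tr(\rho(g))=\dim V_{\widetilde\theta}$ then forces $\rho(g)=\id$. It thus suffices to prove this trace identity. Translating $\fp_F^k=\fp_E^{2k}$, the entries of $g\in I_F^{m+d+1}$ satisfy $g_1-1,g_4-1,g_2\in\fp_E^{m+d+1}$ and $g_3\in\fp_E^{m+d+3}$. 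A direct estimate via Proposition~\ref{prop:all_actions}(i) yields $\beta_g(a)-a\in\fp_E^{m+d+2}\subseteq\fp_E^{n+d+1}$ (using $n\le m+1$), so $A_g$ is the full set and $h(g,a)\in\fp_E^{n+d}$, whence $\bar r_a\equiv 0\pmod{\fp_E^d}$. Moreover $\beta'_{g,a}=\det(g)/(g_2a+g_1)\in 1+\fp_E^{m+d+1}$, and the correction factor $(1+\unife^n h(g,a)R^{-1})^{-1}\in 1+\fp_E^{2n+d}=1+\fp_E^{m+1}$, so $t_a\in U_E^{m+1}$; since $\theta$ has level $m$, every summand equals $1$ and the trace returns $(q-1)q^{n+d-1}=\dim V_{\widetilde\theta}$.

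The only nonroutine ingredient in the above is the interplay of levels in the third step: the relation $m+1=2n+d$ fixed in Section~\ref{sec:numerical_consideration} is precisely what places $\unife^n h(g,a)R^{-1}$ in $\fp_E^{m+1}$, so that the correction factor is killed by $\theta$; a different balance between $m$ and $n$ would leave residual terms in $t_a$ at level exactly $m$ and the argument would break.
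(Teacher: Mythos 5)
Your argument is correct and follows essentially the same route as the paper's own (terse) proof: the dimension and the triviality on $I_F^{m+d+1}$ are read off from Proposition~\ref{prop:traces_in_general} (at $g=1$, respectively via the estimate $h(g,a)\equiv 0 \bmod \fp_E^{n+d}$ together with a trace-equals-dimension argument), and the central character comes from the coincidence of left and right multiplication by central elements plus the fact that $(\widetilde{\Gamma}/\Gamma^{\prime})_{diag}$ acts through $\theta$. One small correction: $\widetilde{X}_{\dot{w}}^m(1)_{P_{1/2}}$ is \emph{not} finite (it carries the action of the infinite group $\widetilde{\Gamma}$), so for the finite-order claim you should instead use the finite $I_F$-stable set $X_{\dot{w}}^m(1)_{P_{1/2}}$ on which $\Xi_{\widetilde{\theta}}|_{I_F}$ is realized (Lemma~\ref{lm:reduction_to_finite_space}), or simply smoothness of the finite-dimensional representation $V_{\widetilde{\theta}}$, either of which makes $\rho(g)$ of finite order so that $\tr(\rho(g))=\dim V_{\widetilde{\theta}}$ forces $\rho(g)=\id$.
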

\begin{proof} The action of the central elements of $G(F)$ by left and right multiplication coincide, and the subgroup of $\widetilde{\Gamma}$ consisting of the diagonal matrices with entries in $F^{\times}$ acts in $R_{\widetilde{\theta}}$ via the character $\theta$. For $g \in I_F^{m+d+1}$, one easily checks that $h(g,a) \equiv 0 \mod \fp_E^{n+d}$, and the statement follows by applying Proposition \ref{prop:traces_in_general}.
\end{proof}


\subsection{Properties of $R_{\widetilde{\theta}}$}\label{sec:irred_and_cusp} 

Recall that $\theta = \tilde{\theta}|_{(\widetilde{\Gamma}/\Gamma^{\prime})_{diag}}$ is minimal of level $m$, in particular, its restriction to $U_E^m/U_E^{m+1}$ is non-trivial. For convenience, we introduce the following notation
\[
\delta := \left\lfloor \frac{n+1}{2} \right\rfloor - \left\lfloor \frac{n}{2} \right\rfloor,
\]

\begin{prop}\label{prop:unipotent_traces}
Let $g = e_-(u) \in I_F$ with $u \in \fp_F$. Then
\[
\tr(g; \Xi_{\widetilde{\theta}}) =
\begin{cases}
 0 & \text{if $\ord_F(u) < n+d$} \\ 
 -q^{n+d-1} & \text{if $\ord_F(u) = n+d$}\\
 (q-1)q^{n+d-1} &\text{if $\ord_F(u) \geq n+d+1$}
\end{cases}
\]
\end{prop}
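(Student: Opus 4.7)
The plan is to apply Proposition \ref{prop:traces_in_general} to $g = e_-(u)$. We have $\det g = 1$ (so the prefactor $\widetilde{\theta}(i(\unife,0))^{\ord_F(\det g)} = 1$), and from Proposition \ref{prop:all_actions}(i), $\beta_g(a) = a + u$ and $\beta_g(C) = C$, so $\beta_{g,a}' = 1$ and $h := h(g,a) = \unife^{-(n+d+1)}u$ is independent of $a$. Consequently $A_g$ is either empty (when $u \notin \fp_E^{n+d+1}$, i.e.\ $\ord_F(u) < \lceil(n+d+1)/2\rceil$) or the entire set $(\fp_E/\fp_E^{n+d+m+1})^{\ast}$. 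The first alternative handles part of the case $\ord_F(u) < n+d$, giving trace $0$ directly.

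For the non-empty cases I would substitute $R' := R + \unife^n h$, so that $t_a = R/R'$ and $\bar{r}_a = h/R' \bmod \fp_E^d$, and then factorise the character as $\widetilde{\theta}(i(t_a, \bar{r}_a)) = \theta(t_a)\psi(\bar{r}_a)$ using Proposition \ref{prop:structure_of_Gamma_tilde}, where $\theta = \widetilde{\theta}|_{(\widetilde{\Gamma}/\Gamma')_{diag}}$ and $\psi$ is the piece of $\widetilde{\theta}$ on $\caO_E/\fp_E^d$. If $\ord_F(u) \geq n+d+1$, then $\ord_E(h) \geq n+d+1 > d$, so $\bar{r}_a = 0$ and $\unife^n h R^{-1} \in \fp_E^{m+1}$ forces $t_a \equiv 1 \bmod U_E^{m+1}$; the trace becomes $|A_g|/q^m = (q-1)q^{n+d-1}$. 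If $\ord_F(u) = n+d$, using $\uniff = \unife^2\varepsilon$ one finds $h = \unife^{n+d-1}\varepsilon^{n+d}u_0$ with $u_0 \in U_F$, so $\ord_E(h) = n+d-1 \geq d$ (here $n \geq 1$), and again $\bar{r}_a = 0$. A short computation shows $\unife^n h R^{-1}$ has valuation exactly $m$, with leading coefficient $\bar{u}_0\bar{\varepsilon}_0^{-1}(\bar{a'})^{-1}$, using $R \equiv \varepsilon_0\tau(a') \bmod \fp_E$ (which follows from Lemma \ref{lm:trace_norm}(i) applied to $\tr_{E/F}(a') \in \fp_F^{(d+1)/2}$ together with $\varepsilon = 1 + \unife^d\varepsilon_0$). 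Minimality of $\theta$ makes $\theta|_{U_E^m/U_E^{m+1}}$ a nontrivial additive character $\psi_\theta$ of $k$, so, separating the leading digit $\bar{a'} \in k^{\times}$ of $a'$ from the remaining $q^{n+d+m-1}$ deeper digits (on which the character value no longer depends), the sum collapses to $q^{n+d+m-1}\sum_{x \in k^{\times}}\psi_\theta(c/x) = -q^{n+d+m-1}$ for $c = \bar{u}_0\bar{\varepsilon}_0^{-1}$, giving trace $-q^{n+d-1}$.

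The remaining range $\lceil(n+d+1)/2\rceil \leq \ord_F(u) \leq n+d-1$ is the hard part. Here $e := \ord_E(h) \in [0, n+d-3]$, so $\widetilde{\theta}(i(t_a, \bar{r}_a))$ genuinely depends on $a'$ through $R(a')$ at an intermediate depth. The structural fact to exploit is that the $F$-linear map $\phi(a') := a' + \varepsilon\tau(a')$ has image $\caO_F \cdot \unife^d\varepsilon_0$ (its kernel on $E$ being $F \cdot \unife^{-1}$), so $R = \unife^{-d}\phi(a') \in \varepsilon_0\caO_F$ and the sum converts, with equal-sized fibres, into a character sum over a quotient of $F^{\times}$. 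One then isolates a nontrivial additive character contribution by examining $\theta(t_a)\psi(\bar{r}_a)$ at the highest level where it remains non-constant and invoking the minimality of $\theta$ at that level. This final cancellation step — pinning down the precise layer of the filtration responsible for the decisive Gauss-type identity — is the main obstacle I expect.
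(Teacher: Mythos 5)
Your reduction via Proposition \ref{prop:traces_in_general} is the same as the paper's, and three of the four regimes are handled correctly and essentially as in the paper: $A_g=\emptyset$ (hence trace $0$) for $\ord_F(u)<\lceil\tfrac{n+d+1}{2}\rceil$; for $\ord_F(u)\geq n+d+1$ the constancy $i(t_a,\bar r_a)=i(1,0)$ gives $(q-1)q^{n+d-1}$; and for $\ord_F(u)=n+d$ your direct computation ($\ord_E(h)=n+d-1$, $\bar r_a=0$, $\unife^n hR^{-1}$ of valuation exactly $m$ with leading term governed by $\bar a'$, then a Kloosterman-free sum over $k^{\times}$ equal to $-1$) is a legitimate shortcut to $-q^{n+d-1}$, consistent with the paper's boundary case. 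The genuine gap is the intermediate regime $\lceil\tfrac{n+d+1}{2}\rceil\leq\ord_F(u)\leq n+d-1$, which is exactly the hard part of the assertion $\tr=0$ and which you explicitly leave as a sketch. Your proposed route (push the sum to a quotient of $F^{\times}$ via $a'\mapsto a'+\varepsilon\tau(a')$ and ``invoke minimality of $\theta$ at the right level'') is missing the two ingredients that actually make the cancellation work in the paper. First, one needs the equidistribution statement of Lemma \ref{lm:technical_lm_for_unipotent_traces}: the (neither additive nor multiplicative) map $a\mapsto R^{-1}h(g,a)$ surjects onto the set $B_{\alpha}^{\ast}\subseteq\fp_E^{\delta+2\alpha}/\fp_E^{n+d}$ with all fibers of equal cardinality; without equal fibers the sum is not proportional to a character sum.

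Second, and more seriously, your factorization $\widetilde{\theta}(i(t_a,\bar r_a))=\theta(t_a)\psi(\bar r_a)$ with $\psi$ a character of $\caO_E/\fp_E^d$ is not available: the extension $0\rightarrow(\widetilde{\Gamma}/\Gamma^{\prime})_{diag}\rightarrow\widetilde{\Gamma}/\Gamma^{\prime}\rightarrow\caO_E/\fp_E^d\rightarrow0$ does not split in general (Proposition \ref{prop:structure_of_Gamma_tilde}(iii) gives $i(1,\bar r)i(1,\bar u)=i(1+\unife^{2n}ru,\bar r+\bar u)$), so $\bar r\mapsto\widetilde{\theta}(i(1,\bar r))$ is not additive; in the intermediate range $\bar r_a\neq 0$, so this matters. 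What rescues the argument in the paper is Lemma \ref{lm:kind_of_splitting}: the specific combination $x\mapsto i(t_x,\bar r_x)$ with $t_x=(1+\unife^n\varepsilon^{\frac{d+1}{2}}x)^{-1}$, $\bar r_x=x(1+\unife^n x)^{-1}$ \emph{is} a homomorphism on $\caO_E/\fp_E^{n+d}$ (this uses $2n>d$ and $m\leq 2n+d-1$), so $x\mapsto\widetilde{\theta}(i(t_x,\bar r_x))$ is a genuine character $\widetilde{\theta}^{\prime}$ of $B_{\alpha}$. Its non-triviality is then checked not ``at the level where it is non-constant'' (which would involve the unknown lift $\widetilde{\theta}$ of $\theta$), but on the deepest layer $\fp_E^{n+d-1}/\fp_E^{n+d}\subseteq B_{\alpha}$, where $i(t_x,\bar r_x)$ lands in $U_E^m/U_E^{m+1}\subseteq(\widetilde{\Gamma}/\Gamma^{\prime})_{diag}$ and only $\theta$ of exact level $m$ enters; combined with $B_{\alpha}^{\ast}=B_{\alpha}\sm B_{\alpha+1}$ this forces the sum to vanish for $\alpha<\lfloor\frac{n}{2}\rfloor+\frac{d+1}{2}-1$ (and reproduces $-q^{n+d-1}$ at the boundary). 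Until you supply analogues of these two lemmas, the case $\lceil\tfrac{n+d+1}{2}\rceil\leq\ord_F(u)\leq n+d-1$ remains unproved.
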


\begin{proof}
We apply Proposition \ref{prop:traces_in_general} and use the notations from there. First we show the following simple lemma. 

\begin{lm} \label{lm:comp_of_h_for_unipotents} 
Let $g = e_-(u) \in I_F$ with $u \in \fp_F$. Then the following are equivalent
\begin{itemize}
\item[(i)] $A_g \neq \emptyset$
\item[(ii)] $A_g = (\fp_E/\fp_E^{n+d+m+1})^{\ast}$
\item[(iii)] $\ord_F(u) \geq \lfloor \frac{n+1}{2} \rfloor + \frac{d+1}{2}$.
\end{itemize} 
\noindent If these conditions hold and if we write $u = \uniff^{\lfloor \frac{n+1}{2} \rfloor + \frac{d+1}{2} + \alpha} u_0$ with $u_0 \in U_F$, then 
\[ h(g,a) = \unife^{\delta + 2\alpha} \varepsilon^{\lfloor \frac{n+1}{2} \rfloor + \frac{d+1}{2} + \alpha} u_0.
\]
\end{lm}
\begin{proof} We have $g.a = a + u$, i.e. $A_g \neq \emptyset \LRar u \equiv 0 \mod \fp_E^{n+d+1} \LRar u \equiv 0 \mod \fp_F^{\lfloor \frac{n+1}{2} \rfloor + \frac{d+1}{2}}$. If this holds, then $A_g = (\fp_E/\fp_E^{n+d+m+1})^{\ast}$. The last statement is clear by definition of $h(g,a)$.
\end{proof}

We continue with the proof of Proposition \ref{prop:unipotent_traces}. If $\ord_F(u) < \lfloor \frac{n+1}{2} \rfloor + \frac{d+1}{2}$, then $A_g = \emptyset$ by Lemma \ref{lm:comp_of_h_for_unipotents} and the statement is immediate. Thus we may assume $\ord_F(u) = \lfloor \frac{n+1}{2} \rfloor + \frac{d+1}{2} + \alpha$ with $\alpha \geq 0$, and in particular, $A_g = (\fp_E/\fp_E^{n+d+m+1})^{\ast}$. 

Assume that $\alpha \geq \lfloor \frac{n}{2} \rfloor + \frac{d+1}{2}$ (i.e., $\ord_F(u) \geq n+d+1$). Then $\delta + 2\alpha \geq n+d+1$, and applying Lemma \ref{lm:comp_of_h_for_unipotents} shows that for each $a \in A_g$, $t_a = 1 \in U_E/U_E^{m+1}$ and $\bar{r}_a  = 0 \in \caO_E/\fp^d$, which shows that $a \mapsto i(t_a, \bar{r}_a)$ is the constant map sending all of $A_g$ to the neutral element $i(1,0) \in \Gamma/\Gamma^{\prime}$. As $\sharp A_g = (q-1)q^{n+d+m-1}$, the statement follows also in this case.

It remains to deal with the case $0 \leq \alpha \leq \lfloor \frac{n}{2} \rfloor + \frac{d+1}{2} - 1$. Set 

\[
\begin{aligned}
B_{\alpha} &:= \left\{ x \in \fp_E^{\delta + 2\alpha}/\fp_E^{n+d} \colon \tau(x) = \varepsilon^{-n}x \right\} \\
B_{\alpha}^{\ast} &:= B_{\alpha} \cap \left(\fp_E^{\delta + 2\alpha}/\fp_E^{n+d}\right)^{\ast}.
\end{aligned}
\]

\begin{lm}\label{lm:technical_lm_for_unipotent_traces} The assignment $a \mapsto h(g,a)R^{-1}$ induces a map
\[
(\fp_E/\fp_E^{n+d+m+1})^{\ast} \tar B_{\alpha}^{\ast},
\]
\noindent with all fibers of cardinality $q^{n+d+m - \lfloor \frac{n+d+1}{2} \rfloor + \alpha}$. 
\end{lm}
\begin{proof}
Applying Lemma \ref{lm:trace_norm}(i) several times shows that the trace of $E/F$ composed with multiplication by $\uniff^{-\frac{d+1}{2}}$ induces a surjective map 
\[ 
\uniff^{-\frac{d+1}{2}} \overline{\Tr}_{E/F} \colon \left(\fp_E/\fp_E^{n+d+m+1}\right)^{\ast} \tar \left(\caO_F/\fp_F^{\lfloor \frac{n+d+1}{2} \rfloor}\right)^{\ast} = U_F/U_F^{\lfloor \frac{n+d+1}{2} \rfloor},
\]
\noindent which is the restriction of a homomorphism on the same spaces without $\ast$'s. In particular, all fibers have the same cardinality, equal to $q^{n+d+m - \lfloor \frac{n+d+1}{2} \rfloor}$. Multiplying the map $\uniff^{-\frac{d+1}{2}} \overline{\Tr}_{E/F}$ by the invertible factor $\varepsilon^{\frac{d+1}{2}}$ and then inverting, we obtain the map (induced by) $a \mapsto R^{-1} = (\unife^{-(d+1)}(a + \tau(a)))^{-1}$. More precisely, we have $\caO_F/\fp_F^{\lfloor \frac{n+d+1}{2} \rfloor} \subseteq \caO_E/\fp_E^{n+d}$ and $a \mapsto R^{-1}$ induces  
\[
\left(\fp_E/\fp_E^{n+d+m+1}\right)^{\ast} \tar \varepsilon^{-\frac{d+1}{2}} \left(\caO_F/\fp_F^{\lfloor \frac{n+d+1}{2} \rfloor}\right)^{\ast} \subseteq \varepsilon^{-\frac{d+1}{2}} \left(\caO_F/\fp_F^{\lfloor \frac{n+d+1}{2} \rfloor}\right) \subseteq \caO_E/\fp_E^{n+d},
\]
\noindent with (non-empty) fibers still of cardinality $q^{n+d+m - \lfloor \frac{n+d+1}{2} \rfloor}$. (Note that the map $a \mapsto R^{-1}$ is neither additive, nor multiplicative). Observe that 
\[ 
\varepsilon^{-\frac{d+1}{2}} \left(\caO_F/\fp_F^{\lfloor \frac{n+d+1}{2} \rfloor}\right) = \left\{ x \in \caO_E/\fp_E^{n+d} \colon \tau(x) = \varepsilon^{\frac{d+1}{2}}x \right\} \subseteq \caO_E/\fp_E^{n+d}.
\]
Multiplication by $\varepsilon^{\lfloor \frac{n+1}{2} \rfloor + \frac{d+1}{2} + \alpha} u_0$ maps this subgroup isomorphically onto 
\[
\varepsilon^{\lfloor \frac{n+1}{2} \rfloor + \alpha}\left(\caO_F/\fp_F^{\lfloor \frac{n+d+1}{2} \rfloor}\right) = \left\{ x \in \caO_E/\fp_E^{n+d} \colon \tau(x) = \varepsilon^{- 2 \lfloor \frac{n+1}{2} \rfloor - 2\alpha}x \right\} \subseteq \caO_E/\fp_E^{n+d}
\]
\noindent (as $u_0 \in \caO_F$), preserving the ${}^{\ast}$-subsets.  Now, multiplication by $\unife^{\delta + 2\alpha}$ maps this surjectively onto 
\[
B_{\alpha} \subseteq \fp_E^{\delta + 2\alpha}/\fp_E^{n+d},
\]
\noindent preserving the ${}^{\ast}$-subspaces. Moreover, the fibers all have cardinality $q^{\alpha}$ (being equal to the cardinality of the multiplication-by-$\unife^{\delta + 2\alpha}$ map $\caO_F/\fp^{\lfloor \frac{n+d+1}{2} \rfloor} \rar \fp_E^{\delta + 2\alpha}/\fp_E^{n+d}$). Putting all this together, we see that $a \mapsto R^{-1}h(g,a)$ in fact induces a map 
\[
\left(\fp_E/\fp_E^{n+d+m+1}\right)^{\ast} \tar B_{\alpha}^{\ast},
\]
\noindent whose fibers all have the same cardinality, equal to $q^{n+d+m - \lfloor \frac{n+d+1}{2} \rfloor + \alpha}$. This finishes the proof of the lemma. \qedhere
\end{proof}

Using Lemma \ref{lm:technical_lm_for_unipotent_traces} we may replace $a$ in the formulae in Proposition \ref{prop:traces_in_general} by $x := R^{-1}h(g,a)$. More precisely, we have
\[ 
\tr(g; \Xi_{\chi}) = q^{n+d - \lfloor \frac{n+d+1}{2} \rfloor + \alpha} \cdot \sum_{x \in B_{\alpha}^{\ast}} \widetilde{\theta}(i(t_x,\bar{r}_x)),
\]
\noindent where $i(t_x,\bar{r}_x) \in \Gamma/\Gamma^{\prime}$ is given by 
\[
\begin{aligned}
\bar{r}_x &:= x(1 + \unife^n x)^{-1} \in \caO_E/ \fp_E^d \\
t_x &:= (1 + \unife^n \varepsilon^{-\frac{d+1}{2}} x)^{-1} \in U_E/U_E^{m+1},
\end{aligned}
\]
\noindent Define $\widetilde{\theta}^{\prime}$ on $B_{\alpha}$ by setting $\widetilde{\theta}^{\prime}(x) := \widetilde{\theta}(i(t_x,\bar{r}_x))$. By Lemma \ref{lm:kind_of_splitting}, $\widetilde{\theta}^{\prime}$ is a character of $B_{\alpha}$. Moreover, $\widetilde{\theta}^{\prime}$  is non-trivial: indeed, $B_{\alpha}$ contains $B_{\lfloor \frac{n}{2} \rfloor + \frac{d+1}{2} - 1} = \fp_E^{n+d-1}/\fp_E^{n+d}$ (the condition $\tau(x) = \varepsilon^{-n}x$ gets empty here) and when $x = \unife^{n+d-1}x_0$ runs through $B_{\lfloor \frac{n}{2} \rfloor + \frac{d+1}{2} - 1}$, $i(t_x,\bar{r}_x) = i((1 + \unife^m x_0), 0)$ runs through $U_E^m/U_E^{m+1} \subseteq E^{\times}/U_E^{m+1} = (\widetilde{\Gamma}/\Gamma^{\prime})_{diag}$, and by assumption, $\widetilde{\theta}$ is non-trivial there. 

We thus have
\[ 
\tr(g; \Xi_{\chi}) = q^{n+d - \lfloor \frac{n+d+1}{2} \rfloor + \alpha} \cdot \sum_{x \in B_{\alpha}^{\ast}} \widetilde{\theta}^{\prime}(x),
\]
\noindent Observing that $B_{\alpha}^{\ast} = B_{\alpha} \sm B_{\alpha + 1}$, we deduce $\tr(g;\Xi_{\alpha}) = 0$ for $0 \leq \alpha < \lfloor \frac{n}{2} \rfloor + \frac{d+1}{2} - 1$. For $\alpha = \lfloor \frac{n}{2} \rfloor + \frac{d+1}{2} - 1$, we have
\[ \tr(g;\Xi_{\alpha}) = q^{n+d - \lfloor \frac{n+d+1}{2} \rfloor + (\lfloor \frac{n}{2} \rfloor + \frac{d+1}{2} - 1)} \cdot \sum_{x \in B_{\lfloor \frac{n}{2} \rfloor + \frac{d+1}{2} - 1}^{\ast}} \widetilde{\theta}(x) = - q^{n + d - 1}. \qedhere \]
\end{proof}

For $\alpha \geq 1$, let $N^{\alpha}$ be the subgroup of $I_F$ consisting of all lower triangular matrices with $1$'s on the main diagonal, such that the lower entry has valuation $\geq \alpha$.  Let $B$ be the Borel subgroup of lower triangular matrices of $G$.
 
\begin{cor}\label{cor:Xi_theta_as_N1-rep}
As $N^1$-representation, $\Xi_{\widetilde{\theta}}$ is the direct sum over all characters of $N^1$, which are trivial on $N^{n+d+1}$ and non-trivial on $N^{n+d}$. Moreover, $\Xi_{\widetilde{\theta}}$ is an irreducible $B(F) \cap I_F$ representation.
\end{cor}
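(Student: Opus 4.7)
\begin{pro*}[Proof plan for Corollary \ref{cor:Xi_theta_as_N1-rep}]
The plan is to deduce both assertions from the trace computation in Proposition \ref{prop:unipotent_traces} together with the dimension formula $\dim V_{\widetilde{\theta}} = (q-1)q^{n+d-1}$ from Corollary \ref{cor:central_char_kernel_dim}, using only elementary character theory for the abelian pro-$p$ group $N^1$ and Clifford's theorem for the semidirect product $B(F) \cap I_F = T(\caO_F) \ltimes N^1$.

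For the first statement, observe that every element of $N^{n+d+1}$ has trace equal to $\dim V_{\widetilde{\theta}}$ by Proposition \ref{prop:unipotent_traces}, so (being of finite order modulo the kernel of the representation) it acts trivially. Hence $\Xi_{\widetilde{\theta}}|_{N^1}$ factors through the finite abelian quotient $N^1/N^{n+d+1} \cong \fp_F/\fp_F^{n+d+1}$, and decomposes as a sum of characters of this quotient. By the inner-product formula, the multiplicity of a character $\psi$ equals $|N^1/N^{n+d+1}|^{-1} \sum_{u} \tr(e_-(u); \Xi_{\widetilde{\theta}}) \overline{\psi(e_-(u))}$. One checks, using the three values given by Proposition \ref{prop:unipotent_traces} and the orthogonality relations on $\fp_F/\fp_F^{n+d+1}$ and on its subquotient $\fp_F/\fp_F^{n+d}$, that this multiplicity equals $1$ precisely when $\psi$ is trivial on $N^{n+d+1}$ and non-trivial on $N^{n+d}$, and equals $0$ otherwise. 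The number of such characters is $q^{n+d} - q^{n+d-1} = (q-1)q^{n+d-1} = \dim V_{\widetilde{\theta}}$, confirming consistency.

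For irreducibility as $B(F) \cap I_F$-representation, write $B(F) \cap I_F = T(\caO_F) \ltimes N^1$. A direct computation $t^{-1} e_-(u) t = e_-(d^{-1} a u)$ for $t = \diag(a,d) \in T(\caO_F)$ shows that the conjugation action of $T(\caO_F)$ on $N^1$ factors through multiplication by $U_F = da^{-1} \cdot U_F$, and hence the dual action on characters of $N^1/N^{n+d+1}$ is given, after fixing an additive character of $F$, by multiplication by $U_F$ on the parameter. Since $U_F$ acts transitively on $\uniff^{-\text{(constant)}} U_F \mod U_F^{n+d+1}$, the set of characters appearing in $\Xi_{\widetilde{\theta}}|_{N^1}$ (namely, those of precise level $n+d$) forms a single $T(\caO_F)$-orbit. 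Combined with the multiplicity-one statement above, any $B(F) \cap I_F$-stable subspace of $\Xi_{\widetilde{\theta}}$ is a sum of isotypic components for $N^1$ indexed by a $T(\caO_F)$-stable subset of this orbit, hence either zero or the whole space.

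The main technical step is the character-theoretic computation in the first paragraph, but it reduces to a very short orthogonality check once one notes that the three values of $\tr(e_-(u); \Xi_{\widetilde{\theta}})$ in Proposition \ref{prop:unipotent_traces} are exactly the values one gets by summing all characters of $\fp_F/\fp_F^{n+d+1}$ of precise level $n+d$. The transitivity of the $T(\caO_F)$-action, which drives the irreducibility, is then essentially automatic from the fact that the dual action is by multiplication by $U_F$.
\end{pro*}
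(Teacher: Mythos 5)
Your proposal is correct and follows essentially the same route as the paper: the paper deduces the first statement directly from the trace values in Proposition \ref{prop:unipotent_traces} (your orthogonality computation just spells this out, and the count $(q-1)q^{n+d-1}$ matches $\dim V_{\widetilde{\theta}}$ from Corollary \ref{cor:central_char_kernel_dim}), and the second statement via the same Clifford-type argument for $B(F)\cap I_F = T(\caO_F)\ltimes N^1$ that the paper delegates to \cite{Ivanov_15_unram}, Corollary 4.12. No gaps; you have simply written out the details the paper handles by citation.
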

\begin{proof} 
The first statement immediately follows from Proposition \ref{prop:unipotent_traces}. The second follows from the first as in \cite{Ivanov_15_unram} Corollary 4.12.
\end{proof}

\begin{cor}\label{cor:irred_level_minimality}
The representation $R_{\widetilde{\theta}}$ is irreducible, cuspidal and admissible. It contains a ramified simple stratum and is, in particular, ramified. Its level is $\ell(R_{\widetilde{\theta}}) = \frac{m+d}{2}$. For any character $\phi$ of $F^{\times}$, one has $0 < \ell(R_{\widetilde{\theta}}) \leq \ell(\phi R_{\widetilde{\theta}})$.
\end{cor}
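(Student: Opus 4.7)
\begin{pro*}
The representation $R_{\widetilde{\theta}}$ is built as a compact induction from $\iota(E^{\times})I_F$, an open subgroup that is compact modulo the centre $Z(F) = F^{\times}$, with inducing representation $\Xi_{\widetilde{\theta}}$ of finite dimension $(q-1)q^{n+d-1}$ (Corollary \ref{cor:central_char_kernel_dim}). My plan is therefore to verify the Bushnell--Kutzko hypotheses under which such a compact induction yields an irreducible supercuspidal representation, and then to read off the level, the minimality and the fact that $\Xi_{\widetilde{\theta}}$ realises a ramified simple stratum directly from the structural results already established.

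First I would prove irreducibility and supercuspidality. Irreducibility of $\Xi_{\widetilde{\theta}}$ as an $\iota(E^{\times})I_F$-representation is already in hand: it restricts irreducibly even to $B(F)\cap I_F$ by Corollary \ref{cor:Xi_theta_as_N1-rep}. To pass from irreducibility of $\Xi_{\widetilde{\theta}}$ to irreducibility of $\cInd\Xi_{\widetilde{\theta}}$, the standard criterion (\cite{BushnellH_06}\S11) requires that the $G(F)$-intertwining of $\Xi_{\widetilde{\theta}}$ collapse to $\iota(E^{\times})I_F$. I would establish this by a Mackey/double-coset computation: decompose $G(F) = \iota(E^{\times})I_F \cdot W_{\rm aff} \cdot \iota(E^{\times})I_F$ via the affine Bruhat decomposition at $P_{1/2}$ and show, using the explicit $N^1$-spectrum of $\Xi_{\widetilde{\theta}}$ from Corollary \ref{cor:Xi_theta_as_N1-rep}, that for any non-trivial double coset the two $N^1$-characters forced on a hypothetical intertwining operator are incompatible. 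Cuspidality is then automatic: the $N^1$-spectrum consists of non-trivial characters, so $\Xi_{\widetilde{\theta}}$ has no non-zero $N$-fixed vectors, and a Mackey decomposition of the Jacquet module of $\cInd\Xi_{\widetilde{\theta}}$ along $B(F)$-orbits on $G(F)/\iota(E^{\times})I_F$ forces the Jacquet module to vanish. Admissibility is then free: either invoke that irreducible smooth $\GL_2(F)$-representations are automatically admissible, or use finite-dimensionality of $\Xi_{\widetilde{\theta}}$ and compact-mod-centre induction.

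For the level and the simple stratum I would argue as follows. Corollary \ref{cor:central_char_kernel_dim} gives the upper bound $\ell(R_{\widetilde{\theta}}) \leq (m+d)/2$ via triviality of $\Xi_{\widetilde{\theta}}$ on $I_F^{m+d+1}$. Proposition \ref{prop:unipotent_traces} supplies the matching lower bound: for $g = e_-(u)$ with $\ord_F(u) = n+d$ the trace is $-q^{n+d-1} \neq 0$, so $\Xi_{\widetilde{\theta}}$ is non-trivial on $I_F^{m+d}$. The same proposition, combined with Corollary \ref{cor:Xi_theta_as_N1-rep}, identifies the $N^1$-characters of conductor exactly $n+d+1$ in $\Xi_{\widetilde{\theta}}$; the nondegeneracy of these characters together with the fact that $\widetilde{\theta}$ restricted to the diagonal part corresponds (via $\beta$ in Proposition \ref{prop:beta}) to a character of $E^{\times}$ of level $m$ exhibits $\Xi_{\widetilde{\theta}}$ as containing the simple stratum $[\mathfrak{A},m+d,m+d-1,\beta]$ with $F[\beta] = \iota(E)$, hence ramified. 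Finally, for minimality $\ell(R_{\widetilde{\theta}}) \leq \ell(\phi R_{\widetilde{\theta}})$: the character $\phi_G = \phi\circ\det$ is trivial on every unipotent subgroup $N^{\alpha}$, so twisting by $\phi_G$ leaves the $N^1$-spectrum of $\Xi_{\widetilde{\theta}}$ unchanged. Since the lower bound on the level was produced entirely from this $N^1$-spectrum, it survives any such twist.

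The main obstacle will be Step 1: controlling the intertwining $I_g(\Xi_{\widetilde{\theta}})$ for $g$ varying over representatives of $\iota(E^{\times})I_F \backslash G(F) / \iota(E^{\times})I_F$. The geometric picture makes the action of $I_F$ and of $\iota(E^{\times})$ transparent (Proposition \ref{prop:all_actions}), but a non-trivial intertwiner would have to match the $N^1$-characters on both sides; the computation requires identifying, in coordinates, how conjugation by an affine-Weyl representative permutes the $N^1$-eigenspaces, and then ruling out all non-trivial cosets. Once this is in place, the remainder of the corollary is an essentially formal consequence of Corollaries \ref{cor:central_char_kernel_dim} and \ref{cor:Xi_theta_as_N1-rep} together with Proposition \ref{prop:unipotent_traces}.
\end{pro*}
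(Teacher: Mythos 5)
Your overall reduction is the right one, and it matches the paper's in spirit: the paper's proof of this corollary is a one-line citation, deducing everything from Corollary \ref{cor:Xi_theta_as_N1-rep} together with \cite{Ivanov_15_ram} Proposition 4.24, which is precisely a packaged criterion of the kind you are trying to re-prove (a cuspidal inducing datum on $\iota(E^{\times})I_F$ whose restriction to $N^1$ has the stated spectrum and which is irreducible over $B(F)\cap I_F$ yields an irreducible, admissible, cuspidal, minimal representation of level $\frac{m+d}{2}$ containing a ramified simple stratum). So the input you use is the correct one; the gaps are in the machinery you propose to rebuild by hand. First, the step you yourself flag as the main obstacle --- collapsing the intertwining of $\Xi_{\widetilde{\theta}}$ to $\iota(E^{\times})I_F$ via the affine Bruhat decomposition --- is only a plan, and it is exactly the content that the cited proposition (equivalently, the Bushnell--Henniart theory of cuspidal types once a ramified simple stratum is exhibited) supplies; without carrying it out, irreducibility is not established. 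Second, your cuspidality argument is the wrong mechanism as stated: cuspidality is vanishing of the Jacquet module, i.e.\ of $N$-\emph{coinvariants}, and these do not vanish merely because $\Xi_{\widetilde{\theta}}$ has no $N^1$-fixed vectors in a Mackey decomposition; the clean argument is that an irreducible representation compactly induced from an open, compact-mod-centre subgroup has compactly supported matrix coefficients modulo the centre and is therefore supercuspidal.

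Third, the level and minimality claims cannot be read off the $N^1$-spectrum of $\Xi_{\widetilde{\theta}}$ alone. The normalized level $\ell(R_{\widetilde{\theta}})$ is a minimum over \emph{all} chain orders of congruence levels at which the induced representation $R_{\widetilde{\theta}}$ (not $\Xi_{\widetilde{\theta}}$) has fixed vectors; Corollary \ref{cor:central_char_kernel_dim} and Proposition \ref{prop:unipotent_traces} give the exact level relative to the Iwahori order (note also the small slip: the conclusion "non-trivial on $I_F^{m+d}$" follows from the trace being different from $\dim \Xi_{\widetilde{\theta}}=(q-1)q^{n+d-1}$, not from it being nonzero), but to exclude a smaller, integral level at a maximal order, and to see that twisting by $\phi\circ\det$ cannot lower the level, one needs the ramified simple stratum: its element has odd valuation relative to the chain order, while scalars from $F^{\times}$ have even valuation, so no twist can cancel the leading term. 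Your exhibition of that stratum is itself only gestured at ("nondegeneracy of these characters together with $\beta$"), so the chain level-exactness $\Rightarrow$ minimality is not yet closed. In short: same essential input as the paper, but the standard type-theoretic consequences that the paper imports from \cite{Ivanov_15_ram} Proposition 4.24 are here left as sketches, and two of the sketches (cuspidality via Jacquet-module Mackey, level/minimality from the $N^1$-spectrum of the inducing representation) would need to be replaced by the standard arguments just described.
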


\begin{proof} 
This follows from Corollary \ref{cor:Xi_theta_as_N1-rep} and \cite{Ivanov_15_ram} Proposition 4.24. 
\end{proof}


\subsection{Wild cuspidal types} \label{sec:comparison_BKH} 

Here we briefly recall the method of \cite{BushnellH_06} to produce smooth irreducible cuspidal $\overline{\mathbb{Q}}_{\ell}$-representations of $G(F)$ from certain characters of open subgroups of $G(F)$, which are compact modulo center. For definitions and general results on cuspidal types (for $G = \GL_2$) we refer to \cite{BushnellH_06}. We concentrate on the special case when the residue characteristic of $F$ is $2$. Let $\fI_F$ be the $\caO_F$-subalgebra of $\fM := {\rm Mat}_{2 \times 2}(F)$ with filtration by $\fI^r_F$ given by
\[
 \fI_F^r := \iota(\pi)^r \fI_F = \matzz{\fp_F^{\lfloor \frac{r+1}{2} \rfloor}}{\fp_F^{\lfloor \frac{r}{2} \rfloor}}{\fp_F^{\lfloor \frac{r}{2} \rfloor + 1}}{\fp_F^{\lfloor \frac{r+1}{2} \rfloor}} \subseteq \fI_F := \matzz{\caO_F}{\caO_F}{\fp_F}{\caO_F} 
\]
Then $\fI_F^{\times} = I_F$ and $I_F^r = 1 + \fI_F^r$ for $r \geq 1$.  

Fix once for all a $\overline{\bQ}_{\ell}^{\times}$-valued character $\psi$ of $F$ of level $1$ (i.e., trivial on $\fp_F$, non-trivial on $\caO_F$). Let $\psi_{\fM} := \psi \circ \tr_{\fM}$, where $\tr_{\fM}$ is the trace. Analogously, put $\psi_E := \psi \circ \tr_{E/F}$. Note that for integers $k \leq r$, $\iota \colon E \har \fM$ induces an inclusion $\fp_E^k/\fp_E^r \har \fI_F^k/\fI_F^r$. 

\begin{lm}\label{lm:dual_spaces_of_characters}
\begin{itemize}
\item[(i)] (\cite{BushnellH_06} 12.5 Proposition) Let $0 \leq k < r \leq 2k+1$ be integers. There is an isomorphism
\[
\fJ_F^{-r}/\fJ_F^{-k} \stackrel{\sim}{\longrar} (I_F^{k+1}/I_F^{r+1})^{\vee}, \qquad a + \fJ_F^{-k} \mapsto \psi_{\fM,a}|_{U_{\fJ}^{k+1}}
\]
\noindent where $\psi_{\fM,a}$ denotes the function $x \mapsto \psi_{\fM}(a(x-1))$. 
\item[(ii)] Let $0 \leq k < r \leq 2k+1$ be integers. There is an isomorphism
\[
\fp_E^{-(r+d)}/\fp_E^{-(k+d)} \stackrel{\sim}{\longrar} (U_E^{k+1}/U_E^{r+1})^{\vee} , \qquad a + \varpi^{-k}\fJ \mapsto \psi_{E,a}|_{U_{\fJ}^{k+1}}
\]
where $\psi_{E,a}$ denotes the function $x \mapsto \psi_E(a(x-1))$.
\item[(iii)] Let $k,r$ be positive integers satisfying $k+d < r \leq 2k + d + 1$. Then there is a commutative diagram 

\centerline{
\begin{xy}\label{diag:character_isos_diag}
\xymatrix{
\fp_E^{-(r+d)}/\fp_E^{-(k+d)} \ar@{->>}[r]  \ar@{^{(}->}[d] & \fp_E^{-(r+d)}/\fp_E^{-(k+2d)}  \ar[r]^{\, \sim} & (U_E^{k+d+1}/U_E^{r+1})^{\vee} \ar@{^{(}->}[d] \\
\fJ_F^{-(r+d)}/\fJ_F^{-(k+d)} \ar[r]^{\sim\,\,\,} & (I_F^{k+d+1}/I_F^{r+d+1})^{\vee} \ar@{->>}[r] & (U_E^{k+d+1}/U_E^{r+d+1})^{\vee}
}
\end{xy}
}
\noindent where the two horizontal isomorphisms are from parts (i) and (ii) of the lemma and all other maps are either induced by $\iota$ or by the natural projections.
\end{itemize}
\end{lm}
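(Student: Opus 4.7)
The plan is to treat the three parts in sequence, with part (i) being cited verbatim, part (ii) reduced to duality for finite abelian $p$-groups via an exponential-type isomorphism, and part (iii) reduced to the identity $\tr_{\fM}\circ\iota=\tr_{E/F}$.

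For part (ii), I would first upgrade the assignment $1+x\mapsto x$ to a group isomorphism $U_E^{k+1}/U_E^{r+1}\stackrel{\sim}{\to}\fp_E^{k+1}/\fp_E^{r+1}$: from $(1+x)(1+y)=1+(x+y)+xy$ together with $xy\in\fp_E^{2(k+1)}$ and the hypothesis $r\le 2k+1$, we get $xy\in\fp_E^{r+1}$, so the map is a homomorphism, and it is manifestly bijective. Hence characters of $U_E^{k+1}/U_E^{r+1}$ correspond bijectively to characters of the additive group $\fp_E^{k+1}/\fp_E^{r+1}$, and under this correspondence $\chi\mapsto(x\mapsto\chi(1+x))$ the character $\psi_{E,a}|_{U_E^{k+1}}$ becomes $x\mapsto\psi_E(ax)$. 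It therefore suffices to show that the pairing $(a,x)\mapsto\psi_E(ax)$ induces an isomorphism $\fp_E^{-(r+d)}/\fp_E^{-(k+d)}\stackrel{\sim}{\to}(\fp_E^{k+1}/\fp_E^{r+1})^{\vee}$. The conductor of $\psi_E$ is computed from Lemma~\ref{lm:trace_norm}(i): $\psi_E$ is trivial on $\fp_E^{1-d}$ and nontrivial on $\fp_E^{-d}$. Consequently $\psi_{E,a}$ is trivial on $\fp_E^{k+1}$ iff $a\in\fp_E^{-(k+d)}$, and it extends a character of $\fp_E^{k+1}/\fp_E^{r+1}$ iff $a\in\fp_E^{-(r+d)}$. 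Injectivity follows, and surjectivity follows by counting: both finite abelian groups have $q^{r-k}$ elements.

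For part (iii), I would verify commutativity of the two squares separately. The left square commutes tautologically: the left vertical is induced by $\iota\colon\fp_E^{-(r+d)}\hookrightarrow\fJ_F^{-(r+d)}$ (injectivity on the quotients follows from $\iota(\fp_E^j)=\iota(\unife)^j\iota(\caO_E)\subseteq\iota(\unife)^j\fJ_F=\fJ_F^j$, together with $\iota(\fp_E^{-(r+d)})\cap\fJ_F^{-(k+d)}=\iota(\fp_E^{-(k+d)})$), and the top horizontal is just the quotient map $\fp_E^{-(r+d)}/\fp_E^{-(k+d)}\twoheadrightarrow\fp_E^{-(r+d)}/\fp_E^{-(k+2d)}$ (note $r+d\le 2(k+d)+1$, so part (ii) indeed applies with $k$ replaced by $k+d$). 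The right square is the heart of the matter: traversing it both ways starting from $a$ one obtains the restrictions $\psi_{E,a}|_{U_E^{k+d+1}}$ and $\psi_{\fM,\iota(a)}|_{\iota(U_E^{k+d+1})}$ respectively, and these coincide precisely because
\[
\tr_{\fM}(\iota(a)\iota(x))=\tr_{\fM}(\iota(ax))=\tr_{E/F}(ax)
\]
for $a,x\in E$. The identity $\tr_{\fM}\circ\iota=\tr_{E/F}$ is a direct matrix computation from the explicit form of $\iota(\unife)$ given in Section~\ref{sec:group_th_data}, using $\tr_{E/F}(\unife)=\Delta$ and $\mathrm{char}\,F=2$.

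The only mildly subtle step is the surjectivity of the bottom horizontal composition $\fJ_F^{-(r+d)}/\fJ_F^{-(k+d)}\twoheadrightarrow(U_E^{k+d+1}/U_E^{r+d+1})^{\vee}$, i.e.\ that every character of $U_E^{k+d+1}/U_E^{r+d+1}$ extends to one of $I_F^{k+d+1}/I_F^{r+d+1}$; this is automatic as both groups are finite abelian and restriction of characters is surjective. The main (very modest) obstacle is keeping track of the various integer shifts so that parts (i) and (ii) are being applied with admissible parameters; the condition $k+d<r\le 2k+d+1$ is exactly what is needed to make both applications legal.
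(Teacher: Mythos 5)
Your proposal is correct and follows essentially the same route as the paper: part (i) is quoted from Bushnell--Henniart, part (ii) comes down to the conductor of $\psi_E=\psi\circ\tr_{E/F}$ being shifted by $d$ (via Lemma \ref{lm:trace_norm}(i)), and part (iii) rests on the identity $\tr_{E/F}=\tr_{\fM}\circ\iota$, which is exactly the paper's (much terser) argument. You merely spell out the details the paper declares ``immediate'', and your parameter checks (e.g.\ $k+d<r\leq 2k+d+1$ making both (i) and (ii) applicable) are accurate.
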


\begin{proof}
Part (ii) is immediate (the shift by $d$ coming from the discriminant of $E/F$). Part (iii) is immediate from (i) and (ii) and $\tr_{E/F} = \tr_{\fM} \circ \iota$.
\end{proof}

We abuse the notation $\psi_{E,\alpha}$, by using it for both, the (additive) character of a subquotient of $\caO_E$ and the (multiplicative) character of a subquotient of $U_E$. It will be always clear from the context, which character is meant.  

The following construction uses Lemma \ref{lm:dual_spaces_of_characters}(iii) with $r = m$ and $k = n-1$. To give a \textit{cuspidal type} $(\fJ_F, \iota(E^{\times})I_F^{n+d}, \Lambda)$, such that $\Lambda$ is trivial on $I_F^{m+d+1} = I_F^{2(n+d)}$, and such that the restriction of $\Lambda$ to $\iota(E^{\times})$ is the character $\theta \circ \iota^{-1}$, is the same as to give an element $\alpha \in \fp_E^{-(m+d)}/\fp_E^{-(n+d-1)}$, such that the restriction of $\psi_{\fM,\iota(\alpha)}$ to $U_E^{n+d}/U_E^{m+d+1}$, which factors through $U_E^{n+d}/U_E^{m+1}$ (by Lemma \ref{lm:dual_spaces_of_characters}(iii)) is equal to the restriction of $\theta$ to this subgroup, that is
\begin{equation}\label{eq:compat_alpha_psi_theta}
\psi_{E,\alpha}(x) = \theta(1+x) \quad \text{for all $x \in \fp_E^{n+d}/\fp_E^{m+1}$}.
\end{equation}

The following lemma is immediate.

\begin{lm}\label{lm:char_of_Pi_from_theta_psi_alpha}
Let $\theta$, $\psi$, $\alpha$ be as above, satisfying \eqref{eq:compat_alpha_psi_theta}. There is a unique character of the group $\Pi$ from \eqref{eq:def_Pi}, whose restriction to $E^{\times}/U_E^{m+1}$ (resp. $\fp_E^n/\fp_E^{m+1}$) is $\theta$ (resp. $\psi_{E,\alpha}$).
\end{lm}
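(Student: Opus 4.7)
The plan is to apply the universal property of the pushout directly. By the construction \eqref{eq:def_Pi}, $\Pi$ is the pushout of abelian groups
\[
\xymatrix{
\fp_E^{n+d}/\fp_E^{m+1} \ar@{^{(}->}[r] \ar@{^{(}->}[d]_{x \mapsto 1+x} & \fp_E^n/\fp_E^{m+1} \ar[d] \\
E^{\times}/U_E^{m+1} \ar[r] & \Pi,
}
\]
so by the dual statement for $\Hom(-,\overline{\bQ}_{\ell}^{\times})$, the group $\Pi^{\vee}$ is identified with the subgroup of $(E^{\times}/U_E^{m+1})^{\vee} \times (\fp_E^n/\fp_E^{m+1})^{\vee}$ consisting of pairs $(\chi_1, \chi_2)$ whose pullbacks to $\fp_E^{n+d}/\fp_E^{m+1}$ coincide.

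The first step is to verify that the pair $(\theta, \psi_{E,\alpha})$ lies in this subgroup. By the two maps in the pushout diagram, this amounts to checking the identity
\[
\theta(1+x) = \psi_{E,\alpha}(x) \qquad \text{for all } x \in \fp_E^{n+d}/\fp_E^{m+1},
\]
which is precisely the compatibility \eqref{eq:compat_alpha_psi_theta} imposed on $\alpha$. The second step is then to invoke the universal property to produce the unique character of $\Pi$ restricting to $\theta$ on $E^{\times}/U_E^{m+1}$ and to $\psi_{E,\alpha}$ on $\fp_E^n/\fp_E^{m+1}$.

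There is no serious obstacle here: existence and uniqueness are formal consequences of the pushout property, and the only thing to check, namely the compatibility on $\fp_E^{n+d}/\fp_E^{m+1}$, has been built into the hypothesis on $\alpha$ via \eqref{eq:compat_alpha_psi_theta}. In particular, uniqueness follows because $\Pi$ is generated, as an abelian group, by the images of $E^{\times}/U_E^{m+1}$ and $\fp_E^n/\fp_E^{m+1}$, so any character of $\Pi$ is determined by its restrictions to these two subgroups.
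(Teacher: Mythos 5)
Your proof is correct: the paper itself gives no argument (it simply declares the lemma immediate), and your appeal to the universal property of the pushout, with the compatibility on $\fp_E^{n+d}/\fp_E^{m+1}$ supplied exactly by \eqref{eq:compat_alpha_psi_theta}, is precisely the intended formalization. Nothing is missing.
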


\begin{Def}
We set 
\begin{itemize}
\item[$\bullet$] $\Lambda_{\theta, \psi, \alpha} := $ the character of $\iota(E^{\times})I_F^{n+d}$ corresponding to $\psi$, $\alpha$ and $\theta$ as above,
\item[$\bullet$] ${\rm BH}_{\theta,\psi,\alpha} := \cInd_{\iota(E^{\times})I_F^{n+d}}^{G(F)} \Lambda_{\theta, \psi, \alpha}$.
\item[$\bullet$] $(\theta, \psi_{E,\alpha}) := $ the character of $\Pi$ attached to $\theta, \psi, \alpha$ by Lemma \ref{lm:char_of_Pi_from_theta_psi_alpha}. 
\end{itemize}
\end{Def}

\begin{thm}[\cite{BushnellH_06} \S15.5 Corollary]\label{thm:every_rep_cont_cusp_type}
The map 
\[ (\fA,J,\Lambda) \mapsto \cInd\nolimits_J^{G(F)} \Lambda \]
induces a bijection between the set of conjugacy classes of all (i.e., not necessarily those considered above) cuspidal types in $G(F)$ and equivalence classes of irreducible supercuspidal representations of $G(F)$.
\end{thm}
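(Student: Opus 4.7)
My plan is to split the theorem into three independent assertions: (i) for any cuspidal type $(\fA,J,\Lambda)$, the compactly induced representation $\cInd_J^{G(F)} \Lambda$ is irreducible and supercuspidal; (ii) every smooth irreducible supercuspidal representation of $G(F)$ arises this way (exhaustion); (iii) two cuspidal types produce isomorphic representations if and only if they are $G(F)$-conjugate.

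For (i), I would use Frobenius reciprocity combined with Mackey's formula to write
\[
\End_{G(F)}\bigl(\cInd\nolimits_J^{G(F)} \Lambda\bigr) \cong \bigoplus_{g \in J \backslash G(F)/J} \Hom_{g^{-1}Jg \cap J}\bigl(g^{-1}\Lambda g,\, \Lambda\bigr),
\]
so irreducibility reduces to showing that the intertwining set $I_{G(F)}(\Lambda) = \{g \in G(F) : \Hom_{g^{-1}Jg\cap J}(g^{-1}\Lambda g, \Lambda) \neq 0\}$ coincides with $J$. This is the central intertwining computation of Bushnell--Kutzko: for the simple stratum underlying the type, the intertwining in $G(F)$ is controlled by the centraliser of the associated field extension, which for $\GL_2$ is the torus $\iota(E^{\times})$; and $\iota(E^{\times})$ is already contained in $J$ by construction. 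Supercuspidality is then automatic, since $J$ is open and compact modulo the centre, so the matrix coefficients of $\cInd_J^{G(F)} \Lambda$ are compactly supported modulo $Z(F)$, which by Casselman's criterion characterises supercuspidality.

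For (ii), the task is to produce, inside any given irreducible supercuspidal $\pi$, a cuspidal type. Following Bushnell--Kutzko, I would argue by induction on the normalised level $\ell(\pi)$: restricting $\pi$ to a suitable congruence filtration $1 + \fA^r$, one decomposes it by characters $\psi_{\fM,\alpha}$ and classifies the resulting strata as \emph{split}, \emph{scalar (null)} or \emph{simple}. A split stratum would produce a non-trivial Jacquet module (via a Hecke-algebra intertwining argument), contradicting supercuspidality, while a scalar stratum would let one twist $\pi$ by a character of $F^{\times}$ to strictly lower its level, reducing to the inductive hypothesis. Hence a fundamental simple stratum must appear. One then extends its associated character first through the Heisenberg quotient $J^1/H^1$ via a Stone--von Neumann style unique irreducible extension of prescribed central character, and finally from $J^1$ to the full group $J$ by a choice of extension. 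Any such extension contained in $\pi$ is a cuspidal type.

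The hardest step is clearly (ii), because of the case analysis on strata and the level-lowering arguments needed to arrange that a simple stratum actually appears; this is where the bulk of \cite{BushnellH_06} is spent. Step (iii) is comparatively routine: the ``only if'' direction is read off from the same intertwining computation used in (i), since an isomorphism $\cInd\Lambda_1 \cong \cInd\Lambda_2$ forces $\Lambda_1$ and $\Lambda_2$ to intertwine in $G(F)$, and the rigidity of simple strata then upgrades intertwining to $G(F)$-conjugacy; the ``if'' direction is a tautology since compact induction is covariant in the inducing datum.
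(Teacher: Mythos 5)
This statement is not proved in the paper at all: it is imported verbatim from Bushnell--Henniart (\cite{BushnellH_06} \S 15.5 Corollary), whose proof occupies their chapters on strata, intertwining and the exhaustion theorem, so the only comparison to make is with that reference. Your outline reproduces its structure faithfully: irreducibility and supercuspidality of $\cInd_J^{G(F)}\Lambda$ via the intertwining computation and compact support of coefficients, exhaustion via the trichotomy of split/scalar/fundamental strata with level-lowering twists and a Stone--von Neumann style extension through $J^1/H^1$, and injectivity via ``intertwining implies conjugacy''. Two small points to tighten if you were to write this out: first, for smooth representations, $\End_{G(F)}(\cInd_J^{G(F)}\Lambda)$ being scalars does not by itself give irreducibility --- you need the standard criterion of \cite{BushnellH_06} \S 11.4, which combines triviality of the intertwining with the comparison of compact and full induction (equivalently, admissibility); second, the theorem covers \emph{all} cuspidal types, including the level-zero ones attached to a maximal order, for which there is no underlying simple stratum and the intertwining bound comes instead from cuspidality of the inflated representation of $\GL_2(k)$ rather than from the centralizer of a field extension.
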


\begin{cor}[cf. \cite{BushnellH_06} \S15.3 Theorem]
The representation ${\rm BH}_{\theta,\psi,\alpha}$ is irreducible and supercuspidal. 
\end{cor}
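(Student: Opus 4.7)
The plan is to recognize $(\fJ_F, \iota(E^\times)I_F^{n+d}, \Lambda_{\theta,\psi,\alpha})$ as a cuspidal type in the sense of \cite{BushnellH_06} \S15 and then invoke Theorem \ref{thm:every_rep_cont_cusp_type} (equivalently, the cited \S15.3 Theorem of \cite{BushnellH_06}), which asserts that the compact induction of any cuspidal type is automatically irreducible and supercuspidal.

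The first step is to verify that $\Lambda_{\theta,\psi,\alpha}$ is a well-defined character of $\iota(E^\times)I_F^{n+d}$. Its restriction to $I_F^{n+d}$ is prescribed to be $\psi_{\fM,\iota(\alpha)}$, which indeed factors through $I_F^{n+d}/I_F^{m+d+1}$ by Lemma \ref{lm:dual_spaces_of_characters}(iii); its restriction to $\iota(E^\times)$ is $\theta \circ \iota^{-1}$; and compatibility on the intersection $\iota(U_E^{n+d})$ is exactly the relation \eqref{eq:compat_alpha_psi_theta} imposed in the definition of $\alpha$. Since $\iota(E^\times) I_F^{n+d}$ is generated by $\iota(E^\times)$ and $I_F^{n+d}$ with the two subgroups normalising each other (the center normalises $I_F^{n+d}$, and $\iota(\unife)$ preserves the lattice chain defining $\fJ_F^\bullet$), these constraints determine $\Lambda_{\theta,\psi,\alpha}$ uniquely.

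Next I would identify the underlying simple stratum. Because $\theta$ is minimal over $F$ of level $m = 2n+d-1$, the compatibility \eqref{eq:compat_alpha_psi_theta} forces any representative $\alpha$ to have $E$-valuation exactly $-(m+d)$, so $F[\iota(\alpha)] = \iota(E)$ inside $\fM$. Thus $(\fJ_F, m+d, n+d-1, \iota(\alpha))$ is a ramified simple stratum in the sense of \cite{BushnellH_06} \S13, and the group $\iota(E^\times)I_F^{n+d}$ is precisely the ``$J$'' attached to this stratum in \cite{BushnellH_06} \S15 and \S41. The key simplification of our totally wildly ramified minimal setup with $2n > d$ is that the Heisenberg/$\beta$-extension layer collapses (the quotient $J/J^1$ is trivial here, and no non-abelian lift is required), so that the simple character $\Lambda_{\theta,\psi,\alpha}$ itself plays the role of the cuspidal type.

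The main---but essentially routine---obstacle is this dictionary step: reconciling our hands-on description of $\Lambda_{\theta,\psi,\alpha}$ in terms of the triple $(\theta,\psi,\alpha)$ with the abstract apparatus of cuspidal types (strata, intertwining, the nested groups $H^1 \subseteq J^1 \subseteq J$) used in \cite{BushnellH_06}. Once this identification is in place, Theorem \ref{thm:every_rep_cont_cusp_type} delivers both irreducibility and supercuspidality of ${\rm BH}_{\theta,\psi,\alpha}$ in one stroke, and the proof is complete.
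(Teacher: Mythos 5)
Your proposal is correct and coincides with the paper's (implicit) argument: the triple $(\fJ_F,\iota(E^{\times})I_F^{n+d},\Lambda_{\theta,\psi,\alpha})$ constructed in the preceding subsection is a cuspidal type, and Theorem \ref{thm:every_rep_cont_cusp_type} (the \S15.5 Corollary of \cite{BushnellH_06}) then yields irreducibility and supercuspidality of its compact induction in one stroke. The only (harmless) slip is the parenthetical claim that $J/J^{1}$ is trivial; what collapses in the ramified $\GL_2$ setting is the Heisenberg layer $J^{1}/H^{1}$, which is why the cuspidal type is just a character, and this does not affect the argument.
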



\subsection{Relation between geometric and type-theoretical constructions}
By Lemma \ref{lm:dual_spaces_of_characters}(iii) there are precisely $q^d$ elements $\alpha \in \fp_E^{-(m+d)}/\fp_E^{-(n+d-1)}$ satisfying \eqref{eq:compat_alpha_psi_theta}, each giving rise to the cuspidal inducing datum $(\fJ_F, \Ind\nolimits_{\iota(E^{\times})I_F^{n+d}}^{\iota(E^{\times})I_F} \Lambda_{\theta, \psi, \alpha})$, which is, in a sense, attached to $\theta$. On the geometric side there are precisely $q^d$ lifts $\tilde{\theta}$ of $\theta$ to a character of $\widetilde{\Gamma}/\Gamma^{\prime}$, each giving rise to the cuspidal inducing datum $(\fJ_F, \Xi_{\tilde{\theta}})$. Our main result is the following theorem, which states that the relation between the two families of corresponding $G(F)$-representations, $R_{\widetilde{\theta}}$ and ${\rm BH}_{\theta, \psi, \alpha}$, is naturally encoded in the dual $\beta^{\vee}$ of the isomorphism $\beta \colon \Pi \stackrel{\sim}{\rar} \widetilde{\Gamma}/\Gamma^{\prime}$ from Lemma \ref{prop:beta}. 

\begin{thm}\label{thm:relation_ADLV_BH}
Let $\tilde{\theta}$ be a character of $\widetilde{\Gamma}/\Gamma^{\prime}$ with restriction $\theta$ to $E^{\times}/U_E^{m+1}$ of level $m$. Let $\psi,\alpha$ be such that $\beta^{\vee}(\tilde{\theta}) = (\theta, \psi_{E,\alpha})$. Then
\[
R_{\widetilde{\theta}} \cong {\rm BH}_{\theta, \psi, \alpha}.
\]
\end{thm}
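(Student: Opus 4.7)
The plan is to identify both sides as compactly induced from the same subgroup $\iota(E^{\times})I_F$, and then to match the underlying $\iota(E^{\times})I_F$-representations by comparing traces, with the exotic isomorphism $\beta$ doing the bookkeeping. By Corollary~\ref{cor:smallest_tilde_gamma_stable_subscheme} together with transitivity of induction,
\[
R_{\widetilde{\theta}} \cong \cInd\nolimits_{\iota(E^{\times})I_F}^{G(F)} \Xi_{\widetilde{\theta}}, \qquad {\rm BH}_{\theta,\psi,\alpha} \cong \cInd\nolimits_{\iota(E^{\times})I_F}^{G(F)} \Ind\nolimits_{\iota(E^{\times})I_F^{n+d}}^{\iota(E^{\times})I_F} \Lambda_{\theta,\psi,\alpha}.
\]
Both $\Xi_{\widetilde{\theta}}$ and $\Ind_{\iota(E^{\times})I_F^{n+d}}^{\iota(E^{\times})I_F}\Lambda_{\theta,\psi,\alpha}$ have dimension $(q-1)q^{n+d-1}$ by Corollary~\ref{cor:central_char_kernel_dim} and the obvious index count; moreover $\Xi_{\widetilde{\theta}}$ is irreducible by Corollary~\ref{cor:Xi_theta_as_N1-rep}. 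By Theorem~\ref{thm:every_rep_cont_cusp_type} and Corollary~\ref{cor:irred_level_minimality} it then suffices, via Frobenius reciprocity, to show that $\Lambda_{\theta,\psi,\alpha}$ occurs in the restriction $\Xi_{\widetilde{\theta}}|_{\iota(E^{\times})I_F^{n+d}}$, which by dimension count is equivalent to
\[
\tr\bigl(g;\Xi_{\widetilde{\theta}}\bigr) \;=\; \tr\Bigl(g;\Ind\nolimits_{\iota(E^{\times})I_F^{n+d}}^{\iota(E^{\times})I_F}\Lambda_{\theta,\psi,\alpha}\Bigr) \qquad \text{for all } g\in\iota(E^{\times})I_F.
\]

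The geometric side is provided by Proposition~\ref{prop:traces_in_general}, which expresses $\tr(g;\Xi_{\widetilde{\theta}})$ as a weighted sum over $a\in A_g$ of values $\widetilde{\theta}(i(t_a,\bar{r}_a))$, with an explicit central prefactor. Writing $(x_a,y_a)\in\Pi$ for the preimage of $i(t_a,\bar{r}_a)$ under $\beta$ (Proposition~\ref{prop:beta}), the hypothesis $\beta^{\vee}(\widetilde{\theta})=(\theta,\psi_{E,\alpha})$ converts each summand into
\[
\widetilde{\theta}\bigl(i(t_a,\bar{r}_a)\bigr) = \theta(x_a)\,\psi_{E,\alpha}(y_a).
\]
Unwinding the formulas for $t_a,\bar{r}_a$ in Proposition~\ref{prop:traces_in_general} together with the definition of $\beta$, one checks that $x_a$ carries the ``multiplicative'' part of $g$ (the $\beta_{g,a}^{\prime}$-contribution, together with the diagonal $\theta$-prefactor) while $y_a\in\fp_E^n/\fp_E^{m+1}$ carries the ``infinitesimal'' part $R^{-1}h(g,a)$ onto which $\psi_{E,\alpha}$ pairs. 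On the type-theoretic side, the Frobenius character formula reduces $\tr(g;\Ind\Lambda_{\theta,\psi,\alpha})$ to the sum of $\theta(\text{diagonal part})\cdot\psi_{\fM,\iota(\alpha)}(\text{``nilpotent'' part})$ over those cosets $x\in\iota(E^{\times})I_F/\iota(E^{\times})I_F^{n+d}$ for which $x^{-1}gx\in\iota(E^{\times})I_F^{n+d}$; this outer sum can be re-indexed by $A_g$ using the surjection $a\mapsto R^{-1}h(g,a)$ of Lemma~\ref{lm:technical_lm_for_unipotent_traces}, which played exactly the same role in the unipotent computation of Proposition~\ref{prop:unipotent_traces}.

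The main obstacle — and the heart of the argument — is the summand-by-summand matching of the two character sums. The $H^{-1}$-correction in Proposition~\ref{prop:traces_in_general} is what produces the non-obvious factor $(1+y)^{-1}$ in the formula for $\beta$, and this is precisely why the exotic form of $\beta$ (contrasted in Remark~\ref{rem:about_the_exotic_iso} with the more natural-looking candidate using $U_E^{n+d}$) is the right isomorphism to use rather than an ``algebraically natural'' one. It is enough to verify the match on a set of test elements generating $\iota(E^{\times})I_F^{n+d}$ up to $\iota(E^{\times})I_F$-conjugation: the unipotents $e_-(u)$ (already handled in Proposition~\ref{prop:unipotent_traces}), the central $\iota(u)$ with $u\in F^{\times}$ (Corollary~\ref{cor:central_char_kernel_dim}), and elements $\iota(u)$ with $u\in E^{\times}\setminus F^{\times}$. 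For this last class the sum over $A_g$ becomes a genuine Gauss-like sum in $\psi_{E,\alpha}$, and showing it equals the analogous Frobenius sum is where the precise translation $(t_a,\bar{r}_a)\leftrightarrow(x_a,y_a)$ through $\beta$ is used in full force.
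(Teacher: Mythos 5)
Your overall skeleton (reduce to an isomorphism of cuspidal inducing data $\Xi_{\widetilde{\theta}} \cong \cInd_{\iota(E^{\times})I_F^{n+d}}^{\iota(E^{\times})I_F}\Lambda_{\theta,\psi,\alpha}$ and compare traces, with $\beta$ translating between the two parametrizations) is the right one, but the argument has a genuine gap at the reduction step. A character identity cannot be verified on ``a set of test elements generating $\iota(E^{\times})I_F^{n+d}$ up to conjugation'': traces are class functions, not multiplicative, so agreement on generators says nothing about agreement elsewhere, and likewise the occurrence of $\Lambda_{\theta,\psi,\alpha}$ in $\Xi_{\widetilde{\theta}}|_{\iota(E^{\times})I_F^{n+d}}$ is an inner-product computation over the whole group (mod center), not something one can certify on a generating set. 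The paper's proof replaces this by a genuine theorem: Bushnell--Henniart's 27.8 Proposition, strengthened by the observation that for two cuspidal inducing data on the same subgroup $J=\iota(E^{\times})I_F$ with the same central character, the same level $\frac{m+d}{2}$ and the twist-minimality property (Corollaries \ref{cor:central_char_kernel_dim}, \ref{cor:irred_level_minimality} and Lemma \ref{lm:level_and_minimality_of_BH_side}), it suffices to match traces on the $F$-minimal elements $g\in J$ with $\ord_F(\det g)$ odd. Without invoking such a criterion (or proving the trace identity on essentially all conjugacy classes), your reduction does not go through.

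Relatedly, the elements you propose to test are not the relevant ones. The unipotents $e_-(u)$ and the central elements enter only in establishing irreducibility, cuspidality and the level of $R_{\widetilde{\theta}}$ (Proposition \ref{prop:unipotent_traces}, Corollaries \ref{cor:Xi_theta_as_N1-rep}, \ref{cor:irred_level_minimality}), i.e.\ the hypotheses fed into the comparison criterion; the trace comparison itself (Proposition \ref{prop:hard_traces}) must be carried out for $g\in\iota(E^{\times})I_F^{n+d}$ with $\ord_F(\det g)$ odd, which after central twists are of the form $g=u\,\iota(1+\unife x)\iota(\unife)$ with $u\in I_F^{n+d}$ --- not elements of $\iota(E^{\times})$ alone. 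For these the re-indexing you cite (Lemma \ref{lm:technical_lm_for_unipotent_traces}) does not apply: it is specific to unipotent $g$. What is actually needed, and what constitutes the heart of Section \ref{sec:hard_traces}, is the determination of which Mackey cosets $r_{y,\lambda}$ contribute (Lemma \ref{sublm:conj_to_iotaE}), the explicit formulas for $t_a,\bar r_a$ via Lemmas \ref{lm:j_h_R} and \ref{lm:j_h_R_n_smaller_d} (with the case split $n\geq d$ versus $n<d$), and the construction of the bijection $\gamma$ of \eqref{eq:gamma_bijection} satisfying the identity \eqref{eq:beta_main_equality} term by term. Your proposal gestures at this matching (``Gauss-like sum'', the role of the $(1+y)^{-1}$ in $\beta$) but does not supply it, and since this is precisely where the theorem is proved, the proposal as it stands is incomplete.
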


\begin{proof}
We have $R_{\widetilde{\theta}} \cong \cInd_{\iota(E^{\times})I_F}^{G(F)} \Xi_{\widetilde{\theta}}$, so it suffices to show that 
\begin{equation} \label{eq:need_to_show_iso_cusp_types} 
\Xi_{\widetilde{\theta}} \cong \cInd\nolimits_{\iota(E^{\times})I_F^{n+d}}^{\iota(E^{\times})I_F} \Lambda_{\theta, \psi, \alpha}.
\end{equation}
From Corollaries \ref{cor:central_char_kernel_dim}, \ref{cor:irred_level_minimality} and Lemma \ref{lm:level_and_minimality_of_BH_side} it follows that both sides are cuspidal inducing data sharing and
\begin{itemize}
 \item same underlying subgroup $\iota(E^{\times}) I_F$,
 \item same central character $\theta|_{F^{\times}}$,
 \item same level $\frac{m+d}{2}$, 
 \item the property that their levels are minimal among the levels of all possible twists by characters $F^{\times}$. 
\end{itemize}

We observe that when \cite{BushnellH_06} 27.8 Proposition is applied to two cuspidal inducing data $\Xi_1, \Xi_2$ sharing the same underlying subgroup $J = \iota(E^{\times})I_F$, then instead of assumption (c) there, it suffices to assume that $\tr(g; \Xi_1) = \tr(g; \Xi_2)$ holds only for all $F$-minimal elements $g \in J$ with valuation of determinant equal to $-2\ell(\Xi_1)$. Indeed, the proof goes through verbatim. Now \eqref{eq:need_to_show_iso_cusp_types} follows from  \cite{BushnellH_06} 27.8 Proposition and Proposition \ref{prop:hard_traces}.
\end{proof}

In the above proof of the theorem we needed the following lemma.
\begin{lm} \label{lm:level_and_minimality_of_BH_side}
With notations as in the theorem, the central character of ${\rm BH}_{\theta, \psi, \alpha}$ is $\theta|_{F^{\times}}$, its level is $\frac{m+d}{2}$. For any character $\phi$ of $F^{\times}$, one has $0 < \ell({\rm BH}_{\theta, \psi, \alpha}) \leq \ell(\phi {\rm BH}_{\theta, \psi, \alpha})$.
\end{lm}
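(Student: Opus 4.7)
\begin{pro*}[Proof plan]
First, I would verify the central character claim. Since $\iota \colon E \hookrightarrow \mathrm{Mat}_{2\times 2}(F)$ is an embedding of $F$-algebras, the scalar matrices $F^{\times}\cdot 1 \subseteq G(F)$ lie inside $\iota(E^{\times}) \subseteq J := \iota(E^{\times}) I_F^{n+d}$. By the construction of $\Lambda_{\theta,\psi,\alpha}$ given in the paragraph preceding Lemma \ref{lm:char_of_Pi_from_theta_psi_alpha}, its restriction to $\iota(E^{\times})$ equals $\theta \circ \iota^{-1}$, hence its restriction to $F^{\times}\cdot 1$ is exactly $\theta|_{F^{\times}}$. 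Since compact induction preserves central characters, $\omega_{{\rm BH}_{\theta,\psi,\alpha}} = \theta|_{F^{\times}}$.

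Next, for the level, I would apply the standard formula from \cite{BushnellH_06} \S15 (see also \S12.6): the level of a cuspidal representation induced from a cuspidal type $(\fJ_F, J, \Lambda)$ equals $r/e(\fJ_F)$, where $r$ is characterised by $\Lambda|_{I_F^{r+1}}=1$ but $\Lambda|_{I_F^r} \neq 1$, and here $e(\fJ_F) = 2$. By construction $\Lambda_{\theta,\psi,\alpha}|_{I_F^{n+d}} = \psi_{\fM,\iota(\alpha)}|_{I_F^{n+d}}$ factors through $I_F^{n+d}/I_F^{m+d+1}$, so $r \leq m+d$. The reverse inequality follows from the fact that $\theta$ has level exactly $m$ combined with the commutative square of Lemma \ref{lm:dual_spaces_of_characters}(iii): non-triviality of $\theta$ on $U_E^m/U_E^{m+1}$ forces $\alpha$ to have $\ord_E(\alpha) = -(m+d)$, and then Lemma \ref{lm:dual_spaces_of_characters}(i) implies that $\psi_{\fM,\iota(\alpha)}$ is non-trivial on $I_F^{m+d}/I_F^{m+d+1}$. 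Hence $r = m+d$ and $\ell({\rm BH}_{\theta,\psi,\alpha}) = (m+d)/2$.

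For the minimality statement, let $\phi$ be any character of $F^{\times}$ and write $\phi_G := \phi \circ \det$. Since $\det\circ\iota = \N_{E/F}$, one has $\phi_G \cdot {\rm BH}_{\theta,\psi,\alpha} \cong \cInd_J^{G(F)}(\phi_G\Lambda_{\theta,\psi,\alpha})$, and the new inducing character restricts to $\theta\phi_E$ on $\iota(E^{\times})$, where $\phi_E := \phi\circ \N_{E/F}$. By the minimality of $\theta$ over $F$ we have $\ell(\theta\phi_E) \geq \ell(\theta) = m > d$, so the computation of the previous paragraph applies verbatim to $\theta\phi_E$ (possibly with an adjusted $\alpha$) and yields $\ell(\phi_G\cdot {\rm BH}_{\theta,\psi,\alpha}) = (\ell(\theta\phi_E)+d)/2 \geq (m+d)/2 = \ell({\rm BH}_{\theta,\psi,\alpha})$. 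The strict positivity $\ell({\rm BH}_{\theta,\psi,\alpha}) > 0$ is immediate since $m+d \geq 2$.

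The main obstacle will be matching the two filtrations rigorously --- the Iwahori filtration $\{I_F^r\}$ on the $G(F)$-side and the elliptic filtration $\{U_E^k\}$ on the torus side --- while keeping careful track of the discriminant shift by $d$; Lemma \ref{lm:dual_spaces_of_characters}(iii) is precisely designed for this translation, so once that lemma is trusted the level computation reduces to bookkeeping. A secondary subtlety is to check that under the twist by $\phi_G$ the character $\phi_G\Lambda_{\theta,\psi,\alpha}$ really defines a BH cuspidal type datum of the same shape $(\fJ_F, \iota(E^{\times})I_F^{n+d})$; this is automatic because the condition $\ell(\theta\phi_E) \geq m > d$ keeps us in the ramified wild regime and prevents the level from dropping below the threshold $d+1$ where a different stratum type would be required.
\end{pro*}
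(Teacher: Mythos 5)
The paper's own proof of this lemma is a single line (``clear from the construction''), and your unpacking of the central character and level claims is correct and is exactly what that line is hiding: $F^{\times}\subseteq\iota(E^{\times})$ carries $\theta|_{F^\times}$, and since $\theta$ has level exactly $m$, Lemma \ref{lm:dual_spaces_of_characters}(iii) forces $\ord_E(\alpha)=-(m+d)$, so $\Lambda_{\theta,\psi,\alpha}$ is nontrivial on $I_F^{m+d}$ and trivial on $I_F^{m+d+1}$, whence level $\frac{m+d}{2}$ (here you are implicitly using that the leading stratum $(\fI_F,m+d,\iota(\alpha))$ is fundamental -- indeed ramified simple, since $m+d$ is odd and $\iota(\alpha)$ generates $E$ -- which is what makes ``level $=$ drop$/2$'' legitimate; citing \cite{BushnellH_06} for this is fine).

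The minimality step, however, has a gap as written. You assert that for an arbitrary $\phi$ ``the computation of the previous paragraph applies verbatim to $\theta\phi_E$ (possibly with an adjusted $\alpha$)'' and that the twisted datum ``is automatically'' of the same shape. Neither is automatic: $\phi_G\cdot{\rm BH}_{\theta,\psi,\alpha}=\cInd_J^{G(F)}(\phi_G|_J\Lambda_{\theta,\psi,\alpha})$ with the \emph{same} group $J=\iota(E^{\times})I_F^{n+d}$, and when $\ell(\theta\phi_E)>m$ the character $\phi_G|_J\Lambda_{\theta,\psi,\alpha}$ is not of the form $\Lambda_{\theta',\psi,\alpha'}$ (those live on the smaller group $\iota(E^{\times})I_F^{n'+d}$ with $m'=2n'+d-1$), so your level formula for the twist cannot be quoted ``verbatim''. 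Moreover, to convert the drop of the twisted character along the $I_F$-filtration into the level of the induced representation one must again know that the new leading stratum is fundamental, i.e.\ that the scalar term contributed by $\phi\circ\det$ cannot cancel against $\iota(\alpha)$; this is the actual content of minimality. The clean repair is the parity argument the paper uses on the geometric side (Corollary \ref{cor:irred_level_minimality}): $m+d$ is odd, while any scalar has even $\fJ_F$-valuation, so ${\rm BH}_{\theta,\psi,\alpha}$ contains a \emph{ramified simple} stratum of odd level $m+d$ over the Iwahori order, its level $\frac{m+d}{2}$ is not an integer, and twisting by a character of $F^{\times}$ (whose level is an integer) can therefore never lower the level -- this is the standard Bushnell--Henniart fact that the one-line proof is invoking. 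With that substitution your argument becomes complete; your appeal to minimality of $\theta$ alone (via the restriction of the inducing character to $\iota(E^{\times})$) does not by itself control the level of the twist, since containing a deep character of $\iota(E^{\times})$ does not preclude fixed vectors under congruence subgroups of other chain orders.
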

\begin{proof}
Clear from the construction of ${\rm BH}_{\theta, \psi, \alpha}$.
\end{proof}


\subsection{Traces of some minimal elements}\label{sec:hard_traces}

To complete the proof of Theorem \ref{thm:relation_ADLV_BH} we have to show the following proposition.

\begin{prop}\label{prop:hard_traces}
Assume that $\beta^{\vee}(\tilde{\theta}) = (\theta, \psi_{E,\alpha})$. For any $g \in \iota(E^{\times})I_F^{n+d}$ with $\ord_F(\det(g))$ odd, one has
\[
\tr(g; \Xi_{\tilde{\theta}}) = \tr \left(g; \cInd\nolimits_{\iota(E^{\times})I_F^{n+d}}^{\iota(E^{\times})I_F} \Lambda_{\theta, \psi, \alpha} \right).
\] 
\end{prop}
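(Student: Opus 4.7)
My plan is to expand both sides of the desired equality into explicit finite sums and then match them via the isomorphism $\beta$ from Proposition \ref{prop:beta}.

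For the right-hand side, I would first observe that $I_F^{n+d}$ is normal in $I_F$ and is normalized by $\iota(E^{\times})$ (since $\iota(E^{\times})$ normalizes the Iwahori filtration). Hence $J:=\iota(E^{\times})I_F^{n+d}$ is normal of finite index in $H:=\iota(E^{\times})I_F$, with
\[
H/J \;\cong\; I_F\,/\,\iota(U_E)I_F^{n+d}.
\]
The character formula for induction from a normal subgroup then gives
\[
\tr\bigl(g;\cInd\nolimits_J^H\Lambda_{\theta,\psi,\alpha}\bigr) \;=\; \sum_{x\in I_F/\iota(U_E)I_F^{n+d}} \Lambda_{\theta,\psi,\alpha}(x^{-1}gx).
\]
I would then pick Iwahori-type representatives $x=e_-(s)e_+(u)$ modulo $\iota(U_E)I_F^{n+d}$, using the fact that $\Lambda_{\theta,\psi,\alpha}$ restricts on $I_F^{n+d}$ to $\psi_{\fM,\iota(\alpha)}$ and on $\iota(E^{\times})$ to $\theta\circ\iota^{-1}$, and compute $\Lambda_{\theta,\psi,\alpha}(x^{-1}gx)$ as an explicit product of $\theta$ of a ``diagonal part'' times $\psi_{E,\alpha}$ (via $\tr_{E/F}=\tr_{\fM}\circ\iota$) of an ``off-diagonal'' part depending on $s,u$.

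For the left-hand side, apply Proposition \ref{prop:traces_in_general} directly:
\[
\tr(g;\Xi_{\widetilde{\theta}}) \;=\; \frac{\widetilde{\theta}(i(\unife,0))^{\ord_F(\det g)}}{q^m}\sum_{a\in A_g}\widetilde{\theta}(i(t_a,\bar r_a)).
\]
Then invoke the hypothesis $\beta^{\vee}(\widetilde{\theta})=(\theta,\psi_{E,\alpha})$: using the explicit formula
\[
\beta(x,y) \;=\; \bigl(x(1+y)^{-1},\,\overline{\unife^{-n}y}(1+\overline{y})^{-1}\bigr)
\]
from Proposition \ref{prop:beta}, rewrite each $\widetilde{\theta}(i(t_a,\bar r_a))$ as $\theta(\cdot)\,\psi_{E,\alpha}(\cdot)$ evaluated at rational functions of the coordinate $a$, where $t_a,\bar r_a$ come from Proposition \ref{prop:traces_in_general}.

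The matching step then proceeds as follows. Write $g=\iota(\unife)^k g_0$ with $k=\ord_F(\det g)$ odd and $g_0\in\iota(U_E)I_F^{n+d}$. Using Proposition \ref{prop:all_actions}(iii) one computes $A_g$ and $h(g,a)$ explicitly; the key observation is that the condition $\beta_g(a)\equiv a\mod\fp_E^{n+d+1}$ together with $g\in\iota(E^{\times})I_F^{n+d}$ allows one to parametrize $A_g$ by the same finite set that parametrizes the cosets $I_F/\iota(U_E)I_F^{n+d}$, via an explicit bijection $a\mapsto x_a$. Under this bijection, the factor $\psi_{E,\alpha}$ emerging from $\widetilde{\theta}\circ\beta$ matches $\psi_{\fM,\iota(\alpha)}$ applied to the nilpotent part of $x_a^{-1}gx_a-1$, while the $\theta$-factor and the prefactor $\widetilde{\theta}(i(\unife,0))^k$ absorb the ``central'' piece $\theta(\iota^{-1}(\iota(\unife)^k))$ coming from the $\iota(E^{\times})$-part of $J$. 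The normalization $q^{-m}$ accounts for the index $[I_F:\iota(U_E)I_F^{n+d}]$ divided by $|A_g|$.

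The main obstacle is the precise combinatorial matching between the two parametrizations. The ``exotic'' isomorphism $\beta$ (Remark \ref{rem:about_the_exotic_iso}) is not a direct product decomposition, so the value $\widetilde{\theta}(i(t_a,\bar r_a))$ is not just $\theta(t_a)$ times an additive character of $\bar r_a$; the twist in $\beta$ has to cancel exactly against a corresponding twist coming from the Iwahori decomposition of the conjugate $x_a^{-1}gx_a$. Controlling this cancellation modulo $\fp_E^{m+1}$, together with the interdependence of the parameters imposed by $m=2n+d-1$ and $2n>d$ (which for instance lets us use Lemma \ref{lm:direct_computations}(i) and Lemma \ref{lm:kind_of_splitting} to linearize expressions), is the technical heart of the computation. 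The hypothesis that $\ord_F(\det g)$ is odd is essential: it forces $g$ to shift between the two ``halves'' of the Iwahori attached to $P_{1/2}$ and its neighbour, so that the parametrizations on both sides are governed by the same nontrivial piece of the building.
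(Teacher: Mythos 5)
Your overall strategy is the same as the paper's (expand the geometric side via Proposition \ref{prop:traces_in_general}, the induced side as a sum over cosets of $\iota(U_E)I_F^{n+d}$ in $I_F$, then match summands through $\beta$), but there is a genuine error at the very first step of your induced-side computation: $J=\iota(E^{\times})I_F^{n+d}$ is \emph{not} normal in $H=\iota(E^{\times})I_F$. Normality of $I_F^{n+d}$ in $H$ (which is true) does not give normality of $J$; for instance $e_+(1)\,\iota(\unife)\,e_+(1)^{-1}\iota(\unife)^{-1}=\matzz{1}{1}{\uniff}{1+\uniff}$, and comparing its entries with $\iota(c_0+c_1\unife)=\matzz{c_0+\Delta c_1}{c_1}{\uniff c_1}{c_0}$ shows it does not lie in $\iota(U_E)I_F^{n+d}$ once $n+d\geq 3$. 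Hence your unrestricted character sum over all of $I_F/\iota(U_E)I_F^{n+d}$ is not the trace of $\cInd_J^H\Lambda_{\theta,\psi,\alpha}$: one must use the Frobenius/Mackey formula with the support condition $x^{-1}gx\in J$, and determining this support set is itself a substantive step (the analogue of Lemma \ref{sublm:conj_to_iotaE}); for $n\geq d$ only about $2q^{d}$ of the $(q-1)q^{n+d-1}$ cosets contribute, namely those with $y\equiv 1$ and $\lambda\equiv 0$ or $\uniff^{-1}\Delta$ to the appropriate precision.

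This defect propagates into your matching step. A bijection between $A_g$ and the full coset space $I_F/\iota(U_E)I_F^{n+d}$ cannot exist (the cardinalities differ), and $q^{-m}$ is not the index divided by $\sharp A_g$: the prefactor $1/q^m$ cancels because the summands $\widetilde{\theta}(i(t_a,\bar r_a))$ depend on $a$ only through $b\bmod\fp_E^{d}$ (writing $a=\unife(1+\unife^{n}b)$, resp.\ $a=\unife(\varepsilon+\unife^{n}b)$, resp.\ the analogous expression when $n<d$), so the reduction $A_g\twoheadrightarrow A_g^{\prime}$ has fibres of cardinality exactly $q^{m}$. What has to be matched is the reduced set $A_g^{\prime}$ — which for $n\geq d$ splits into two families, mirroring the two possibilities for $\lambda$ — against the \emph{contributing} cosets only. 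Beyond this, the technical heart (explicit formulas for $t_a,\bar r_a$, the decomposition $r_{y,\lambda}gr_{y,\lambda}^{-1}=D_{u,y,\lambda}D_{x,y,\lambda}\iota(C_{x,y,\lambda})\iota(\unife)$, and the verification that $\beta(C_{x,y,\lambda},z_{x,y,\lambda}+z_{u,y,\lambda})$ equals $i(t_{\gamma(y,\lambda)},r_{\gamma(y,\lambda)})$ for an explicitly constructed bijection $\gamma$) is only asserted in your proposal; but the structural error above has to be repaired before that computation can even be set up, since the bijection exists precisely between $A_g^{\prime}$ and the Mackey support set.
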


The rest of Section \ref{sec:hard_traces} is devoted to a proof of Proposition \ref{prop:hard_traces}. Central characters on both sides being equal, we may multiply $g$ by an appropriate central element and hence assume that $\ord_F(\det(g)) = 1$. Note that $I_F^{2(n+d)} = I_F^{m+d+1}$ acts trivial on both sides, hence we always may regard $g$ (and its constituents) modulo $I_F^{2(n+d)}$. Again, multiplying with an appropriate central element, we may assume that 
\[
g = g^{\prime} \iota(\unife) = u \iota(1 + \unife x) \iota(\unife), \quad \text{with $x \in \caO_F$ and $u \in I_F^{n+d}$}
\]
Recall the notation $\delta$ from Section \ref{sec:irred_and_cusp}. We may write 
\[
u = 1 + \matzz{u_1}{u_2}{u_3}{u_4} = 1 + \matzz{ \uniff^{ \frac{n+d+1-\delta}{2} } u_1^{\prime} }{ \uniff^{\frac{n+d - 1 + \delta}{2}} u_2^{\prime} }{ \uniff^{\frac{n+d +1 + \delta}{2} } u_3^{\prime} }{ \uniff^{\frac{n+d+1-\delta}{2}} u_4^{\prime}} 
\]
with some $u^{\prime}_i \in \caO_F$. We also have
\[
\matzz{g_1}{g_2}{g_3}{g_4} := g^{\prime} = u \iota(1 + \unife x) = \matzz{(1 + \Delta x) + u_1(1 + \Delta x) + u_2 \uniff  x}{x + u_1 x + u_2}{\uniff x + u_3 (1 + \Delta x)+ u_4\uniff x}{1 + u_3 x + u_4}.
\]
\noindent Let also
\[
j := g_2 \varepsilon\unife + g_1(a^{\prime} + 1 + \varepsilon).
\]
We use these notations until the end of Section \ref{sec:hard_traces}.


\subsubsection{Traces on the geometric side} 
\label{sec:trace_on_geom_side}

We use Proposition \ref{prop:traces_in_general} and notation from there. We introduce also the notation
\[
\ell := \left\lfloor \frac{n+d+1}{2} \right\rfloor = \frac{n + d + 1 - \delta}{2} \quad \text{ and } \quad \delta_{\ell} := \begin{cases} 0 & \text{if $\ell$ even,} \\ 1 & \text{if $\ell$ odd}. \end{cases}
\] 
For $a \in  (\fp/\fp^{n+d+m+1})^{\ast}$ we write $a = a^{\prime}\unife$ with $a^{\prime} \in U_E/U_E^{n+d+m}$.
\begin{lm}\label{lm:A_g}
The set $A_g$ from equation \eqref{eq:def_A_g} consists of exactly such $a = a^{\prime} \unife \in (\fp_E/\fp_E^{n+d+m+1})^{\ast}$, for which 
\[
(a^{\prime} + 1)(a^{\prime} + \varepsilon) \equiv 0 \mod \fp_E^{n+d}.
\]
holds. Thus,
\[ A_g = \begin{cases} \{ a \colon a^{\prime} \equiv 1 \mod \fp_E^n \} \dot{\cup} \{ a \colon a^{\prime} \equiv \varepsilon \mod \fp_E^n \} & \text{if $n > d$,} \\  \left\{ a \colon a^{\prime} \equiv 1 \mod \fp_E^{\ell} \right\} & \text{if $n \leq d$.} \end{cases} \]
\end{lm}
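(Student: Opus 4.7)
The plan is to unpack the defining condition $\beta_g(a) \equiv a \bmod \fp_E^{n+d+1}$ using the formula for $\beta_g$ from Proposition \ref{prop:all_actions}(iii), applied to $g = g'\iota(\unife)$ with $g' = u\,\iota(1+\unife x) \in I_F$. Since the denominator $\mathrm{Denom} := g_2'\varepsilon\unife + g_1'(a'+1+\varepsilon)$ is a unit of $\caO_E$ (as $g_1'$ and $a'+1+\varepsilon$ are units while $g_2'\varepsilon\unife \in \fp_E$), the condition reads $N \equiv 0 \bmod \fp_E^{n+d+1}$, where
\[
N := g_4'\varepsilon\unife + g_3'(a'+1+\varepsilon) + a'\unife\cdot\mathrm{Denom}
\]
(in characteristic $2$). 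I split $N = N_0 + N_u$ into its $u = 1$ contribution and its $u_i$-correction. A term-by-term valuation check, using the bounds $u_1, u_4 \in \fp_E^{n+d+1-\delta}$, $u_2 \in \fp_E^{n+d-1+\delta}$, $u_3 \in \fp_E^{n+d+1+\delta}$ together with the additional factor of $\unife$ or $\uniff = \varepsilon\unife^2$ multiplying each $u_i$, shows that every summand of $N_u$ has $\ord_E \geq n+d+1$ and so is killed modulo $\fp_E^{n+d+1}$.

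The main computation is the factorization of $N_0$. Expanding and collecting, $N_0 = \unife(a'+1)(a'+\varepsilon) + \unife\Delta x\,a'(a'+1+\varepsilon) + \uniff x(1+\varepsilon)$. Plugging in $\Delta = \unife^{d+1}\varepsilon_0$ and $1+\varepsilon = \unife^d\varepsilon_0$ makes the two perturbations both equal $\unife^{d+2}\varepsilon_0 x$ times something; the characteristic-$2$ identity $a'(a'+1+\varepsilon) + \varepsilon = (a'+1)(a'+\varepsilon)$ then yields the exact factorization
\[
N_0 = \unife(a'+1)(a'+\varepsilon)\bigl(1 + \unife^{d+1}\varepsilon_0 x\bigr).
\]
Since $1 + \unife^{d+1}\varepsilon_0 x \in U_E$, the condition $N \equiv 0 \bmod \fp_E^{n+d+1}$ is equivalent to $(a'+1)(a'+\varepsilon) \equiv 0 \bmod \fp_E^{n+d}$, which is the first assertion.

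For the case distinction, set $v_1 = \ord_E(a'+1)$ and $v_2 = \ord_E(a'+\varepsilon)$. The identity $(a'+1) + (a'+\varepsilon) = \unife^d\varepsilon_0$ forces $\min(v_1, v_2) \leq d$, with equality whenever $v_1 \neq v_2$. Requiring $v_1 + v_2 \geq n+d$ then bifurcates. If $n > d$, the case $v_1 = v_2$ is impossible (it would force $v_1 > d$), and the two disjoint possibilities $v_1 \geq n$ and $v_2 \geq n$ translate to $a' \equiv 1$, respectively $a' \equiv \varepsilon$, modulo $\fp_E^n$; these sets are disjoint because $1 - \varepsilon \notin \fp_E^n$. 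If $n \leq d$, a short check gives $v_1 + v_2 \geq n+d \Leftrightarrow \min(v_1,v_2) \geq \ell$, and since $\ell \leq d$ the two candidate conditions $a' \equiv 1$, $a' \equiv \varepsilon$ modulo $\fp_E^\ell$ coincide, producing the single branch stated.

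The step I expect to be the main obstacle is the exact factorization of $N_0$: two \emph{a priori} independent correction terms of the same order $d+2$, both originating from the expansion of $\iota(1+\unife x)$, must assemble into the single multiplicative unit $1 + \unife^{d+1}\varepsilon_0 x$ on the leading term. Without this characteristic-$2$ coincidence, the description of $A_g$ would not be independent of $x$ and the clean statement of the lemma would fail.
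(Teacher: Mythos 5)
Your proof is correct. Note that the paper itself leaves the proof of this lemma empty, so there is no argument to compare against; your route --- rewriting $\beta_{g'\iota(\unife)}(a)\equiv a \bmod \fp_E^{n+d+1}$ via Proposition \ref{prop:all_actions}(iii), killing the $u$-corrections by the valuation bounds coming from $u\in I_F^{n+d}$, and the exact characteristic-$2$ factorization $N_0=\unife(1+\Delta x)(a'+1)(a'+\varepsilon)$ --- is the natural one and is consistent with the paper's subsequent formulas for $j$ and $h(g,a)$ in Lemma \ref{lm:j_h_R}. The valuation dichotomy ($\min(\ord_E(a'+1),\ord_E(a'+\varepsilon))\le d$, with the threshold $\ell=\lfloor\frac{n+d+1}{2}\rfloor$ when $n\le d$) also checks out, so the case split in the statement follows as you argue.
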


\begin{proof}
\end{proof}

Below we will compute explicit formulas for $t_a,r_a$. We will see that they and hence also the trace $\tr(g;\Xi_{\tilde{\theta}})$ only depend on $b \mod \fp_E^d$ (if $n < d$, it is even true that they only depend on $b \mod \fp_E^{\ell - 1 + \delta}$). Note that characteristic $2$ is used for that, in particular to deal with the monomials occurring in $t_a$ and containing $b^2$). Thus letting
\begin{equation}\label{eq:A_g_prime}
A_g^{\prime} = 
\begin{cases} \{\unife(1 + \unife^n b) \colon b \in \caO_E/\fp_E^d \} \cup \{\unife(\varepsilon + \unife^n b) \colon b\in \caO_E/\fp_E^d \} & \text{if $n > d$,} \\ 
\{\unife(1 + \unife^{\ell} b) \colon b \in \caO_E/\fp_E^{...} \} & \text{if $n \leq d$,} \end{cases}
\end{equation}
\noindent and regarding $b$ as an element in $\caO_E/\fp_E^d$, the multiplicity $q^m$ cancels with the term $\frac{1}{q^m}$ in the trace formula in Proposition \ref{prop:traces_in_general}, and we see that 
\begin{equation*}
\tr(g; \Xi_{\tilde{\theta}}) = \theta(\unife) \sum\limits_{a \in A_g^{\prime}} \tilde{\theta}(i(t_a,r_a))
\end{equation*}
\noindent where $t_a$ is given by the formulas \eqref{eq:ta_for_n_bigger_d}, \eqref{eq:second_eq_for_ta_n_geq_d} and \eqref{eq:second_eq_for_ta_n_smaller_d} and $r_a$ is given by \eqref{eq:ra_for_n_bigger_d} and \ref{lm:j_h_R_n_smaller_d}.

To compute $t_a,r_a$ explicitly, we will consider two cases: $n\geq d$ and $n<d$. Note that in contrast to what Lemma \ref{lm:A_g} let one guess, the case $n = d$ shows behavior similar to $n>d$. Lemma \ref{sublm:conj_to_iotaE} below might be seen as an explanation for this fact.


\noindent \underline{\textbf{Case $n \geq d$.}} Let $a = \unife a^{\prime} \in A_g$. By Lemma \ref{lm:A_g} we may assume that $a^{\prime} \equiv 1 \mod \fp_E^n$ or $a^{\prime} \equiv \varepsilon \mod \fp_E^n$. We only handle the first case, the second being completely analogous. By Lemma \ref{lm:A_g} we may write 

\begin{align*}
a^{\prime} &= 1 + \unife^n b  &\text{with $b \in \caO_E/\fp_E^{m + d + 1}$, and} \\
b &= \unife^{\delta} A + \unife^{1 - \delta} B, &\text{with $A \in \caO_F/\fp_F^{\frac{d+1}{2} - \delta + \frac{m}{2}}$ and $B \in \caO_F/\fp_F^{\frac{d-1}{2} + \delta + \frac{m}{2}}$.} 
\end{align*}

\begin{lm}\label{lm:j_h_R} We have
\[ \begin{aligned}
 j &= \varepsilon + \unife x \varepsilon^2 + (1 + \Delta x)\unife^n b + [u_1(\varepsilon + \unife x \varepsilon^2 + (1 + \Delta x)\unife^n b) + u_2 ( \varepsilon \unife + \uniff x(\varepsilon + \unife^n b))] \\
 &\equiv \varepsilon (1 + \unife x \varepsilon) \left( 1 + \frac{(1 + \Delta x)\unife^n b}{\varepsilon (1 + \unife x \varepsilon)} + u_1 + \unife u_2\right) \mod \fp_E^{m+1} \\
h(g,a) &\equiv  \frac{1 + \Delta x}{\varepsilon(1 + \unife \varepsilon x)} (\varepsilon_0 b + \unife^{n-d}b^2)\left( 1 + \frac{\unife^n b}{1 + \unife x} \right) + \dots \\
&\quad+ \unife^{1-\delta} \varepsilon^{\frac{n+d+1-\delta}{2}} (1 + \unife^n b)(u_1^{\prime} + u_4^{\prime}) + \unife^{\delta} \varepsilon^{\frac{n+d-1+\delta}{2}} (u_2^{\prime} + \varepsilon u_3^{\prime}) \mod \fp_E^{n+d}\\
R &\equiv \varepsilon_0 (1 + \uniff^{\frac{n+\delta}{2}} A) \mod \fp_E^{n+d}.
\end{aligned}
\]
\end{lm}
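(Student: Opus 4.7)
\begin{pro}[Proof proposal]
All three identities are obtained by direct substitution followed by expansion in characteristic $2$, keeping track of the orders of the error terms using $m = 2n+d-1$ and the explicit ranges of $u_1,\dots,u_4$ coming from $u \in I_F^{n+d}$. I would treat them in the order $R$, $j$, $h(g,a)$, since the expressions for the last two both contain $R$-like quantities.

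First, the formula for $R$. By Proposition \ref{prop:eADLV_higher_level}, $R = \unife^{-(d+1)}(\tau(a)+a)$. Substituting $a = \unife(1+\unife^n b)$ and using $\tau(\unife) = \varepsilon\unife$, one gets
\[
\tau(a)+a \;=\; \unife\bigl[(1+\varepsilon) + \unife^n(b + \varepsilon^{n+1}\tau(b))\bigr] \;=\; \unife^{d+1}\varepsilon_0 + \unife^{n+1}\bigl(b+\varepsilon^{n+1}\tau(b)\bigr).
\]
Now decompose $b = \unife^\delta A + \unife^{1-\delta}B$ with $A,B\in\caO_F$, so that $\tau(b) = \unife^\delta\varepsilon^\delta A + \unife^{1-\delta}\varepsilon^{1-\delta}B$. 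Using $\varepsilon = 1+\unife^d\varepsilon_0$ and the characteristic-$2$ identity $1 + \varepsilon^k \equiv k\unife^d\varepsilon_0 \pmod{\fp_E^{2d}}$, one checks case-by-case (both parities of $n$) that exactly one of the integers $n+1+\delta,\ n+2-\delta$ is odd, so the $B$-contribution vanishes modulo $\fp_E^{2d}$ while the $A$-contribution produces $\unife^{d+\delta}\varepsilon_0 A$. Dividing by $\unife^{d+1}$ and rewriting $\unife^{n+\delta} = \uniff^{(n+\delta)/2}$ (valid since $n+\delta$ is even) gives the claimed formula for $R$.

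Second, the formula for $j$. Substituting the explicit matrix entries $g_1, g_2$ and $a'+1+\varepsilon = \unife^n b + \varepsilon$ (using $1+\varepsilon = \unife^d\varepsilon_0$) into $j = g_2\varepsilon\unife + g_1(a'+1+\varepsilon)$, one groups the resulting sum as ``no-$u$ part'' $+\, u_1(\cdot) + u_2(\cdot)$. The essential algebraic simplification in the no-$u$ part is $\unife + \Delta = \varepsilon\unife$, which yields $\varepsilon + \varepsilon x(\unife+\Delta) = \varepsilon(1+\varepsilon x\unife)$; this gives the exact identity written in the statement. The approximate identity then reduces to showing that the cross terms $u_1(1+\Delta x)\unife^n b$ and the analogous $u_2$-term lie in $\fp_E^{m+1}$, which follows from $u_i\in\fp_E^{n+d-1+\delta}\cup\fp_E^{n+d+1-\delta}$ combined with the valuations of the remaining factors and the equality $m+1 = 2n+d$.

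Third, and most involved, the formula for $h(g,a)$. By definition $h(g,a) = \unife^{-(n+d+1)}(\beta_g(a)-a)$, and by Proposition \ref{prop:all_actions}(iii),
\[
\beta_g(a)-a \;=\; \frac{g_4\varepsilon\unife + g_3(a'+1+\varepsilon) - a\,j}{j}.
\]
I would expand the numerator by substituting the formulae for $g_3,g_4$ and the factored form of $j$ from Step~2, then systematically drop every monomial of $\ord_E \geq 2n+2d+1$ (those which, after division by $\unife^{n+d+1}$, land in $\fp_E^{n+d}$). The $u$-free part of the numerator reduces, after dividing by $j$, to the first displayed summand: the factor $(\varepsilon_0 b + \unife^{n-d}b^2)$ arises because $a$ and $\tau(a)$ share the term $\varepsilon\unife$, so the leading cancellation $g_4\varepsilon\unife + g_3\varepsilon - a j$ leaves exactly $\unife^{n+d+1}\varepsilon_0 b$ at lowest order, and the quadratic correction $\unife^{n-d}b^2$ comes from the next iteration of the geometric series for $j^{-1}$ together with the mixed term $(1+\Delta x)\unife^n b$. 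The two $u$-contributions come directly from the principal parts of $u_1,u_4$ (diagonal) and $u_2,u_3$ (anti-diagonal), using $g_4 = 1 + u_3 x + u_4$ and substituting the lowest-order behaviour of $1/j \sim \varepsilon^{-1}$.

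The main obstacle is Step~3: the bookkeeping of orders is delicate because the $u_i$'s have asymmetric valuations depending on the parity-switch encoded by $\delta$, because there is genuine interplay between $b$- and $u$-dependent terms, and because the quadratic term in $b$ requires iterating the expansion of $j^{-1}$ at the correct precision. No new idea beyond patient expansion is required, but the calculation must be carried out carefully and uniformly in both cases $\delta=0$ and $\delta=1$.
\end{pro}
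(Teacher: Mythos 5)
Your proposal is correct and follows essentially the same route as the paper's proof: there, too, the exact expression for $j$ is obtained by direct substitution (the cross terms being discarded via $m+1=2n+d$), the formula for $h(g,a)$ comes from the exact identity $\unife^{n+d+1}h(g,a)\,j = g_4\varepsilon\unife + g_3(a^{\prime}+1+\varepsilon) - a\,j$ followed by multiplication with the expansion of $j^{-1}$ modulo $\fp_E^{n+d}$, and $R$ is read off directly from the decomposition of $a^{\prime}$ into its $\caO_F$-components. One caution for executing your Step 3: the lowest-order behaviour $1/j\sim\varepsilon^{-1}$ is not sufficient for the $u$-contributions (it misses the factor $(1+\unife\varepsilon x)^{-1}$ and the linear-in-$b$ term of $j^{-1}$, which are exactly what produces the factor $(1+\unife^n b)$ on $u_1^{\prime}+u_4^{\prime}$ and the cancellations in the $u_2^{\prime},u_3^{\prime}$ terms), but this is already subsumed in your stated rule of keeping every numerator monomial of order $<2n+2d+1$ and dividing by the full factored form of $j$.
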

\begin{proof}
The first formula for $j$ is straightforward, the second follows using $m + 1 = 2n + d$. From it we deduce 
\[
j \equiv \varepsilon + \unife x \varepsilon^2 + \unife^n b \mod \fp_E^{n+d},
\]
and hence
\begin{equation}\label{eq:j-inverse_mod_n+d}
j^{-1} \equiv \varepsilon^{-1}(1 + \unife \varepsilon x)^{-1}\left( 1 + \frac{\unife^n b}{1 + \unife x} \right) \mod \fp_E^{n+d}.
\end{equation}
Further, 
\[
g_4 \varepsilon \unife + g_3(a^{\prime} + 1 + \varepsilon) = \varepsilon \unife + \uniff x \varepsilon + \uniff x \unife^n b + [u_3 (\varepsilon + \unife x \varepsilon^2 + (1 + \Delta x)\unife^n b) + u_4 (\varepsilon \unife + \uniff x(\varepsilon + \unife^n b))]
\]
By definition, $\unife^{n+d+1}h(g,a)j = g_4 \varepsilon \unife + g_3(a^{\prime} + 1 + \varepsilon) - j \unife (1 + \unife^n b)$. Using $\Delta = \unife + \varepsilon \unife$ we deduce 
\[
\begin{aligned}
\unife^{n+d+1}h(g,a)j &= \Delta \unife^n b (1 + \Delta x) + \unife (1 + \Delta x) \unife^{2n} b^2 + \dots \\
&+ u_1\unife(1 + \unife^n b)(\varepsilon + \unife x \varepsilon^2 + (1 + \Delta x)\unife^n b) + u_2 \unife^2 \varepsilon(1 + \unife^n b) (1 + \unife x(\varepsilon + \unife^n b)) + \dots \\
&+ u_3 (\varepsilon + \unife x \varepsilon^2 + (1 + \Delta x)\unife^n b) + u_4 \unife \varepsilon (1 + \unife x(\varepsilon + \unife^n b))
\end{aligned}
\]
Recall that $\varepsilon_0 = \unife^{-(d+1)}\Delta$. We deduce
\[
\begin{aligned}
h(g,a)j &= (1 + \Delta x) (\varepsilon_0 b + \unife^{n-d}b^2)  + \dots \\
&+ \unife^{1-\delta} \varepsilon^{\frac{n+d+1-\delta}{2}} u_1^{\prime} (1 + \unife^n b)(\varepsilon + \unife x \varepsilon^2 + (1 + \Delta x)\unife^n b) + \dots \\
&+ \unife^{\delta} \varepsilon^{\frac{n+d+1+\delta}{2}} u_2^{\prime} (1 + \unife^n b) (1 + \unife x(\varepsilon + \unife^n b)) + \dots \\
&+ \unife^{\delta} \varepsilon^{\frac{n+d+1+\delta}{2}} u_3^{\prime} (\varepsilon + \unife x \varepsilon^2 + (1 + \Delta x)\unife^n b) + \dots \\
&+ \unife^{1-\delta} \varepsilon^{\frac{n+d+1-\delta}{2} + 1} u_4^{\prime} (1 + \unife x(\varepsilon + \unife^n b))
\end{aligned}
\] 
Finally, we compute modulo $\fp_E^{n+d}$ (using \eqref{eq:j-inverse_mod_n+d}),
\[
\begin{aligned}
h(g,a) &\equiv \frac{1 + \Delta x}{\varepsilon(1 + \unife \varepsilon x)} (\varepsilon_0 b + \unife^{n-d}b^2)\left( 1 + \frac{\unife^n b}{1 + \unife x} \right)  + \dots \\
&+ \unife^{1-\delta} \varepsilon^{\frac{n+d+1-\delta}{2}} u_1^{\prime} (1 + \unife^n b) + \dots \\
&+ \unife^{\delta} \varepsilon^{\frac{n+d-1+\delta}{2}} u_2^{\prime} + \\
&+ \unife^{\delta} \varepsilon^{\frac{n+d+1+\delta}{2}} u_3^{\prime}  + \dots \\
&+ \unife^{1-\delta} \varepsilon^{\frac{n+d+1-\delta}{2}} u_4^{\prime} (1 + \unife^n b).
\end{aligned}
\] 
This is exactly the claimed formula for $h(g,a)$. The computation of $R$ is straightforward, by using $a^{\prime} = 1 + \uniff^{\frac{n+\delta}{2}} A + \unife \uniff^{\frac{n-\delta}{2}} B$.
\end{proof}

As by definition $\beta^{\prime}_{g,a} = \frac{\det(u\iota(1 + \unife x))}{j}$, we see that $t_a \in U_E/U_E^{m+1}$ from Proposition \ref{prop:traces_in_general} is determined by $a$ by the following formula,
\begin{equation}\label{eq:ta_for_n_bigger_d}
t_a = \frac{\varepsilon\det(u\cdot\iota(1 + \unife x))}{j(1 + \unife^n h(g,a)R^{-1})} = \frac{\det(u)(1 + \unife x)}{\frac{j}{\varepsilon (1 + \unife\varepsilon x)}(1+\unife^n R^{-1}h)}.
\end{equation}
A straightforward computation utilizing Lemma \ref{lm:j_h_R} shows that its denominator is
\begin{equation}\label{eq:second_eq_for_ta_n_geq_d}
\frac{j}{\varepsilon(1 + \unife \varepsilon x)}(1 + \unife^n R^{-1}h) = 1 + \unife^n \varepsilon_0^{-1} \frac{1 + \Delta x}{\varepsilon(1 + \unife \varepsilon x)}(\unife^{n+\delta}\varepsilon_0 A b + \unife^{n-d} b^2 + \unife^{2n-d+\delta}Ab^2) + U,
\end{equation}
where
\[
\begin{aligned}
U &= \unife^{n+d+1-\delta} \varepsilon^{\frac{n+d+1-\delta}{2}} u_1^{\prime} + \unife^{n+d+\delta} \varepsilon^{\frac{n+d-1+\delta}{2}} u_2^{\prime} + \unife^n \varepsilon_0^{-1}\left(1 + \unife^{n+\delta} A + \frac{\unife^n b}{1 + \unife x}\right)\cdot \dots \\
&\dots\cdot \left( \unife^{1- \delta} \varepsilon^{\frac{n+d+1-\delta}{2}}(1 + \unife^n b)(u_1^{\prime} + \unife_4^{\prime}) + \unife^{\delta} \varepsilon^{\frac{n+d-1+\delta}{2}}(u_2^{\prime} + \varepsilon\unife_3^{\prime}) \right)
\end{aligned}
\]
is the part depending on $u$. Further, Lemma \ref{lm:j_h_R} also implies that
\begin{equation}\label{eq:ra_for_n_bigger_d}
r_a = R^{-1}h(g,a) = \frac{1}{\varepsilon_0(1 + \unife x)} (\varepsilon_0 b + \unife^{n-d}b^2) + \varepsilon_0^{-1}\unife^{1-\delta} (u_1^{\prime} + u_4^{\prime}) + \varepsilon_0^{-1} \unife^{\delta} (u_2^{\prime} + u_3^{\prime})
\in \caO_E/\fp_E^d.
\end{equation}


\noindent \underline{\textbf{Case $n < d$.}} 
Let $a = \unife a^{\prime} \in A_g$. By Lemma \ref{lm:A_g} we may write 

\begin{align*}
a^{\prime} &= 1 + \unife^{\ell} b &\text{with $b \in \caO_E/\fp_E^{n + m + d + 1 - \ell}$, and} \\
b &= \unife^{\delta_{\ell}} A + \unife^{1 - \delta_{\ell}} B, &\text{with $A,B$ elements of appropriate subquotients of $\caO_F$.} 
\end{align*}

\begin{lm}\label{lm:j_h_R_n_smaller_d} We have
\begin{align*}
j &= \varepsilon + \unife x \varepsilon^2 + (1 + \Delta x)\unife^{\ell} b + [u_1(\varepsilon + \unife x \varepsilon^2 + (1 + \Delta x)\unife^{\ell} b) + u_2 ( \varepsilon \unife + \uniff x(\varepsilon + \unife^{\ell} b))] \\
 &\equiv \varepsilon (1 + \unife x \varepsilon) \left( 1 + \frac{(1 + \Delta x)\unife^{\ell} b}{\varepsilon (1 + \unife x \varepsilon)} + u_1 + \unife u_2 \right) \mod \fp_E^{m+1} \\
h(g,a) &\equiv  \frac{1 + \Delta x}{\varepsilon(1 + \unife \varepsilon x)} (\unife^{1 - \delta}b^2 + \unife^{\ell - n} \varepsilon_0 b) \left( 1 + \frac{\unife^{\ell} b}{1 + \unife x} \right) + \dots \\
&\quad+ \unife^{1-\delta} \varepsilon^{\frac{n+d+1-\delta}{2}} (1 + \unife^{\ell} b)(u_1^{\prime} + u_4^{\prime}) + \unife^{\delta} \varepsilon^{\frac{n+d-1+\delta}{2}} (u_2^{\prime} + \varepsilon u_3^{\prime}) \mod \fp_E^{n+d}\\
R &\equiv \varepsilon_0 (1 + \unife^{\ell + \delta_{\ell}} A) \mod \fp_E^{n+d}.
\end{align*}
\end{lm}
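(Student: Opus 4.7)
The proof is a direct computation parallel to that of Lemma \ref{lm:j_h_R}; the only structural change is that the leading correction in $a^{\prime}$ is $\unife^{\ell}b$ in place of $\unife^n b$, and one reruns the same calculation while tracking how the shifted valuation $\ell = (n+d+1-\delta)/2$ propagates. The key difference is that for $n < d$ one has $2\ell = n+d+1-\delta$, which is only slightly larger than $n+d+1$; this explains the appearance of the new term $\unife^{1-\delta}b^2$ in the formula for $h(g,a)$ (in Lemma \ref{lm:j_h_R} the analogous term $\unife^{n-d}b^2$ sat much higher because $2n \gg n+d+1$, whereas here the $b$ and $b^2$ contributions are at comparable valuations).

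First one expands $j = g_2\varepsilon\unife + g_1(a^{\prime}+1+\varepsilon)$ from the explicit formulas for $g_1,g_2$, grouping the $u$-dependent terms into a bracket. Since products of two $u_i$'s as well as $u$-times-small-powers-of-$\unife$ all lie in $\fp_E^{2n+d} = \fp_E^{m+1}$, pulling out $\varepsilon(1+\unife x \varepsilon)$ yields the stated factorization; truncating one step further gives $j^{-1} \equiv \varepsilon^{-1}(1+\unife\varepsilon x)^{-1}(1 + \unife^{\ell}b/(1+\unife x)) \bmod \fp_E^{n+d}$, which is needed for the last step.

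Second, starting from $\unife^{n+d+1}h(g,a)j = g_4\varepsilon\unife + g_3(a^{\prime}+1+\varepsilon) - j\unife(1+\unife^{\ell}b)$ and expanding, the $\varepsilon\unife$-contributions cancel and the remaining pure-$b$ terms are $\Delta\unife^{\ell}b(1+\Delta x) = \unife^{d+1+\ell}\varepsilon_0 b(1+\Delta x)$ and $\unife^{2\ell+1}b^2(1+\Delta x)$. Dividing by $\unife^{n+d+1}$ and using $2\ell = n+d+1-\delta$ these become $\unife^{\ell-n}\varepsilon_0 b(1+\Delta x)$ and $\unife^{1-\delta}b^2(1+\Delta x)$, matching the displayed formula; the $u$-contributions carry over formally from Lemma \ref{lm:j_h_R} with only $\unife^n b$ replaced by $\unife^{\ell}b$. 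Multiplying by $j^{-1}$ to precision $\fp_E^{n+d}$ completes the formula for $h(g,a)$.

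Finally, $R = \unife^{-d}(\varepsilon\tau(a^{\prime}) + a^{\prime}) = \varepsilon_0 + \unife^{\ell-d}(\varepsilon^{\ell+1}\tau(b)+b)$. Writing $b = \unife^{\delta_{\ell}}A + \unife^{1-\delta_{\ell}}B$ with $A,B \in \caO_F$, the $\tau$-invariance of $A,B$ gives $\varepsilon^{\ell+1}\tau(b)+b = (\varepsilon^{\ell+1+\delta_{\ell}}+1)\unife^{\delta_{\ell}}A + (\varepsilon^{\ell+2-\delta_{\ell}}+1)\unife^{1-\delta_{\ell}}B$. In characteristic $2$, $(1+\unife^d\varepsilon_0)^{2k+1}+1 \equiv \unife^d\varepsilon_0 \bmod \fp_E^{2d}$ and $(1+\unife^d\varepsilon_0)^{2k}+1 \in \fp_E^{2d}$; since $\ell+1+\delta_{\ell}$ is always odd and $\ell+2-\delta_{\ell}$ always even, one obtains $R \equiv \varepsilon_0(1+\unife^{\ell+\delta_{\ell}}A) \bmod \fp_E^{n+d}$ using $\ell \geq n$. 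The main obstacle is purely bookkeeping of valuations, handling the two parity regimes $\delta_{\ell} \in \{0,1\}$ uniformly; no new idea beyond Lemma \ref{lm:j_h_R} is needed.
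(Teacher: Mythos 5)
Your proposal is correct and follows exactly the route the paper intends: it is the same computation as in Lemma \ref{lm:j_h_R}, rerun with $\unife^n b$ replaced by $\unife^{\ell}b$, using the identity $\unife^{n+d+1}h(g,a)j = g_4\varepsilon\unife + g_3(a^{\prime}+1+\varepsilon) - j\unife(1+\unife^{\ell}b)$, the relation $2\ell = n+d+1-\delta$ to locate the $\unife^{1-\delta}b^2$ and $\unife^{\ell-n}\varepsilon_0 b$ terms, and the direct expansion of $R$ with the parity argument for $\varepsilon^{\ell+1+\delta_{\ell}}+1$ and $\varepsilon^{\ell+2-\delta_{\ell}}+1$; the valuation bookkeeping (e.g.\ discarding $u$-terms in $\fp_E^{m+1}$, and $\ell \geq n$ for the $R$-estimate) all checks out. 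The only cosmetic point is that the remark about ``products of two $u_i$'s'' is vacuous, since $j$ is linear in the $u_i$; the relevant discarded terms are $u_i$ times elements of positive valuation, which you do handle correctly.
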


\begin{proof}
Straightforward computation, similar to the one in the proof of Lemma \ref{lm:j_h_R}.
\end{proof}

As in the case $n \geq d$, the element $t_a \in U_E/U_E^{m+1}$ from Proposition \ref{prop:traces_in_general} is given by the formula \eqref{eq:ta_for_n_bigger_d}, but now the denominator is 
\begin{equation}\label{eq:second_eq_for_ta_n_smaller_d}
\frac{j}{\varepsilon(1 + \unife \varepsilon x)}(1 + \unife^n R^{-1}h) = 1 + \unife^n \varepsilon_0^{-1} \frac{1 + \Delta x}{\varepsilon(1 + \unife \varepsilon x)}(\unife^{1-\delta}b^2 + \unife^{\ell}\cdot (\unife^{\delta_{\ell}}A) \cdot (\unife^{1 - \delta}b^2) + \unife^{2\ell - n}\varepsilon_0 (\unife^{\delta_{\ell}}A)b) + U,
\end{equation}
where
\[
\begin{aligned}
U &= u_1 + \unife u_2 + \unife^n \varepsilon_0^{-1}\left(1 + \unife^{\ell} (\unife^{\delta_{\ell}}A) + \frac{\unife^{\ell} b}{1 + \unife x}\right)\cdot \dots \\
&\dots\cdot \left( \unife^{1- \delta} \varepsilon^{\frac{n+d+1-\delta}{2}}(1 + \unife^{\ell} b)(u_1^{\prime} + \unife_4^{\prime}) + \unife^{\delta} \varepsilon^{\frac{n+d-1+\delta}{2}}(u_2^{\prime} + \varepsilon\unife_3^{\prime}) \right)
\end{aligned}
\]
is the part depending on $u$. Lemma \ref{lm:j_h_R_n_smaller_d} also show that
\[ r_a = R^{-1}h(1 + \unife^n R^{-1}h) \in \caO_E/\fp_E^d \]
with
\begin{align*}
 R^{-1}h &= \frac{\unife^{1- \delta}b^2}{\varepsilon_0 (1 + \unife x)}  + \frac{\unife^{\ell - n} b}{1 + \unife x} + \frac{\unife^{\ell + 1 - \delta}}{\varepsilon_0 (1+\unife x)^2}((1 + \unife x) \cdot (\unife^{\delta_{\ell}} A) \cdot b^2 + b^3) + \dots \\ 
 &\quad + \varepsilon_0^{-1}(1 + \unife^{\ell}(\unife^{1 - \delta_{\ell}} B))(u_1^{\prime} + u_4^{\prime}) + \varepsilon_0^{-1} \unife^{\delta}(1 + \unife^{\ell}(\unife^{\delta_{\ell}} A))(u_2^{\prime} + u_3^{\prime}) \\
 1 + \unife^n R^{-1}h &= 1 + \unife^n \varepsilon_0^{-1} \left(\frac{1}{1 + \unife x}(\unife^{1 - \delta} b^2 + \unife^{\ell - n} \varepsilon_0 b) + \unife^{1-\delta}(u_1^{\prime} + u_4^{\prime}) + \unife^{\delta}(u_2^{\prime} + u_3^{\prime})\right).
 \end{align*}


\subsubsection{Traces on the induced side}
Mackey formula gives:
\[
\tr\left(g; \Ind\nolimits_{\iota(E^{\times})I_F^{n+d}}^{\iota(E^{\times})I_F} \Lambda \right) = \sum_{y, \lambda} \Lambda\left(r_{y,\lambda} g r_{y,\lambda}^{-1}\right), 
\]
where the sum is taken only over such $y,\lambda$, for which $r_{y,\lambda} g r_{y,\lambda}^{-1} \in \iota(E^{\times})I_F^{n+d}$. The following lemma is true also for the wildly ramified $E/F$:
\begin{lm}[\cite{Ivanov_15_ram}, Lemma 5.15]
The elements
\[
r_{y,\lambda} := \matzz{1}{0}{0}{y} \matzz{1}{\lambda}{0}{1} \quad \text{with $y \in U_F/U_F^{\frac{n+d+1-\delta}{2}}$, $\lambda \in \caO_F/\fp_F^{\frac{n+d-1+\delta}{2}}$}
\] 
(where $y,\lambda$ are chosen to be fixed preimages in $U_F$ resp. $\caO_F$) form a set of coset representatives in $I_F$ for $\iota(E)I_F/\iota(E) I_F^{n+d} = I_F/\iota(U_E)I_F^{n+d}$.
\end{lm}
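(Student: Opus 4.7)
The plan is to first establish a global triangular decomposition $I_F = \iota(U_E) \cdot R$, where $R = \{r_{b,a} : b \in U_F, \, a \in \caO_F\}$ is the set of upper-triangular elements of $I_F$ with a $1$ in the upper-left corner, and then descend this decomposition modulo the normal subgroup $K := \iota(U_E) I_F^{n+d}$.

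For the global decomposition, given $g = \matzz{g_1}{g_2}{g_3}{g_4} \in I_F$, I solve for $v = c_0 + c_1\unife \in U_E$ so that the first column of $\iota(v)$ equals the first column of $g$. The equations $c_1\uniff = g_3$ and $c_0 + c_1\Delta = g_1$ force $c_1 = g_3/\uniff \in \caO_F$ and $c_0 = g_1 - c_1\Delta \in U_F$ (the latter because $c_1\Delta \in \fp_F$ and $g_1 \in U_F$). Then $\iota(v)^{-1} g$ has first column $(1,0)^t$, so it lies in $R$. The decomposition $g = \iota(v)\, r_{b,a}$ is unique since $\iota(U_E) \cap R = \{\mathrm{Id}\}$: an element $\iota(c_0 + c_1\unife)$ is upper-triangular only when $c_1 = 0$, and $\iota(c_0)$ lies in $R$ only when $c_0 = 1$.

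For the coset version I treat existence and uniqueness separately. Given $g = \iota(v)\, r_{b, a}$, let $(y, \lambda)$ be any lift of the class of $(b, a)$ to the prescribed quotients. A direct computation gives
\[ r_{y,\lambda}^{-1} r_{b,a} = \matzz{1}{a - \lambda b/y}{0}{b/y}, \]
and with $b/y \in U_F^{(n+d+1-\delta)/2}$ together with $a - \lambda b/y = (a-\lambda) + \lambda(1 - b/y) \in \fp_F^{(n+d-1+\delta)/2}$, comparison with the filtration defining $\fI_F^{n+d}$ places this element in $I_F^{n+d}$. Normality of $I_F^{n+d}$ then yields $g \in \iota(v)\, r_{y,\lambda}\, I_F^{n+d} = K \cdot r_{y,\lambda}$. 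Conversely, if $r_{y,\lambda}\, r_{y',\lambda'}^{-1} \in K$, I write this element as $\iota(u) h$ with $u = u_0 + u_1\unife \in U_E$ and $h \in I_F^{n+d}$, and compare entries of $\iota(u) h$ against those of the upper-triangular product $\matzz{1}{(\lambda - \lambda')/y'}{0}{y/y'}$: the lower-left filtration forces $u_1 \in \fp_F^{\lfloor (n+d)/2 \rfloor}$, the upper-left then gives $u_0 \in U_F^{\lfloor (n+d+1)/2 \rfloor}$, and the remaining two entries read off the required congruences $y/y' \in U_F^{(n+d+1-\delta)/2}$ and $(\lambda - \lambda')/y' \in \fp_F^{(n+d-1+\delta)/2}$.

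The main obstacle is the parity bookkeeping: one must verify that the filtration exponents $\lfloor(n+d+1)/2\rfloor$ and $\lfloor(n+d)/2\rfloor$ from the definition of $\fI_F^{n+d}$ coincide with $(n+d+1-\delta)/2$ and $(n+d-1+\delta)/2$ respectively, which amounts to a short case distinction on the parity of $n$ with $d$ odd. A secondary technical point is that the cross term $u_1\Delta$, with $\Delta \in \fp_F^{(d+1)/2}$, must be absorbed into $\fp_F^{\lfloor(n+d+1)/2\rfloor}$ when solving for $u_0$; this works in both parity cases because $d \geq 1$.
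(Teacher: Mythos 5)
Your proof is correct: the explicit factorization $I_F=\iota(U_E)\cdot R$ obtained by matching first columns, followed by reduction modulo $\iota(U_E)I_F^{n+d}$ with the filtration bookkeeping for $\fI_F^{n+d}$ (where indeed $\lfloor\tfrac{n+d+1}{2}\rfloor=\tfrac{n+d+1-\delta}{2}$ and $\lfloor\tfrac{n+d}{2}\rfloor=\tfrac{n+d-1+\delta}{2}$ because $d$ is odd, and the cross terms $u_1\Delta$ and $\lambda(1-b/y)$ are absorbed as you indicate), establishes both that every element of $I_F$ lies in some coset $\iota(U_E)I_F^{n+d}\,r_{y,\lambda}$ and that distinct pairs $(y,\lambda)$ give distinct cosets. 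The present paper gives no proof of this lemma (it only cites Lemma 5.15 of the earlier article and notes it persists for wildly ramified $E/F$), and your argument is exactly the natural one for this statement; moreover, the right-coset normalization you prove is the one needed for the Mackey-formula computation in which the lemma is used.
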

We compute:
\[
\begin{aligned}
r_{y,\lambda} g r_{y,\lambda}^{-1} \iota(\unife)^{-1} = (r_{y,\lambda} u r_{y,\lambda}^{-1}) r_{y,\lambda} \iota(1 + \unife x)\iota(\unife) r_{y,\lambda}^{-1} \iota(\unife)^{-1},
\end{aligned}
\]
with 
\begin{equation}\label{eq:commutator_r_iota_r}
\begin{aligned}
r_{y,\lambda} &\iota(1 + \unife x)\iota(\unife) r_{y,\lambda}^{-1} \iota(\unife)^{-1} =\\ &\matzz{y^{-1}(1 + \Delta x)(1+ \lambda \Delta + \lambda^2\uniff)}{x + (1 + \Delta x)(\uniff^{-1}\Delta + \lambda + y^{-1} \uniff^{-1}\Delta (1+ \lambda \Delta + \lambda^2\uniff)) }{\uniff(x + \lambda(1 + \Delta x))}{\Delta x + (1 + \Delta x)(y + \Delta \lambda)}
\end{aligned}
\end{equation}
and
\begin{equation}\label{eq:Du}
D_{u,y, \lambda} := r_{y,\lambda} u r_{y,\lambda}^{-1} = \matzz{1 + u_1 + \lambda u_3}{y^{-1}(u_2 + \lambda(u_1 + u_4) + \lambda^2 u_3)}{yu_3}{1 + u_4 + \lambda u_3}.
\end{equation}

We investigate the contribution of $r_{y,\lambda} g r_{y,\lambda}^{-1}$ to the trace on the induced side. Unless $r_{y,\lambda} g r_{y,\lambda}^{-1} \in \iota(E^{\times})I_F^{n+d}$, this contribution is zero, so we may assume this. 
\begin{lm}\label{sublm:conj_to_iotaE} One has 
\[ r_{y,\lambda} g r_{y,\lambda}^{-1} \in \iota(E^{\times})I_F^{n+d} \LRar 
\begin{cases} y \equiv 1 \mod \fp_F^{\frac{n+\delta}{2}} \text{ and }\lambda \equiv 0 \text{ or } \uniff^{-1}\Delta \mod \fp_F^{\frac{n-\delta}{2}} & \text{if $n \geq d$,} \\ 
y \equiv 1 \mod \fp_F^{\frac{\ell + \delta_{\ell}}{2}} \text{ and } \lambda \equiv 0 \mod \fp_F^{\frac{\ell - \delta_{\ell}}{2}} & \text{if $n < d$.}
\end{cases}
\]
\end{lm}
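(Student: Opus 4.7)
The plan is to strip away the formal parts of $g$ and reduce the membership question to a pair of explicit matrix congruences on the entries of the conjugate, which can then be analysed case by case.

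First, I would observe that since $u \in I_F^{n+d}$, $r_{y,\lambda} \in I_F$ and $I_F^{n+d}$ is normal in $I_F$, the conjugate $D_{u,y,\lambda}$ from \eqref{eq:Du} automatically lies in $I_F^{n+d}$, so the membership condition becomes $r_{y,\lambda}\,\iota(1+\unife x)\,\iota(\unife)\,r_{y,\lambda}^{-1} \in \iota(E^{\times}) I_F^{n+d}$. Next, using that $\iota(\unife)$ normalises both $I_F$ and $I_F^{n+d}$ (a direct valuation check using $d \geq 1$, since the entries $\iota(\unife)^{-1}\fI_F^{n+d}\iota(\unife)$ all stay inside $\fI_F^{n+d}$ thanks to $\Delta^2/\uniff \in \fp_F^d$) and that $\iota(E^{\times}) I_F^{n+d} \cap I_F = \iota(U_E) I_F^{n+d}$ (by comparing valuations of determinants), I would rewrite the condition as $M' \in \iota(U_E) I_F^{n+d}$, where $M' \in I_F$ is the matrix of \eqref{eq:commutator_r_iota_r}. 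Since $\iota(U_E) \subset \fM$ is cut out by the two defining linear relations $\mu_{11}+\mu_{22}+\Delta\mu_{12} = 0$ and $\mu_{21}+\uniff\mu_{12} = 0$, this unpacks into two structural congruences on the entries $m_1,\dots,m_4$ of $M'$, at precisions $\fp_F^{\lceil(n+d)/2\rceil}$ and $\fp_F^{\lfloor(n+d)/2\rfloor + 1}$ matching the Iwahori filtration.

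Substituting the explicit entries from \eqref{eq:commutator_r_iota_r} and simplifying in characteristic $2$ (using the identity $\N_{E/F}(1+\lambda\unife) = 1 + \lambda\Delta + \lambda^2\uniff$ and $\uniff + \Delta^2 \equiv \uniff \pmod{\fp_F^{d+1}}$), I expect the second congruence to reduce cleanly to
\[
y \equiv \N_{E/F}(1+\lambda\unife) \pmod{\fp_F^{(n+\delta)/2}},
\]
after which plugging this value into the first congruence produces, up to correction terms, a condition of the shape $\lambda(\lambda + \uniff^{-1}\Delta) \equiv 0$ modulo an appropriate power of $\fp_F$. The main obstacle will be the careful tracking of these correction terms (arising from the deviation $z := y - \N_{E/F}(1+\lambda\unife)$ being fed back into the first congruence): their $F$-valuations change character at the threshold $n = d$, and this is exactly what will split the two cases of the lemma.

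Finally, I would analyse the two cases. For $n \geq d$ the correction terms are dominated by the main part, and the condition factors as $\lambda(\lambda + \uniff^{-1}\Delta) \equiv 0$ at the required precision; since $\uniff^{-1}\Delta$ has valuation $(d-1)/2$, this forces $\lambda$ to be close to one of the two roots, producing the principal branch $\lambda \equiv 0$ and the Galois-twisted branch $\lambda \equiv \uniff^{-1}\Delta$ modulo $\fp_F^{(n-\delta)/2}$. In either branch, plugging back into the $y$-relation yields $y \equiv 1 \pmod{\fp_F^{(n+\delta)/2}}$ (for the twisted branch one uses $\N_{E/F}(1 + \uniff^{-1}\Delta\cdot\unife) \equiv 1 \pmod{\fp_F^d}$). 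For $n < d$, however, the correction $z^2$ is of the same order as $\uniff^{-1}\Delta^2$: the Galois-twisted branch collapses into the principal one, the condition degenerates to a single square equation, and I expect to read off the single branch $y \equiv 1 \pmod{\fp_F^{(\ell+\delta_\ell)/2}}$ together with $\lambda \equiv 0 \pmod{\fp_F^{(\ell-\delta_\ell)/2}}$.
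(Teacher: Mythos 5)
Your proposal is correct and follows essentially the same route as the paper's proof: strip off $D_{u,y,\lambda}$ using normality of $I_F^{n+d}$, compare the matrix \eqref{eq:commutator_r_iota_r} with $\iota(\caO_E)$ modulo $\fI_F^{n+d}$ (your two linear functionals cutting out $\iota(E)$ are just a repackaging of the paper's ``existence of $c_0,c_1$'' row comparison), arrive at the congruences $y \equiv \N_{E/F}(1+\lambda\unife)$ and $y^2 \equiv \N_{E/F}(1+\lambda\unife)$ at the stated precisions, and split at $n=d$ via the factorization $\uniff\lambda(\lambda+\uniff^{-1}\Delta)$. The only imprecision is your heuristic in the case $n<d$ (the deviation $z^2$ has valuation $\geq n+\delta$ and is simply no longer negligible modulo $\fp_F^{\ell}$, rather than being ``of the same order as $\uniff^{-1}\Delta^2$''), but the single-branch conclusion you read off is the correct one.
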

\begin{proof} 
By normality of $I_F^{n+d}$ in $I_F$, $r_{y,\lambda} g r_{y,\lambda}^{-1} \in \iota(E^{\times})I_F^{n+d}$ is equivalent to $r_{y,\lambda} \iota(1 + \unife x)\iota(\unife) r_{y,\lambda}^{-1} \in \iota(U_E)I_F^{n+d}$. This last is equivalent to the existence of $c_0,c_1 \in \caO_F$ with 
\[
r_{y,\lambda} \iota(1 + \unife x)\iota(\unife) r_{y,\lambda}^{-1} \iota(\unife)^{-1} \equiv \matzz{c_0 + \Delta c_1}{c_1}{\uniff c_1}{c_0} \mod I_F^{n+d} = \matzz{1 + \fp_F^{\frac{n+d+1-\delta}{2}}}{\fp_F^{\frac{n+d-1+\delta}{2}}}{\fp_F^{\frac{n+d+1+\delta}{2}}}{1+\fp_F^{\frac{n+d+1-\delta}{2}}}
\]
Utilizing \eqref{eq:commutator_r_iota_r} and comparing the lower rows, we in particular must have 
\[
\begin{aligned}
c_0 &\equiv \Delta x + (1 + \Delta x)(y + \Delta \lambda) \mod \fp_F^{\frac{n+d+1-\delta}{2}} \\
c_1 &\equiv x + \lambda(1 + \Delta x) \mod \fp_F^{\frac{n+d-1+\delta}{2}}
\end{aligned}
\]
Comparing with the upper rows and simplifying we deduce the equations
\[
\begin{aligned}
y &\equiv 1 + \lambda \Delta + \uniff\lambda^2 \mod \fp_F^{\frac{n+\delta}{2}} \\
y^2 &\equiv 1 + \lambda \Delta + \uniff\lambda^2  \mod \fp_F^{\frac{n+d+1-\delta}{2}}.
\end{aligned}
\]

Now assume $n \geq d$. Taking the sum of the two equations above we deduce that $y \equiv 1 \mod \fp_F^{\frac{n+\delta}{2}}$. Putting this into the second equation, we deduce $\Delta \lambda + \uniff\lambda^2 \equiv 0 \mod \fp_F^{\frac{n+d+1-\delta}{2}}$. Note that this implies $\lambda \equiv 0 \mod \fp_F^{\frac{d-1}{2}}$ (assuming the contrary easily leads to a contradiction). Thus we may write $\lambda = \uniff^{\frac{d-1}{2}}\lambda_0$ and the second equation is seen to be equivalent to $\lambda_0^2 + \varepsilon^{-\frac{d+1}{2}}\varepsilon_0 \lambda_0 \equiv 0 \mod \fp_F^{\frac{n-d+1-\delta}{2}}$, from which the claim follows (note that $\uniff^{-1}\Delta = \uniff^{\frac{d-1}{2}}\varepsilon^{-\frac{d-1}{2}}\varepsilon_0$). 

The case $n < d$ is done similarly. \qedhere
\end{proof}

We can find a (non-canonical) decomposition
\[
r_{y,\lambda} g r_{y,\lambda}^{-1} = D \cdot \iota(C_{x,y, \lambda}) \iota(\unife),
\]
\noindent with $D \in I_F^{n+d}$, $C_{x,y, \lambda} \in U_E$. As seen from the explicit computation in Lemma \ref{sublm:conj_to_iotaE}, we may take 
\begin{equation}\label{eq:def_of_C_n_geq_d}
C_{x,y,\lambda} := c_0 + \unife c_1
\end{equation}
with 
\[
\begin{aligned}
c_0 &:= \Delta x + (1 + \Delta x)(y + \Delta \lambda) = 1 + (1 + \Delta x)(1 + y + \Delta \lambda) \\
c_1 &:= x + \lambda(1 + \Delta x).
\end{aligned}
\]

In particular, we have $c_0 + \Delta c_1 = y(1 + \Delta x)$. We compute 
\begin{align*}
 \N_{E/F}(C_{x,y, \lambda}) &= c_0 (c_0 + \Delta c_1) + \uniff c_1^2 \\
 &\equiv 1 + \Delta x + \uniff x^2 + (1 + \Delta x)(\uniff (1 + \Delta x) \lambda^2 \\ 
 &+ (1 + \Delta x)(1 + y)^2 + \Delta x (1 + y) + \Delta\lambda (1 + \Delta x) + \Delta \lambda(1 + y)) \mod \fp_E^{2(n+d)}.
\end{align*}

We can decompose further, $D = D_{u,y, \lambda} D_{x,y, \lambda}$ with $D_{u,y, \lambda}$ as in \eqref{eq:Du} and 
\[
\begin{aligned}
D_{x,y, \lambda} &= r_{y,\lambda} \iota(1 + \unife x)\iota(\unife) r_{y,\lambda}^{-1} \iota(\unife)^{-1} \iota(C)^{-1} \\
&= \matzz{D_1\N_{E/F}(C)^{-1}}{D_2\N_{E/F}(C)^{-1}}{0}{1} 
\end{aligned}
\]
with 

\[
\begin{aligned}
\frac{D_1}{N_{E/F}(C_{x,y, \lambda})} - 1 &\equiv \frac{1+\Delta x}{(1 + \unife x)(1 + \unife \varepsilon x)}\left( \uniff(1+\Delta x) \lambda^2 + (1+\Delta x)(1+y)^2 + \Delta x (1+y) \right. \\
&+ \left. \Delta(1+\Delta x)\lambda + \Delta \lambda(1+y) \right) \mod \fp_E^{2(n+d)}. \\
\frac{D_2}{N_{E/F}(C_{x,y, \lambda})} &\equiv \frac{y^{-1}(1 + \Delta x)}{(1 + \unife x)(1 + \unife \varepsilon x)}\left( (1+y)^2 x + (1+y)^2 \lambda + (1+y)^2 \frac{\Delta}{\uniff} + \uniff x \lambda^2 + \uniff \lambda^3 + \Delta x \lambda \right. \\ 
&+ \left. \frac{\Delta}{\uniff} (1 + \Delta x) (1+ y+ \Delta \lambda) \right) \mod \fp_E^{2(n+d)}.
\end{aligned}
\]

The following lemma is immediate.

\begin{lm}\label{lm:psi_E_psi_relation}
Let $z = z_0 + \unife \varepsilon z_1 \in \fp_E^n/\fp_E^{m+1}$ with $z_0, z_1 \in F$. We have 
\[
\psi_{E,\alpha}(z) = \psi(\Delta(\alpha_1 z_0 + \alpha_0 z_1)). 
\]
\end{lm}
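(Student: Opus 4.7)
The proof is purely a direct algebraic computation, and no real obstacle is anticipated. My plan is as follows.

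First, I would unwind the definition: by construction $\psi_{E,\alpha}(z) = \psi_E(\alpha z) = \psi(\tr_{E/F}(\alpha z))$, so the task reduces to computing $\tr_{E/F}(\alpha z)$ modulo $\fp_F$ (where $\psi$ becomes trivial). The plan is to carry out this computation in the $F$-basis $\{1,\unife\}$ of $E$. Write $\alpha = \alpha_0 + \alpha_1 \unife$ with $\alpha_0,\alpha_1 \in F$ (this is the convention implicit in the stated formula).

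Next, I would expand $\alpha z = (\alpha_0 + \alpha_1 \unife)(z_0 + \unife\varepsilon z_1)$ using the three identities from Section \ref{sec:basic_notation}: $\unife\varepsilon = \tau(\unife)$, $\unife\cdot\tau(\unife) = \N_{E/F}(\unife) = \uniff$, and (thanks to $\charac F = 2$) $\tau(\unife) = \unife + \Delta$. Regrouping in the basis $\{1,\unife\}$ gives
\[
\alpha z = (\alpha_0 z_0 + \Delta\alpha_0 z_1 + \uniff\alpha_1 z_1) + (\alpha_0 z_1 + \alpha_1 z_0)\,\unife.
\]

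Finally, apply the trace termwise. The characteristic two hypothesis is the only substantive input here: it yields $\tr_{E/F}(1) = 2 = 0$, so the entire scalar part drops out, while $\tr_{E/F}(\unife) = \Delta$ survives. Hence $\tr_{E/F}(\alpha z) = \Delta(\alpha_0 z_1 + \alpha_1 z_0)$, and composing with $\psi$ gives the claimed formula. No valuation estimates are required since this is an exact equality, not an equality modulo $\fp_F$, and the ``hardest'' step is simply keeping track of which basis each of $\alpha$ and $z$ is expressed in.
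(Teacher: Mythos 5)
Your computation is correct and is exactly the direct verification the paper has in mind (the paper offers no proof, declaring the lemma immediate): writing $\alpha=\alpha_0+\alpha_1\unife$, using $\unife\varepsilon=\tau(\unife)=\unife+\Delta$, $\unife\tau(\unife)=\uniff$, and $\tr_{E/F}(1)=0$, $\tr_{E/F}(\unife)=\Delta$ in characteristic $2$ gives $\tr_{E/F}(\alpha z)=\Delta(\alpha_1 z_0+\alpha_0 z_1)$, hence the stated identity. The only point you gloss over is that $z$ and $\alpha$ live in subquotients, so one computes with representatives and invokes the well-definedness already built into Lemma \ref{lm:dual_spaces_of_characters}; this is harmless.
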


Using Lemma \ref{lm:psi_E_psi_relation}, we compute the contribution of $D_{x,y,\lambda}$,
\[
\begin{aligned}
\psi_{\fM,\iota(\alpha)}(D_{x,y, \lambda}) &= \psi_{\fM}(\iota(\alpha)(D_{x,y, \lambda} - 1)) \\
&= \psi_{\fM}\left(\matzz{\alpha_0 + \Delta \alpha_1}{\alpha_1}{\uniff\alpha_1}{\alpha_0} \matzz{N_{E/F}(C)^{-1}D_1 - 1}{N_{E/F}(C)^{-1}D_2}{0}{0}\right) \\
&= \psi\left(\alpha_0\left(\frac{D_1}{N_{E/F}(C_{x,y, \lambda})} - 1\right) + \alpha_1\left(\Delta\left(\frac{D_1}{N_{E/F}(C_{x,y, \lambda})} - 1\right) + \uniff \frac{D_2}{N_{E/F}(C_{x,y, \lambda})}\right)\right) \\
&= \psi_{E,\alpha}\left(\left(\frac{D_1}{N_{E/F}(C_{x,y, \lambda})} - 1\right) + \frac{\uniff}{\Delta} \frac{D_2}{N_{E/F}(C_{x,y, \lambda})} + \frac{\unife \varepsilon}{\Delta}\left(\frac{D_1}{N_{E/F}(C_{x,y, \lambda})} - 1\right)\right) \\
&= \psi_{E,\alpha}\left(\underbrace{\frac{\unife}{\Delta}\left(\frac{D_1}{N_{E/F}(C_{x,y, \lambda})} - 1\right) + \frac{\uniff}{\Delta} \frac{D_2}{N_{E/F}(C_{x,y, \lambda})}}_{=: z_{x,y, \lambda}} \right),
\end{aligned}
\]
and analogously the contribution of $D_{u,y,\lambda}$,
\[
\psi_{\fM,\iota(\alpha)}(D_{u,y, \lambda}) = \psi_{E,\alpha}\left( \underbrace{u_1 + \lambda u_3 + \Delta^{-1}yu_3  + \frac{\uniff}{\Delta}y^{-1}(u_2 + \lambda(u_1 + u_4) + \lambda^2 u_3) + \frac{\unife \varepsilon}{\Delta}(u_1 + u_4)}_{=: z_{u,y, \lambda}} \right).
\]

Thus we compute
\begin{align*}
\Lambda_{\theta,\psi,\alpha}(r_{y,\lambda} g r_{y,\lambda}^{-1}) 
&= \Lambda(D_{u,y, \lambda} D_{x,y, \lambda} \iota(C_{x,y, \lambda})\iota(\unife)) \\ 
&= \psi_{\fM,\iota(\alpha)}(D_{u,y, \lambda}) \psi_{\fM,\iota(\alpha)}(D_{x,y, \lambda}) \theta(C_{x,y, \lambda}) \theta(\unife) \\
&= \psi_{E,\alpha}(z_{x,y, \lambda} + z_{u,y, \lambda})\theta(C_{x,y, \lambda}) \theta(\unife) \\
&= (\theta,\psi_{E,\alpha})(C_{x,y, \lambda},z_{x,y, \lambda} + z_{u,y, \lambda})\theta(\unife) \\
&= \tilde{\theta}(\beta(C_{x,y, \lambda}, z_{x,y, \lambda} + z_{u,y, \lambda}))\theta(\unife),
\end{align*}

\noindent Here, $\beta$ is the isomorphism from Proposition \ref{prop:beta}. The fourth equation is by definition of the character $(\theta,\psi_{E,\alpha})$ of $\Pi$, and the fifth equation is by the assumption in Proposition \ref{prop:hard_traces}. Thus to show Proposition \ref{prop:hard_traces} it suffices to show that there is a bijection of sets,
\begin{equation}\label{eq:gamma_bijection}
\gamma \colon \left\{ y,\lambda \colon r_{y,\lambda} g r_{y,\lambda}^{-1} \in \iota(E^{\times}) I_F^{n+d}  \right\} \stackrel{\sim}{\longrar} A_g^{\prime},
\end{equation}
with $A_g^{\prime}$ as in \eqref{eq:A_g_prime}, such that
\begin{equation}\label{eq:beta_main_equality}
\beta(C_{x,y, \lambda}, z_{x,y, \lambda} + z_{u,y, \lambda}) = i(t_{\gamma(y,\lambda)}, r_{\gamma(y,\lambda)}).
\end{equation}
where $t_a,r_a$ are as in Section \ref{sec:trace_on_geom_side}. Using Lemma \ref{sublm:conj_to_iotaE} we may write
\begin{align*}
y &= \begin{cases}
      1 + \unife^n y_1 & \text{if $n \geq d$ and $y \equiv 1 \mod \fp_E^n$,} \\
      1 + \unife^{\ell} y_1 & \text{if $n < d$,}
     \end{cases} \\
\lambda &= \begin{cases}
           \unife^{n - 1} \varepsilon^{-1} \lambda_1 & \text{if $n \geq d$,} \\
           \unife^{\ell - 1} \varepsilon^{-1} \lambda_1 & \text{if $n < d$,}.
           \end{cases}
\end{align*}
for appropriate $y_1$ and $\lambda_1$. Now the appropriate bijection $\gamma$ is given as follows.
\begin{itemize}
\item Let $n \geq d$. We set $\gamma(y,\lambda) := \unife(1 + \unife^n (y_1 + \lambda_1))$. Thus $b$ corresponds to $y_1 + \lambda_1$, $\unife^{\delta}A$ corresponds to $y_1$, and $\unife^{1-\delta}B$ corresponds to $\lambda_1$. Analogously $\gamma$ can be defined for $y \equiv \varepsilon \mod \fp_E^n$.
\item Let $n < d$. We set $\gamma(y,\lambda) := 1 + \unife^{\ell}(y_1 + \lambda_1)$. Thus $b$ corresponds to $y_1 + \lambda_1$, $\unife^{\delta_{\ell}}A$ corresponds to $y_1$, and $\unife^{1-\delta_{\ell}}B$ corresponds to $\lambda_1$. 
\end{itemize}

\noindent Now a completely straightforward (but quite lengthy) computation shows that \eqref{eq:beta_main_equality} indeed holds with respect to this $\gamma$. This finishes the proof of Proposition \ref{prop:hard_traces}.

\subsection{Discussion of the relation to the Galois side}\label{sec:rectifier}

It is more than natural to ask, what the image of $R_{\tilde{\theta}}$ under the Langlands-correspondence is. Unfortunately, it is not clear how to characterize it in terms of $\tilde{\theta}$ and the geometry of $X_{\dot{w}}^m(1)$. 

At least in the tame case, for any character $\chi$ of $E^{\times}$, there is a character $\Delta_{\chi}$, the \emph{rectifier} of $\chi$, which controls the difference between the local Langlands correspondence and the two natural parametrizations 
\[
BH_{\chi} \mapsfrom \chi \mapsto \Ind\nolimits_{E/F} \chi 
\]
of the Galois and of the automorphic side by characters  various degree $2$-extensions $E/F$, where $BH_{\chi}$ denotes the representation attached to $\chi$ via theory of types, and $\Ind_{E/F} \chi$ denotes the induction to the Weil group of $F$ from the Weil group of $E$ of the character associated to $\chi$ by the local reciprocity isomorphism. The rectifier $\Delta_{\chi}$ is then uniquely determined by 
\[
{\rm LLC} \colon {\rm BH}_{\Delta_{\chi} \chi} \leftrightarrow \Ind\nolimits_{E/F} \chi
\]
under the local Langlands correspondence. For $\GL_2$ and unramified $E/F$, the rectifier is trivial on the units $U_E$, and equal to $(-1)$ on an uniformizer (in particular, it does not depend on $\chi$). This $(-1)$ shows up in the cohomology of the Deligne-Lusztig constructions attached to unramified tori as a $q$-power multiple of the Frobenius eigenvalue in the cohomology \cite{BoyarchenkoW_16,Chan_16_semiinf,Ivanov_15_unram}. But in the totally tamely ramified case, it is quite involved (cf. \cite{BushnellH_06} \S34.4). In \cite{Weinstein_09}, Weinstein had the very nice idea to recover the rectifier in the geometry by some twists of the action on the geometric structures (see \cite{Weinstein_09} Section 5). It is not clear how one can achieve a similar twist of the geometric actions for the extended affine Deligne-Lusztig varieties considered in \cite{Ivanov_15_ram} and here. 


\subsection{Remark on the tame case} \label{sec:remark_on_tame_case}

First we remark that the parametrization \eqref{eq:explicit_Cvm_param} used in the proof of Proposition \ref{prop:eADLV_higher_level} is better adapted (than parametrization (3.2) in \cite{Ivanov_15_ram}) also in the case of a totally tamely ramified torus (i.e., if $\charac F > 2$) -- the formulae get considerably easier. Moreover, the proof of \cite{Ivanov_15_ram} Theorem 4.2 can be simplified -- \cite{Ivanov_15_ram} Theorem 4.18 is in fact not necessary. Indeed, to prove it, it is (exactly as in the proof of Theorem \ref{thm:relation_ADLV_BH} in the present article) sufficient to just compare the traces of the elements lying in $\iota(E^{\times})$, which have $E$-valuation $1$. Nevertheless, \cite{Ivanov_15_ram} Theorem 4.18 is also interesting in its own right, as it describes completely the restriction of the cuspidal inducing datum $\Xi_{\chi}$ to the torus $\iota(E^{\times})$.

\bibliography{bib_ADLV}{}
\bibliographystyle{alpha}

\end{document}